\title{The character of the total power operation}
\author{Tobias Barthel}
\address{Max Planck Institute for Mathematics, Bonn, Germany}
\email{tbarthel@mpim-bonn.mpg.de}
\urladdr{http://people.mpim-bonn.mpg.de/tbarthel/}
\author{Nathaniel Stapleton}
\address{Max Planck Institute for Mathematics, Bonn, Germany}
\email{nstapleton@mpim-bonn.mpg.de}
\urladdr{http://guests.mpim-bonn.mpg.de/nstapleton/}
\newtheorem{thm}[subsection]{Theorem}
\newtheorem{prop}[subsection]{Proposition}
\newtheorem{cor}[subsection]{Corollary}
\newtheorem{lemma}[subsection]{Lemma}
\newtheorem*{thmA}{Theorem A}
\newtheorem*{thmB}{Theorem B}
\newtheorem*{thmC}{Theorem C}
\theoremstyle{definition}
\newtheorem{definition}[subsection]{Definition}
\newtheorem{example}[subsection]{Example}
\newtheorem{remark}[subsection]{Remark}
\newcommand{\powser}[1]{[\![#1]\!]}
\newcommand{\pdiv}{$p$-divisible }
\newcommand{\G}{\mathbb{G}}
\newcommand{\F}{\mathbb{F}}
\renewcommand{\Q}{\mathbb{Q}}
\newcommand{\Po}{\mathbb{P}}
\renewcommand{\Z}{\mathbb{Z}}
\newcommand{\N}{\mathbb{N}}
\newcommand{\QZ}[1]{(\Q_p/\Z_p)^{#1}}
\renewcommand{\R}{\mathbb{R}}
\newcommand{\al}{\alpha}
\newcommand{\Lk}{\Lambda_k}
\newcommand{\lra}[1]{\overset{#1}{\longrightarrow}}
\newcommand{\Prod}[1]{\underset{#1}{\prod}}
\newcommand{\Oplus}[1]{\underset{#1}{\bigoplus}}
\newcommand{\Coprod}[1]{\underset{#1}{\coprod}}
\newcommand{\Colim}[1]{\underset{#1}{\colim}}
\newcommand{\E}{E_{n}}
\renewcommand{\D}{D_{\infty}}
\newcommand{\trans}{\text{trans}}
\newcommand{\pk}{\underline{p^k}}
\newcommand{\uG}{\underline{G}}
\newcommand{\um}{\underline{m}}
\newcommand{\mund}{\underline{m}}
\DeclareMathOperator{\Aut}{Aut}
\DeclareMathOperator{\im}{im}
\DeclareMathOperator{\Hom}{Hom}
\DeclareMathOperator{\colim}{colim}
\DeclareMathOperator{\Iso}{Iso}
\DeclareMathOperator{\End}{End}
\DeclareMathOperator{\Isog}{Isog}
\DeclareMathOperator{\Spf}{Spf}
\DeclareMathOperator{\Id}{Id}
\DeclareMathOperator{\Res}{Res}
\DeclareMathOperator{\Sub}{Sub}
\DeclareMathOperator{\Level}{Level}
\DeclareMathOperator{\Def}{Def}
\DeclareMathOperator{\Sum}{Sum}
\DeclareMathOperator{\Tr}{Tr}
\DeclareMathOperator{\GL}{GL}
\DeclareMathOperator{\lat}{\mathbb{L}}
\DeclareMathOperator{\qz}{\mathbb{T}}
\DeclareMathOperator{\id}{id}
\newcommand{\upperRomannumeral}[1]{\uppercase\expandafter{\romannumeral#1}}
\newcommand{\Fam}[1]{\mathcal{F}({#1})}
\begin{document}

\begin{abstract}    % type your abstract below
In this paper we compute the total power operation for the Morava $E$-theory of any finite group up to torsion. Our formula is stated in terms of the $\GL_n(\Q_p)$-action on the Drinfeld ring of full level structures on the formal group associated to $E$-theory. It can be specialized to give explicit descriptions of many classical operations. Moreover, we show that the character map of Hopkins, Kuhn, and Ravenel from $E$-theory to $\GL_n(\Z_p)$-invariant generalized class functions is a natural transformation of global power functors on finite groups.  
\end{abstract}
%Moreover, we prove that the character map of HKR from E-theory to the GL_n(Z_p)-fixed points of generalized class functions is a natural transformation of global power functors on finite groups.
%Point out that we produce power operations on the rationalization
\maketitle

%%%%%%%%%%%%%%%%%%%%   Start of main body of article

\section{Introduction}

Power operations and their variants are ubiquitous throughout homotopy theory. The Steenrod operations on mod $p$ cohomology and the Adams operations on topological $K$-theory are familiar examples. These operations have proven extremely useful; for instance, the Adams operations were used to give a short and elegant proof of the Hopf invariant one problem~\cite{adamsatiyahhopf}. More generally, many cohomology theories are equipped with this extra structure which is a consequence of an $E_{\infty}$-ring structure on the representing spectrum.

%The mod $p$ cohomology of a space is not just an $\F_p$-vector space, but in fact a module over the Steenrod algebra. Similarly, the value of many cohomology theories $E^*$ on a space is equipped with a rich structure, coming from an $E_{\infty}$-structure on the representing spectrum $E$. An example of such a spectrum is topological $K$-theory, and operations derived from its power operations can be used to give a very short and elegant proof of the Hopf invariant one problem~\cite{adamsatiyahhopf}.

It is a theorem of Goerss, Hopkins, and Miller~\cite{structuredmoravae} that Morava $E$-theory admits a unique $E_{\infty}$-ring structure. In the homotopy category, the $E_{\infty}$-ring structure manifests itself as an $H_{\infty}$-ring structure which is equivalent to the data of a collection of multiplicative cohomology operations known as total power operations. The study of these operations began in earnest in \cite{isogenies} in which a connection is established between the total power operations and isogenies of the formal group associated to $\E$. In this paper, we give a formula for the total power operations applied to a finite group in terms of the action of $\GL_n(\Q_p)$ on the Drinfeld ring of full level structures on the formal group associated to $\E$. This is the same action that appears in the local Langlands correspondence~\cite{carayolltt}.

%This result initiated a large body of work on $E$-theory which proved that it is an object of central importance in chromatic homotopy theory with various connections to other areas of mathematics. 

\subsection{Motivation and background}

Fix a prime $p$, a natural number $n$, and a height $n$ formal group law $\Gamma$ over $\kappa$, a perfect field of characteristic $p$. This data determines the $E_{\infty}$-ring spectrum $\E$ known as Morava $E$-theory. It has been studied extensively because of its close relationship to several areas of mathematics: Work of Devinatz and Hopkins~\cite{devinatzhopkinsfixedpoints} as well as Rognes~\cite{rognesgalois} demonstrates that $\E$ is a Galois extension of the $K(n)$-local sphere, a fundamental object of the stable homotopy category. It is closely connected to algebraic geometry as the coefficient ring $\E^0$ carries the universal deformation $\G$ of $\Gamma$. Morava $E$-theory is also related to representation theory, being a generalization of $p$-adic $K$-theory that admits a well-behaved character theory. In this paper we will take advantage of the last two relationships to study the total power operations determined by the unique $E_{\infty}$-ring structure on $\E$.

Let $X$ be a topological space and let $E\Sigma_m \times_{\Sigma_m} X^m$ be the Borel construction for the canonical $\Sigma_m$-action on $X^m$. The total power operations are natural multiplicative non-additive maps
\[
\Po_m \colon \E^0(X) \lra{} \E^0(E\Sigma_{m} \times_{\Sigma_{m}} X^{^m}),
\]
defined for all $m>0$. These maps are quite mysterious and notoriously difficult to compute, see~\cite{rezkpowerops}. At height $2$, $\Po_m$ has been explicitly determined for $X$ a point when $p=2$ and $m = 2$ and when $p=3$ and $m=3$, see~\cite{rezkpowercalc, zhupo}. Above height $2$ there have been no explicit computations. Many of the most important operations on $\E$, including the Adams operations, Hecke operations, and the logarithm~\cite{logarithmic}, can be built out of the total power operations using various simplifications of $\Po_m$. 

A useful simplification of $\Po_m$ is obtained as the restriction of the total power operation along the diagonal $X \lra{} X^{m}$. This produces a map
\[
P_m\colon\E^0(X) \lra{} \E^0(B\Sigma_{m} \times X) \cong \E^0(B\Sigma_{m}) \otimes_{\E^0} \E^0(X),
\]
the isomorphism being a consequence of the freeness of $\E^0(B\Sigma_{m})$ as a module over $\E^0$ \cite[Proposition 3.6]{etheorysym}. Let $I \subset \E^0(B\Sigma_{p^k})$ be the image of the transfer along the inclusion $\Sigma_{p^{k-1}}^p \subset \Sigma_{p^k}$, then the quotient
\[
P_{p^k}/I\colon\E^0(X) \lra{} \E^0(B\Sigma_{p^k})/I \otimes_{\E^0} \E^0(X)
\]
is a ring map \cite[Chapter VIII, Proposition 1.4.(iv)]{bmms} \cite[Lemma 8.11]{etheorysym}. Thus it is reasonable to hope that $P_{p^k}/I$ may be attacked using algebraic geometry associated to $\G$. For $X = BA$, where $A$ is a finite abelian group, this was accomplished by Ando~\cite{isogenies} and Ando, Hopkins, and Strickland~\cite{ahs}. In constrast to these approaches, we apply a form of character theory available for $\E$ in order to simplify $\Po_m$.

% Instead of simplifying $\Po_m$ by restriction along the diagonal or taking a quotient of the codomain, we simplify $\Po_m$ by using a form of character theory for $\E$.

Generalized character theory for Morava $E$-theory was constructed by Hopkins, Kuhn, and Ravenel in \cite{hkr} building on work of Adams on $p$-adic $K$-theory in \cite{Adams-Maps2} and extended by the second author in \cite{tgcm}. An introduction to the subject is available in~\cite{hkrsintroduction}. As $E$-theory is constructed homotopy theoretically, it is surprising that $\E^0(BG)$ behaves so much like a (completed) ``higher" representation ring. 

%Unlike $K$-theory, no geometric interpretation of $\E$ has been discovered yet. 

Let $\lat = \Z_{p}^n$ and $\qz = \lat^*$, the $p$-typical Pontryagin dual, so that there is a non-canonical isomorphism $\qz \cong \QZ{n}$. Hopkins, Kuhn, and Ravenel construct a $p^{-1}\E^0$-algebra $C_0$ that corepresents isomorphisms of \pdiv groups between $\qz$ and $\G$:
\[
\hom(C_0, R) \cong \Iso(R \otimes \qz, R \otimes \G).
\]
Thus there is an obvious action of $\Aut(\qz)$, the automorphisms of $\qz$, on $C_0$. 

Let $Cl_n(G,C_0)$ be the ring of generalized class functions on $G$ taking values in $C_0$. Concretely, $Cl_n(G,C_0)$ is the set of $C_0$-valued functions on $\hom(\lat, G)_{/\sim}$, the quotient of the set $\hom(\lat,G)$ by the conjugation action of $G$. Note that $\Aut(\qz)$ is contravariantly isomorphic to $GL_n(\Z_p)$ by taking the Pontryagin dual. There is an $\Aut(\qz)$-action on $Cl_n(G,C_0)$ given by combining the action of $\Aut(\qz)$ on $\hom(\lat,G)_{/\sim}$ by precomposition with the Pontryagin dual and the action of $\Aut(\qz)$ on $C_0$ \cite[Section 6.3]{hkr}.

In \cite[Section 6]{hkr}, Hopkins, Kuhn, and Ravenel construct a map of $\E^0$-algebras called the generalized character map
\[
\chi\colon\E^0(BG) \lra{} Cl_n(G,C_0).
\]
Theorem C in \cite{hkr} proves that the induced map
\[
C_0 \otimes \chi \colon C_0 \otimes_{\E^0} \E^0(BG) \lra{\cong} Cl_n(G,C_0)
\]
is an isomorphism for any finite group. Theorem C further states that the isomorphism $C_0 \otimes \chi$ is $\Aut(\qz)$-equivariant and restricts to an isomorphism on fixed points
\[
p^{-1}\E^0(BG) \lra{\cong} Cl_n(G, C_0)^{\Aut(\qz)}.
\]

\begin{comment}
There is a generalized character map
\[
\chi\colon\E^0(BG) \lra{} Cl_n(G,C_0),
\]
where $Cl_n(G,C_0)$ is the ring of generalized class functions on $G$.  Theorem C in \cite{hkr} proves that the induced map
\[
C_0 \otimes \chi \colon C_0 \otimes_{\E^0} \E^0(BG) \lra{\cong} Cl_n(G,C_0)
\]
is an isomorphism for any finite group. There is also an $\Aut(\qz)$-action on $Cl_n(G,C_0)$ [cite]. Theorem C in \cite{hkr} further states that the isomorphism $C_0 \otimes \chi$ is $\Aut(\qz)$-equivariant and restricts to an isomorphism on fixed points
\[
p^{-1}\E^0(BG) \lra{\cong} Cl_n(G, C_0)^{\Aut(\qz)}.
\]
\end{comment}
%by combining the action on $C_0$ with the action of $\Aut(\qz)$ on $\hom(\lat, G)$ by precomposition with the Pontryagin dual

This brings us to the motivating question for this paper. Does there exist a multiplicative natural transformation on generalized class functions that is compatible with the total power operation for $\E$ through the generalized character maps of \cite{hkr}:
\[
\xymatrix{\E^0(BG) \ar[r]^-{\Po_m} \ar[d]_{\chi} & \E^0(BG\wr \Sigma_{m}) \ar[d]^{\chi} \\ Cl_n(G,C_0) \ar@{-->}[r]^-{\exists ?} & Cl_n(G \wr \Sigma_{m},C_0).}
\]
There is no formal reason why this might be possible,\footnote{It is not possible to base change $\Po_m$ to $C_0$ because $\Po_m$ is not a ring map. Further, simplifications of $\Po_m$ such as $P_{p^k}/I$, which are ring maps, are not $\E^0$-algebra maps.} and yet the construction of such a natural transformation is one of the goals of this paper. 

\subsection{Main results} In fact, we construct an infinite family of multiplicative natural transformations that answer the question. In order to state the results precisely we need to establish some notation. 

Let $\Isog(\qz)$ be the monoid of endoisogenies of $\qz$, i.e., the monoid of endomorphisms with finite kernel, and let $\Sub(\qz)$ be the set of finite subgroups in $\qz$. There is an $\Aut(\qz)$-principal bundle
\[
\Isog(\qz) \twoheadrightarrow \Sub(\qz)
\]
given by taking an isogeny to its kernel. For each section $\phi$ of this principal bundle we construct a multiplicative natural transformation
\[
\Po^{\phi}_m \colon Cl_n(-,C_0) \Rightarrow Cl_n(-\wr \Sigma_m,C_0)
\]
that is compatible with the total power operation $\Po_m$ through the character map $\chi$. We refer the reader to the paragraphs leading up to \Cref{def:rationalpo} for the explicit formula for $\Po^{\phi}_m$. 

\begin{thmA}[\Cref{clmainthm}]
For all $n,m \geq 0$, let $\Po_m$  be the total power operation for Morava $\E$, let $\phi$ be a section of the principal bundle above, and let $\chi$ be the generalized character map. There is a commutative diagram
\[
\xymatrix{\E^0(BG) \ar[r]^-{\Po_m} \ar[d]_{\chi} & \E^0(BG \wr \Sigma_{m}) \ar[d]^{\chi} \\ Cl_n(G,C_0) \ar[r]^-{\Po^{\phi}_{m}} & Cl_n(G\wr \Sigma_{m},C_0)}
\]
natural in $G$.
\begin{comment}
Let $\Po_m\colon \E^0(BG) \lra{} \E^0(BG\wr \Sigma_{m})$ be the total power operation for Morava $\E$ applied to $BG$, let $\phi \in \Gamma(\Sub(\qz), \Isog(\qz))$, and let $\chi\colon\E^0(BG) \lra{} Cl_n(G,C_0)$ be the character map.  there is a commutative diagram
\[
\xymatrix{\E^0(BG) \ar[r]^-{\Po_m} \ar[d]_{\chi} & \E^0(BG \wr \Sigma_{m}) \ar[d]^{\chi} \\ Cl_n(G,C_0) \ar[r]^-{\Po^{\phi}_{m}} & Cl_n(G\wr \Sigma_{m},C_0)}
\]
natural in $G$.
\end{comment}
\end{thmA}

As part of \cref{rationalized}, we further prove that each of these ``total power operations" $\Po^{\phi}_m$ may be restricted to the $\Aut(\qz)$-fixed points. In \cref{independent}, we show that the result is independent of $\phi$ and we call it the ``rational total power operation"
\[
\Po^{\Q}_{m}\colon p^{-1}\E^0(BG) \lra{} p^{-1}\E^0(BG \wr \Sigma_m).
\]

\begin{thmB}[\Cref{rationalized}]
With the notation of Theorem A, there is a commutative diagram
\[\xymatrix{\E^0(BG) \ar[r]^-{\mathbb{P}_m} \ar[d]_{\chi} & \E^0(BG \wr \Sigma_{m}) \ar[d]^{\chi} \\
p^{-1}\E^0(BG) \ar[r]_-{\mathbb{P}^{\Q}_{m}} & p^{-1}\E^0(BG \wr \Sigma_{m}).}\]
\end{thmB}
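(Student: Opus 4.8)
The plan is to reduce Theorem B to two properties of the natural transformations $\Po^{\phi}_m$ of \myref{def:rationalpo} and then feed them into the Hopkins--Kuhn--Ravenel isomorphism. The two properties I would establish are: \textbf{(I)} for every section $\phi$ the transformation $\Po^{\phi}_m$ carries $\Aut(\qz)$-invariant class functions to $\Aut(\qz)$-invariant ones, so that it restricts to a natural transformation $Cl_n(-,C_0)^{\Aut(\qz)} \Rightarrow Cl_n(-\wr\Sigma_m,C_0)^{\Aut(\qz)}$; and \textbf{(II)} for $f$ invariant the value $\Po^{\phi}_m(f)$ is independent of the choice of $\phi$. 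Granting these, I transport the resulting transformation across the isomorphism $p^{-1}\chi\colon p^{-1}\E^0(BG) \lra{\cong} Cl_n(G,C_0)^{\Aut(\qz)}$ of \cite{hkr} to define $p^{-1}\Po_m$, and the commutativity of the square in Theorem B is then formal.

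For (I), I would first make explicit how $\Aut(\qz)$ acts through the formula defining $\Po^{\phi}_m$. An automorphism $\gamma$ acts on $\hom(\lat,G\wr\Sigma_m)$, on $C_0$, on $\Sub(\qz)$ by $H\mapsto\gamma H$, and on the total space $\Isog(\qz)$, so a section $\phi$ may be translated to a section $\gamma\cdot\phi$ by conjugation (one checks that conjugation by $\gamma$ carries an isogeny with kernel $\gamma^{-1}H$ to one with kernel $H$, so $\gamma\cdot\phi$ is again a section). Unwinding \myref{def:rationalpo}---in particular the way an $\alpha\colon\lat\to G\wr\Sigma_m$ is decomposed along the orbits of its composite to $\Sigma_m$ into finite subgroups $H_j\subseteq\qz$, maps $\beta_j$ to $G$, and isogeny-induced maps on the Drinfeld ring of level structures---one verifies that the formula is natural with respect to all of these actions, that is, $\gamma\circ\Po^{\phi}_m = \Po^{\gamma\cdot\phi}_m\circ\gamma$. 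Given (II), for $f$ with $\gamma\cdot f=f$ this yields $\gamma\cdot\Po^{\phi}_m(f) = \Po^{\gamma\cdot\phi}_m(\gamma\cdot f) = \Po^{\gamma\cdot\phi}_m(f) = \Po^{\phi}_m(f)$, which is (I).

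The hard part will be (II). Two sections $\phi,\phi'$ of $\Isog(\qz)\twoheadrightarrow\Sub(\qz)$ differ by a function $\sigma\colon\Sub(\qz)\to\Aut(\qz)$ with $\phi'(H)=\phi(H)\circ\sigma(H)$, and in the formula for $\Po^{\phi}_m(f)$ the isogenies enter only through the maps they induce on level structures; passing from $\phi$ to $\phi'$ therefore precomposes each such map with the $\Aut(\qz)$-action of $\sigma(H_j)$ on $C_0$. I would show that once $f$ is $\Aut(\qz)$-invariant this discrepancy disappears, by re-absorbing the $\sigma(H_j)$-twists using the $\Aut(\qz)$-equivariance of the Hopkins--Kuhn--Ravenel construction together with the invariance of $f$, after suitably reindexing the product over orbits in \myref{def:rationalpo}. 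I expect essentially all of the real work to live here, and to require the explicit shape of the transfer formula; it is also the place where one sees why the independence statement fails before restricting to fixed points.

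To finish, write $r\colon\E^0(BG)\to p^{-1}\E^0(BG)$ for the rationalization and $p^{-1}\chi$ for the induced extension of $\chi$ (legitimate since $C_0$, hence $Cl_n(G,C_0)$, is a $p^{-1}\E^0$-algebra), which by \cite{hkr} is an isomorphism onto $Cl_n(G,C_0)^{\Aut(\qz)}$, hence injective with $p^{-1}\chi\circ r=\chi$. Define $p^{-1}\Po_m$ by the equation $p^{-1}\chi\circ p^{-1}\Po_m = \Po^{\phi}_m\circ p^{-1}\chi$; by (I) and (II) this determines $p^{-1}\Po_m$ uniquely and independently of $\phi$. Precomposing with $r$, using $p^{-1}\chi\circ r=\chi$ and Theorem A, gives $p^{-1}\chi\circ p^{-1}\Po_m\circ r = \Po^{\phi}_m\circ\chi = \chi\circ\Po_m = p^{-1}\chi\circ r\circ\Po_m$, and cancelling the injection $p^{-1}\chi$ for $G\wr\Sigma_m$ yields $p^{-1}\Po_m\circ r = r\circ\Po_m$, which is exactly the square of Theorem B; naturality in $G$ is inherited from Theorem A and \cite{hkr}.
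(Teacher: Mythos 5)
Your overall reduction is the same as the paper's: combine Theorem A with the Hopkins--Kuhn--Ravenel isomorphism $p^{-1}\E^0(BG)\cong Cl_n(G,C_0)^{\Aut(\qz)}$, show that $\Po^{\phi}_m$ restricts to $\Aut(\qz)$-fixed points, and then deduce the square formally. Where you differ is in how the restriction to fixed points is obtained. The paper proves your (I) directly: fix $\sigma\in\Aut(\qz)$, observe that $\phi_{H_i}$ and $\phi_{\sigma H_i}\sigma$ have the same kernel and hence differ by a unique correction automorphism $\gamma_i\in\Aut(\qz)$, and absorb the $\gamma_i$ into the invariant $f$ via the dual identity $\sigma^{*}_{|_{\lat_{\sigma H_i}}}\psi^*_{\sigma H_i}=\psi^*_{H_i}\gamma_i^*$. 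Your (II) is then a separate corollary (\myref{independent}), whose ``second proof'' in the paper is exactly your argument that two sections differ fiberwise by automorphisms which invariance of $f$ kills. You instead make (II) primary and deduce (I) from it together with an equivariance identity relating $\Po^{\phi}_m$ and $\Po^{\gamma\cdot\phi}_m$. That reorganization is legitimate (and the paper records the related identity $\Po^{\phi}_m(f^{\gamma})=\Po^{\gamma\phi}_m(f)$ at the end of Section 5), but note that the identity you need involves the $\Aut(\qz)$-action on the \emph{target} $Cl_n(G\wr\Sigma_m,C_0)$ via \myref{sumsaction}, and verifying it is precisely the diagram chase with the $\gamma_i$ that constitutes the paper's direct proof. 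So the ``real work'' is not confined to (II) as you predict; an equal share of it hides inside ``one verifies'' in your (I).

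One small but genuine error: you write $\phi'(H)=\phi(H)\circ\sigma(H)$ for the difference of two sections. Precomposition with $\sigma(H)$ changes the kernel to $\sigma(H)^{-1}H$; two isogenies with the same kernel differ by \emph{post}composition with an automorphism, $\phi'_H=\sigma(H)\circ\phi_H$, which is the left $\Aut(\qz)$-action on $\Gamma(\Sub(\qz),\Isog(\qz))$ used in the paper. With that correction, your absorption argument for (II) goes through exactly as in the paper's second proof of \myref{independent}, and your concluding formal manipulation with $p^{-1}\chi$ is sound.
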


We note that the rationalization $p^{-1}\E^0(BG)$ retains much of the information contained in $\E^0(BG)$, so our result approximates the total power operation closely. In fact,  $\E^0(BG)$ is finitely generated and free for many finite groups $G$. These are the so-called good groups\cite{hkr,bscentralizers,schuster32}. Finally, we show that $\mathbb{P}^{\Q}_{m}$ deserves to be called the rational total power operation by proving it is a global power functor in the sense of \cite[Section 4]{globalgreen}.

\begin{thmC}[\Cref{ratpoglobalpower}, \Cref{charglobalpower}]
The rational total power operation $\Po^{\Q}_{m}$ is a global power functor in such a way that the character map 
\[
\chi\colon \E^0(B-) \lra{} p^{-1}\E^0(B-)
\]
is a map of global power functors. 
\end{thmC}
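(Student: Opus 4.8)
The plan is to first recall the relevant definitions from \cite{globalgreen}: a \emph{global power functor} is a global Green functor $R$ on finite groups --- a commutative-ring-valued Mackey functor, equipped with restrictions $\Res$ and transfers $\Tr$ subject to the double-coset and Frobenius-reciprocity relations --- together with power operations $P_m\colon R(G)\to R(G\wr\Sigma_m)$ that are unital and multiplicative, normalized by $P_0=1$ and $P_1=\id$, and that satisfy an additivity relation presenting $P_m(x+y)$ as a transfer assembled from the external products $P_i(x)\,P_j(y)$ with $i+j=m$, a composition relation comparing $P_m\circ P_k$ with the restriction of $P_{mk}$ along $\Sigma_k\wr\Sigma_m\hookrightarrow\Sigma_{mk}$, and compatibility with $\Res$ and $\Tr$. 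That $(\E^0(B-),\Po_m)$ is a global power functor is a consequence of the $E_\infty$-structure on $\E$, and I would take this as the starting input. Granting it, \myref{charglobalpower} becomes formal once \myref{ratpoglobalpower} is established: the character map $\chi$ is a ring homomorphism, so it commutes with $\Res$ and with products; it is $\E^0$-linear, so it commutes with the $\E^0$-linear transfers $\Tr$; and it commutes with the power operations by \myref{rationalized}. Hence the real work is \myref{ratpoglobalpower}: endow $p^{-1}\E^0(B-)$, with the operations $p^{-1}\Po_m$ of \myref{rationalized}, with the structure of a global power functor.

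Next I would dispatch the underlying global Green functor. Since $p^{-1}\E^0(B-)=\E^0(B-)\otimes_{\E^0}p^{-1}\E^0$ is the localization of $\E^0(B-)$ away from $p$, the restrictions extend uniquely (they are ring maps), the transfers extend uniquely (they are $\E^0$-linear by Frobenius reciprocity), and the Mackey and Frobenius relations persist because $p^{-1}(-)$ is exact; alternatively, this is the global Green functor of the $E_\infty$-ring $\E[1/p]=\E\wedge S[1/p]$ under the identification $\E[1/p]^0(BG)\cong p^{-1}\E^0(BG)$, valid for finite $G$ since $\E^0(BG)$ is a finitely generated $\E^0$-module, cf.\ \cite{hkr}. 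The content is therefore the verification of the power-operation axioms for $p^{-1}\Po_m$. The unital, multiplicative and normalization axioms are inherited directly, because $p^{-1}\Po_m$ is by construction the restriction to $\Aut(\qz)$-fixed points of the multiplicative natural transformation $\Po^{\phi}_m$ of \myref{def:rationalpo}. For each of the remaining axioms --- additivity, composition, and compatibility with $\Res$ and $\Tr$ --- the plan is to first transport the corresponding identity for $\Po_m$, which holds by the input above, along $\chi$ using \myref{clmainthm} and \myref{rationalized}, obtaining the identity on the image of $\chi$, and then to propagate it to all of $p^{-1}\E^0(B-)$ by a direct computation on class functions: through the Hopkins--Kuhn--Ravenel embedding $p^{-1}\E^0(H\wr\Sigma_m)\hookrightarrow Cl_n(H\wr\Sigma_m,C_0)$ and the explicit formula of \myref{def:rationalpo}, both sides of each axiom become $C_0$-valued functions on $\hom(\lat,-)_{/\sim}$, so the identity can be checked after evaluating at a single homomorphism, where it unwinds to bookkeeping with the section $\phi$ and the combinatorics of $\Sigma_m$ and of wreath products.

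I expect the compatibility with transfers to be the hard step. The formula for $\Po^{\phi}_m$ is a sum indexed by $(G\wr\Sigma_m)$-conjugacy classes of homomorphisms $\lat\to G\wr\Sigma_m$, and controlling how this sum meshes with induction from a subgroup $H\le G$ requires the same kind of orbit analysis --- which $(G\wr\Sigma_m)$-orbits of $\hom(\lat,G\wr\Sigma_m)$ meet the image of $\hom(\lat,H\wr\Sigma_m)$, and with what multiplicities --- that already appears in the proof of \myref{clmainthm}, so that argument should supply what is needed. The composition axiom is a milder instance of the same issue: the section $\phi$ need not be compatible with $\Sigma_k\wr\Sigma_m\to\Sigma_{mk}$, and one must check that the resulting discrepancy vanishes upon passing to $\Aut(\qz)$-fixed points. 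Everything else should be routine bookkeeping on top of the machinery developed for \myref{clmainthm}.
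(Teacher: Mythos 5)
Your outline is correct in substance, but for the one genuinely nontrivial axiom it takes the route the paper only sketches, while the paper's actual proof goes the other way. After the preliminary reductions (which match: \myref{charglobalpower} is formal from \myref{ratpoglobalpower} plus \myref{rationalized}, the Green functor structure localizes, and relations (1)--(4) of the proposition in Section~\ref{sec:rationalpo} already give unitality, multiplicativity, additivity and the transfer formula for every $\Po^{\phi}$ on all class functions), the remaining axiom is the composition relation $\nabla^*\circ p^{-1}\Po_{p^k}=p^{-1}\Po_{p^l}\circ p^{-1}\Po_{p^t}$. The paper proves this by \emph{descent from $E$-theory}: reduce to abelian $A$ via \myref{embedding}, induct on $k$ using the injection of \myref{injectioncl} into a product of a transfer-ideal quotient and a restriction, and on the quotient factor compare with the commuting unrationalized rectangle (which commutes by \myref{porelations}) using that $\E(BA)$ is free and both composites are ring maps inducing the same $p^{-1}\E$-algebra structure. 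Your proposal instead verifies the axiom by direct computation on $\Aut(\qz)$-invariant class functions; this is exactly the ``completely computational proof'' the paper declares possible and leaves as an exercise, and it does work: $\phi_{T_{i,j}/H_i}\phi_{H_i}$ and $\phi_{T_{i,j}}$ have the same kernel, hence differ by some $\gamma\in\Aut(\qz)$, and the discrepancy is absorbed by the invariance of $f$ exactly as in the second proof of \myref{independent} and the computation in \myref{powersecthm}. The computational route avoids the induction and the freeness/module-structure argument entirely (no propagation from the image of $\chi$ is needed, since the computation applies to every invariant class function directly --- your hybrid ``transport along $\chi$, then propagate'' framing conflates the two strategies and you should commit to one); the paper's route buys naturality from the $E_\infty$-structure but must be careful, as its closing remark notes, not to accidentally ``prove'' the axiom for $\Po^{\phi}$ on all of $Cl_n(G,C_0)$, which would contradict \myref{powersecthm}. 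Two small corrections: compatibility with transfers is \emph{not} the hard step --- it is relation (4), already established combinatorially for arbitrary sections --- and the composition axiom is not a ``milder instance'' of it but the one place where restriction to $\Aut(\qz)$-fixed points is genuinely essential; you have the difficulty assessment inverted, though your description of what must be checked for composition is the right one.
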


Note that Theorem C is not a direct consequence of Theorem B because Theorem B does not say anything about composites of rational total power operations.

\subsection{Outline}

In \Cref{sec:notation}, we establish notation and terminology that will be used in the rest of the paper.

The first part of this paper, comprised of \Cref{sec:grouptheory} through~\Cref{sec:rationalpo}, contains results that do not rely on Morava $E$-theory. We hope that some of the ideas in these sections might be of interest to readers outside of stable homotopy theory. The goal of \cref{sec:grouptheory} is a thorough study of conjugacy classes of commuting elements in wreath products $G \wr \Sigma_m$ and their algebro-geometric interpretation. We suspect that this material is well-known, but were unable to locate a reference in the literature. \Cref{sec:isogenies} deals with the action of the isogenies of $\qz$ on the Drinfeld ring of full level-structures on $\G$. This is fundamental for the rest of the paper and extends work of~\cite{hkr}. For all $m \geq 0$ we construct multiplicative natural transformations $\Po^{\phi}_{m}$ on class functions in \Cref{sec:rationaltrafo} and study their basic properties. The question of whether one can choose a section $\phi$ such that the associated transformation $\Po^{\phi}_{m}$ inherits the structure of a global power functor is considered in \Cref{sec:rationalpo}. We give an affirmative answer for heights $1$ and $2$ by stipulating an explicit solution. 

The second part begins in \Cref{sec:etheoryrecolletions} with some recollections on Morava $E$-theory and power operations. As a first step towards the proof of our main theorem, we consider the case of abelian groups in \Cref{sec:proofabelian}. Using work of Schlank and Stapleton \cite{genstrickland}, we extend the algebro-geometric description of the additive total power operations for abelian groups due to Ando, Hopkins, and Strickland~\cite{ahs}. Inspired by Artin induction, \Cref{sec:proofgeneral} then proves Theorem A by reducing it to the case of abelian groups. Our work culminates in the final \Cref{sec:maintheorem}, where the previous results are combined to descend the multiplicative natural transformations to rationalized $E$-theory. We prove uniqueness and establish the global power structure on the resulting multiplicative natural transformation. 

\subsection*{Acknowledgements}
It is a pleasure to thank Charles Rezk and Tomer Schlank for their valuable input. Without some of their ideas this project would not have gotten off the ground. We would like to thank Nora Ganter, Justin Noel, and Chris Schommer-Pries for helpful conversations, and Drew Heard for useful comments on an earlier draft. The referee and Haynes Miller made many comments leading to an improvement of the exposition of the paper. We also thank the Max Planck Institute for Mathematics for its hospitality.

\section{Notation and conventions}\label{sec:notation}
Let $G$ be a finite group and fix a prime $p$ and a natural number $n \geq 0$. We set $\lat = \Z_p^n$ and let $\hom(\lat, G)$ be the set of continuous homomorphisms of groups from $\lat$ to $G$. Fixing a basis of $\lat$ gives a bijection
\[
\hom(\lat, G) \cong \{(g_1, \ldots, g_n)| [g_i, g_j] = e \text{ and } g_{i}^{p^k} = e \text{ for some } k > 0\},
\]
where the target is the set of $n$-tuples of pairwise commuting prime-power order elements in $G$. The group $G$ acts on this set by conjugation; we write $\hom(\lat, G)_{/\sim}$ for the set of conjugacy classes. Moreover, we will write $[\al]$ for the conjugacy class of a map $\al\colon \lat \lra{} G$.

Let $G \wr \Sigma_{m}$ be the wreath product of $G$ with $\Sigma_{m}$, which is constructed as the semidirect product $G^{m} \rtimes \Sigma_{m}$ with respect to the permutation action of $\Sigma_m$ on $G^{m}$. The wreath product fits into a short exact sequence
\[
1 \lra{} G^{m} \lra{} G \wr \Sigma_{m} \lra{\pi} \Sigma_{m} \lra{} 1.
\]
We will always write $\pi$ for the projection onto the symmetric group, and represent elements in $G \wr \Sigma_{m}$ as $(g_1,\ldots,g_m;\sigma)$ with $g_i \in G$ and $\sigma \in \Sigma_m$. We may abbreviate this to $(\bar{g};\sigma)$, where $\bar{g} = (g_1, \ldots, g_m)$ and $\bar{g}_i = g_i$. Multiplication in the wreath product is given by
\begin{equation} \label{wreathformula}
(\bar{g};\sigma)(\bar{h};\tau) = (\bar{g} (\sigma \bar{h}); \sigma \tau),
\end{equation}
where $(\sigma \bar{h})_i = h_{\sigma^{-1}i}$.

We call a map $\al\colon\lat \lra{} \Sigma_{m}$ transitive if the map represents a transitive $\lat$-set of order $m$. Equivalently, the image $\im \al$ of the map $\al$ is a transitive abelian subgroup of $\Sigma_{m}$. This can only occur when $m = p^k$. We say that a map 
\[
\al \colon \lat \lra{} G \wr \Sigma_{p^k}
\]
is transitive if $\pi \al$ is transitive. Let $\hom(\lat, G\wr \Sigma_{p^k})^{\trans}$ be the set of transitive maps and let $\hom(\lat, G\wr \Sigma_{p^k})_{/\sim}^{\trans}$ be the set of conjugacy classes of transitive maps.

Pontryagin duality is pervasive throughout this paper. We will use the notation $(-)^*$ for the ($p$-typical) Pontryagin duality endofunctor on abelian groups. Thus for an abelian group $A$, we write
\[
A^* = \hom(A, \Q_p/\Z_p)
\] 
for the dual abelian group. For a map of abelian groups $A \lra{f} B$, we write $f^*$ for the dual map. Let $\qz = \lat^*$, then we have a non-canonical isomorphism
\[
\qz \cong \QZ{n}.
\]
Let $[p^k] \colon \qz \rightarrow \qz$ the multiplication by $p^k$ map and let $\qz[p^k]$ be the $p^k$-torsion in $\qz$.

Let $\Sub_{p^k}(\qz)$ be the set of subgroups of order $p^k$ in $\qz$. Let $\Sum_{m}(\qz)$ be the set of formal sums of subgroups $\bigoplus_i H_i$ with $H_i \subset \qz$ and $\sum_i |H_i| = m$. It is clear that $\Sub_{p^k}(\qz)$ is a subset of $\Sum_{p^k}(\qz)$. Given a subgroup $H \subset \qz$ of order $p^k$, let $f_H\colon H \hookrightarrow \qz$ be the inclusion. The Pontryagin dual of $f_H$,
\[
f_{H}^* \colon \lat \twoheadrightarrow H^*,
\]
has the associated short exact sequence
\[
0 \rightarrow \ker(f_{H}^*) \hookrightarrow \lat \twoheadrightarrow H^* \rightarrow 0.
\]
The kernel of $f_{H}^*$ is canonically isomorphic to $(\qz/H)^*$ and non-canonically isomorphic to $\lat$. We set $\lat_H = \ker(f_{H}^*) \subset \lat$.

More generally, we define $\Sub_{p^k}(\qz,G)$ to be the set of pairs consisting of a subgroup of order $p^k$, $H \subset \qz$, and a conjugacy class in $\hom(\lat_H, G)$. We will write such a pair as $(H, [\al])$. We define $\Sum_{m}(\qz, G)$ to be the collection of formal sums of pairs $\bigoplus_i (H_i, [\al_i])$, where $H_i \subset \qz$ such that $\sum_i |H_i| = m$, and 
\[
[\al_i] \in \hom(\lat_{H_i}, G)_{/\sim}.
\]
If $e$ is the trivial group, then we have 
\[
\Sub_{p^k}(\qz,e) = \Sub_{p^k}(\qz)
\]
and
\[
\Sum_{m}(\qz,e) = \Sum_{m}(\qz).
\]
Finally, given an isogeny $\phi_H\colon \qz \lra{} \qz$ with kernel $H$, it is necessary to have notation for the induced triangle
\[
\xymatrix{& \qz \ar@{->>}[rd]^{\phi_H} \ar@{->>}[dl]_{q_H} &  \\ 
\qz/H \ar[rr]_{\psi_H}^{\cong} && \qz.}
\]
%\[
%\xymatrix{\qz \ar@{->>}[r]^{\phi_H} \ar@{->>}[d]_{q_H} & \qz \\ \qz/H \ar[ur]_{\psi_H}^{\cong}. &}
%\]
The Pontryagin dual triangle then takes the form
\[
\xymatrix{& \lat & \\
 \lat_H \ar@{^{(}->}[ru]^{q_{H}^*} & &  \lat. \ar@{_{(}->}[ul]_{\phi_{H}^*} \ar[ll]^{\psi_{H}^*}_{\cong} }
\]
%\[
%\xymatrix{\lat & \lat \ar@{_{(}->}[l]_{\phi_{H}^*} \ar[dl]^{\psi_{H}^*}_{\cong} \\ \lat_H \ar@{^{(}->}[u]^{q_{H}^*}. &}
%\]

\section{Conjugacy classes in wreath products}\label{sec:grouptheory}

The purpose of this section is to establish a canonical bijection between conjugacy classes in wreath products and formal sums of subgroups in $\qz$:
\[
\xymatrix{\hom(\lat, G \wr \Sigma_{m})_{/\sim} \ar[r]^-{F}_-{\cong} & \Sum_{m}(\qz,G).}
\]
In fact, we have canonical bijections, which are natural in $G$, as indicated in the table below.

\begin{center}
\begin{tabular}{|l|l|}
\hline
Conjugacy classes & Subgroups \\
\hline
$\hom(\lat,\Sigma_{p^k})^{\trans}_{/\sim}$ & $\Sub_{p^k}(\qz)$\\
$\hom(\lat, \Sigma_{m})_{/\sim}$ & $\Sum_{m}(\qz)$ \\
$\hom(\lat, G \wr \Sigma_{p^k})^{\trans}_{/\sim}$ & $\Sub_{p^k}(\qz,G)$ \\
$\hom(\lat, G \wr \Sigma_{m})_{/\sim}$ & $\Sum_{m}(\qz,G)$ \\
\hline
\end{tabular}
\end{center}

In the case that $\lat = \Z_p$, these isomorphisms can be obtained from Section 7.7 of \cite{zel}. 

%The general case is made more complicated by the fact that there is no canonical isomorphism between $\lat$ and its sublattices.

We will use $F$ for any of the bijections in the table (they are all special cases of the bottom entry of the table). The top two bijections are well-known; we will begin by describing $F$ in these cases. 

The canonical bijection between $\hom(\lat,\Sigma_{p^k})^{\trans}_{/\sim}$ and $\Sub_{p^k}(\qz)$ is constructed as follows: Let $\lat \lra{\al} \Sigma_{p^k}$ be a transitive map and let $\lat ' = \ker \al$. The map sends $[\al]$ to the kernel of the Pontryagin dual of the map $\lat ' \hookrightarrow \lat$
\[
H = \ker(\lat^* \twoheadrightarrow (\lat ')^*).
\]
Since a conjugate map has the same kernel, this assignment does not depend on the chosen representative $\al$ for the conjugacy class $[\al]$. The resulting subgroup $H$ is an invariant of the conjugacy class $[\al]$. There is another description of this subgroup. The map $\al$ determines a transitive $\lat$-set of order $p^k$. The lattice $\lat '$ is the stabilizer of any element in the set and then $H$ may be formed as the kernel above.

The second bijection is constructed similarly. A map $\al \colon \lat \rightarrow \Sigma_m$ is an $\lat$-set $X$ of order $m$. Let $X = \Coprod{i} X_i$ be the decomposition of $X$ into transitive $\lat$-sets. By following the recipe above, each component gives a subgroup $H_i \subset \qz$. Since a conjugate map corresponds to an isomorphic $\lat$-set, this collection of subgroups of $\qz$ is an invariant of the conjugacy class:  
\[
[\al] \mapsto F([\al]) = \bigoplus_i H_i.
\]

Now we establish the third and fourth bijections in the table above. Let $\uG$ be a left $G$-torsor (a free transitive left $G$-set) and let $\um$ be a set with $m \in \N$ elements and a trivial $G$-action. We will make use of three basic lemmas regarding $G$-sets of the form $\uG \times \um$. 

Let $\bar{x} = (x_1, \ldots, x_m)$ be a set of generators for $\uG \times \um$ as a $G$-set. This choice of generators induces an isomorphism
\begin{equation} \label{wreathiso}
\bar{x}^* \colon \Aut_G(\uG \times \um) \cong G \wr \Sigma_m.
\end{equation}
We may be more explicit about this isomorphism. An automorphism $f \in \Aut_G(\uG \times \um)$ permutes the $m$ $G$-torsors, call this permutation $\sigma_f$. Thus $f(x_i) = g_{\sigma_f(i)}x_{\sigma_f(i)}$ for some $g_{\sigma_{f}(i)} \in G$. The isomorphism is given by
\[
f \mapsto \bar{x}^*(f) = (g_{1}^{-1}, \ldots, g_{m}^{-1}; \sigma_f) = (\bar{g}^{-1};\sigma_f).
\]
This defines a homomorphism: If $h \in \Aut_G(\uG \times \um)$ maps to
\[
h \mapsto (\bar{k}^{-1}; \sigma_h) \in G \wr \Sigma_m,
\]
then 
\[
(h\circ f)(x_i) = h(g_{\sigma_f(i)}x_{\sigma_f(i)}) = g_{\sigma_f(i)} k_{\sigma_h \sigma_f(i)} x_{\sigma_h \sigma_f(i)}
\]
and, by \cref{wreathformula},
\begin{align*}
(\bar{k}^{-1}(\sigma_h \bar{g}^{-1}); \sigma_h \sigma_f) (x_i) &= ((\bar{k}^{-1}(\sigma_h \bar{g}^{-1}))_{\sigma_h \sigma_f(i)})^{-1} x_{\sigma_h \sigma_f(i)} \\ &= (k_{\sigma_h \sigma_f(i)}^{-1}(\sigma_h \bar{g}^{-1})_{\sigma_h \sigma_f(i)})^{-1} x_{\sigma_h \sigma_f(i)}\\ &= (\sigma_h \bar{g})_{\sigma_h \sigma_f(i)}k_{\sigma_h \sigma_f(i)}x_{\sigma_h \sigma_f(i)}\\ &= g_{\sigma_{h}^{-1}\sigma_h \sigma_f(i)}k_{\sigma_h \sigma_f(i)}x_{\sigma_h \sigma_f(i)}\\ &= g_{\sigma_f(i)}k_{\sigma_h \sigma_f(i)}x_{\sigma_h \sigma_f(i)}.
\end{align*}

Given a group homomorphism $s \colon G \rightarrow K$ and a $G$-set $X$, we may functorially form the left $K$-set
\[
K \times_G X,
\]
which is the quotient of $K \times X$ by the relation $(ks(g),x) \sim (k,gx)$. This is a $K$-set through the left action of $K$ on itself. The operation $K \times_G -$ is left adjoint to the functor from $K$-sets to $G$-sets given by viewing a $K$-set as a $G$-set through $s$. The $K$-set
\[
K \times_G (\uG \times \um)
\]
is a disjoint union of $m$ $K$-torsors and the unit of the adjunction is a canonical map of $G$-sets
\[
\eta \colon \uG \times \um \rightarrow K \times_G (\uG \times \um).
\]
Given a set of generators $\bar{x}$ of $\uG \times \um$, $\eta(\bar{x})$ is a set of generators of $K \times_G (\uG \times \um)$ and this induces the commutative diagram
\begin{equation} \label{naturality}
\xymatrix{\Aut_G(\uG \times \um) \ar[r]^-{\bar{x}^*} \ar[d] & G \wr \Sigma_m \ar[d] \\ \Aut_K(K \times_G (\uG \times \um)) \ar[r]^-{\eta(\bar{x})^*} & K \wr \Sigma_m.}
\end{equation}

Although the isomorphism of Equation \ref{wreathiso} depends on a choice of generators of $\uG \times \um$, the next lemma shows that this dependence is removed by passing to conjugacy classes.

\begin{lemma} \label{lem:1}
There is a canonical isomorphism, natural in the group $G$,
\[
\hom(\lat, G \wr \Sigma_m)_{/\sim} \cong \hom(\lat, \Aut_G(\uG \times \um))_{/\sim}.
\]
\end{lemma}
\begin{proof}
Let $\bar{x}$ and $\bar{y}$ be sets of generators of $\uG \times \um$ as a $G$-set. Then there exists an automorphism of $f$ of $\uG \times \um$ such that $f(x_i) = y_i$. This automorphism induces an automorphism
\[
\Aut_G(\uG \times \um) \lra{c_f} \Aut_G(\uG \times \um)
\]
given by conjugation with $f$ and the following diagram commutes
\[
\xymatrix{\Aut_G(\uG \times \um) \ar[r]^{\bar{x}} \ar[d]_{c_f} & G \wr \Sigma_m \\ \Aut_G(\uG \times \um) \ar[ur]_{\bar{y}}.}
\]
For $\al$ and $\al '$ conjugate elements in $\hom(\lat, \Aut_G(\uG \times \um))$, it suffices to prove that $\bar{x}^*(\al)$ is conjugate to $\bar{y}^*(\al ')$. Assume that $\al ' = g \al g^{-1}$, then the following diagram commutes
\[
\xymatrix{& \Aut_G(\uG \times \um) \ar[d]_-{c_g} \ar[r]^-{\bar{x}} & G \wr \Sigma_m \ar@{-->}[dd] \\ \lat \ar[ur]^{\al} \ar[r]_-{\al '} & \Aut_G(\uG \times \um) \ar[dr]^-{\bar{y}} \ar[d]_-{c_f} & \\ & \Aut_G(\uG \times \um) \ar[r]_-{\bar{x}} & G \wr \Sigma_m.}
\]
The dashed arrow is built by passing through the isomorphisms and is thus given by conjugation. Thus $\bar{x}^*(\al)$ is conjugate to $\bar{y}^*(\al ')$.

The isomorphism is natural by Diagram \ref{naturality}.
\end{proof}

A map $\lat \rightarrow \Aut_G(\uG \times \um)$ corresponds to an $\lat$-set structure on $\uG \times \um$ in the category of $G$-sets. By adjunction, this corresponds to an $\lat \times G$-set of order $|G| \times m$ with a free $G$-action.

\begin{lemma} \label{lem:2}
By adjunction, there is a canonical isomorphism that is natural in the group $G$ between
\[
\hom(\lat, \Aut_G(\uG \times \um))_{/\sim}
\]
and the set of isomorphism classes of $\lat \times G$-sets of order $|G| \times m$ with a free $G$-action.
\end{lemma}

The projection map $\uG \times \um \rightarrow \um$ induces 
\[
\Aut_G(\uG \times \um) \rightarrow \Aut(\um).
\]
Thus it makes sense to define 
\[
\hom(\lat, \Aut_G(\uG \times \um))^{\trans}_{/\sim}
\]
to consist of the conjugacy classes that induce a transitive $\lat$-action on $\um$ under the projection.  These, in turn, give rise to isomorphism classes of transitive $\lat \times G$-sets with a free $G$-action under the isomorphism of the lemma above. Transitive $\lat$-actions on $\um$ only exist when $\um$ has cardinality $p^k$ for some $k$. In the transitive case we will write $\pk$ instead of $\um$.

As usual, isomorphism classes of transitive $\lat \times G$-sets of order $|G| \times p^k$ correspond (canonically) to conjugacy classes of index $|G| \times p^k$ subgroups of $\lat \times G$. The conjugacy classes of subgroups giving rise to a transitive $\lat \times G$-set with a free $G$-action have a nice description.

\begin{lemma} \label{lem:3}
There is a canonical isomorphism, natural in the group $G$, between conjugacy classes of index $|G| \times p^k$ subgroups $M$ of $\lat \times G$ such that $(\lat \times G)/M$ has a free $G$-action and pairs 
\[
(\lat_H \subseteq \lat, [\al_H \colon \lat_H \rightarrow G])
\]
consisting of an index $p^k$ sublattice $\lat_H \subset \lat$ and a conjugacy class of maps 
\[
[\al_H \colon \lat_H \rightarrow G].
\] 
\end{lemma}
\begin{proof}
Subgroups $M \subset \lat \times G$ with the property that $(\lat \times G)/M$ has a free $G$-action are in correspondence with subgroups of $\lat \times G$ that intersect $0 \times G \subset \lat \times G$ trivially. 

The data of a subgroup with this property has a very simple description. If $M \subset \lat \times G$ intersects $0 \times G$ trivially, then the composite 
\[
M \hookrightarrow \lat \times G \rightarrow \lat
\] 
is injective, where the last map is the projection: If $m \neq m' \in M$ map to $(l,g) \neq (l,g')$ respectively, then $m(m')^{-1}$ maps to $(0,g(g')^{-1})$ which is not the identity. This contradicts the assumption that the intersection with $0 \times G$ is trivial. 

Thus $M$ may be identified with $\lat_H$ for some $\lat_H \subset \lat$ with $H \subset \qz$ of order $m$. Since $\lat$ is abelian, a conjugacy class of subgroups is a subgroup. Thus a conjugacy class of subgroups of $\lat \times G$ of order $|G| \times p^k$ that intersect $0 \times G$ trivially corresponds to a subgroup $\lat_H \subset \lat$ of index $p^k$ and a conjugacy class of maps $[\al_H \colon \lat_H \rightarrow G]$.

Next we prove that the isomorphism is natural. Let $s \colon G \rightarrow K$ be a group homomorphism. The induced map 
\[
\id \times s \colon \lat \times G \rightarrow \lat \times K
\]
sends conjugacy classes of subgroups to conjugacy classes of subgroups. If $M \subset \lat \times G$ intersect $0 \times G$ trivially, then so does $(\id \times s)(M)$.

Also, $s$ induces the map on pairs $(\lat_H, [\al])$ sending 
\[
(\lat_H, [\al]) \mapsto (\lat_H, [s \al]).
\]

Now, by the construction above of $(\lat_H, \al)$ from a subgroup $M$ that intersect $0 \times G$ trivially, we see that $s(M)$ is sent to $(\lat_H,s \al)$. Conjugating the subgroup $M$ just conjugates $s \al$. 
\end{proof}

By construction, the isomorphism of \cref{lem:1} restricts to a canonical isomorphism
\[
\hom(\lat, G \wr \Sigma_{p^k})^{\trans}_{/\sim} \cong \hom(\lat, \Aut_G(\uG \times \pk))^{\trans}_{/\sim}.
\]
For $[\al] \in \hom(\lat, G \wr \Sigma_{p^k})^{\trans}_{/\sim}$, we may use the previous three lemmas to define
\[
F([\al]) = (H, [\al_H]).
\]

\begin{prop} \label{subs}
The map 
\[
\xymatrix{F \colon \hom(\lat, G \wr \Sigma_{p^k})^{\trans}_{/\sim} \ar[r]^-{\cong} & \Sub_{p^k}(\qz,G),}
\]
given by $F([\al]) = (H, [\al_H])$, is a canonical isomorphism that is natural in $G$.
\end{prop}
\begin{proof}
Combine the canonical isomorphisms of the previous three lemmas.
\end{proof}

It is worth explaining an equivalent way of obtaining $(H, [\al_H])$ from $[\al] \in \hom(\lat, G \wr \Sigma_{p^k})^{\trans}_{/\sim}$. We have a commutative diagram
\[
\xymatrix{\lat_H \ar[r]^{(\beta_i)} \ar[d]_-{q_{H}^*} & G^{p^k} \ar[d] \\ \lat \ar[r]^-{\al} \ar[dr]_-{\pi \al} & G \wr \Sigma_{p^k} \ar[d]^-{\pi} \\ & \Sigma_{p^k},}
\]
defining $p^k$ maps $\beta_i \colon \lat_H \rightarrow G$. A set of generators $\bar{x}$ of $\uG \times \pk$ produces an isomorphism
\[
\bar{x}^* \colon \Aut_G(\uG \times \pk) \rightarrow G \wr \Sigma_{p^k}
\]
and the image of $\lat_H \lra{(q_{H}^*,\beta_i)} \lat \times G$ is the stabilizer of $x_i$ in the transitive $\lat \times G$-set determined by the composite $(\bar{x}^*)^{-1}\al$. By \cref{lem:3}, all of the maps $\beta_i$ are conjugate and we may take $\al_H$ to be any of the $\beta_{i}$'s. 

\begin{cor}\label{factorizationlemma}
We may choose $\al \in [\al] \in \hom(\lat, G \wr \Sigma_{p^k})^{\trans}_{/\sim}$ with the property that it factors through the inclusion
\[
\xymatrix{& (\im \al_H) \wr \Sigma_{p^k} \ar@{^{(}->}[d] \\ \lat \ar[r]_-{\al} \ar@{-->}[ur] & G\wr \Sigma_{p^k}.}
\]
%Another way of saying this is that $\al$ can be chosen so that $\al_H
\end{cor}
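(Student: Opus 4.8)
The plan is to build on Proposition~\myref{representative} by observing that the factorization it provides already constrains the image of $\al^c$. Recall from that proposition that we may choose $\al^c \in [\al]$ so that $\pi\al^c$ factors through the diagonal $\Delta\colon G \to G \wr \Sigma_{p^k}$, i.e.\ so that $\al^c$ lands in the ``constant'' copies coming from $\bar{\al}^c\colon \lat_H \to G$ together with a permutation part. First I would unwind exactly which coordinates $g_i \in G$ appear in the values $\al^c(x)$ for $x \in \lat$: by the construction in the proof of \myref{representative}, the $g_i$ are built from the single map $\bar{\al}^c\colon \lat_H \to G$ transported around the cycles of $\pi\al^c$, so every coordinate that ever occurs lies in the subgroup $\im(\bar{\al}^c) \subseteq G$ generated by the image of $\bar{\al}^c$.

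The next step is to identify $\im(\bar{\al}^c)$ with (a subgroup of) $\im(\al^c)$. Since $\al^c$ restricted to $\lat_H$ equals $\Delta \circ \bar{\al}^c$, for $x \in \lat_H$ we have $\al^c(x) = (\bar{\al}^c(x), \ldots, \bar{\al}^c(x); e)$, so $\bar{\al}^c(x)$ is recovered as any coordinate of $\al^c(x)$; hence $\im(\bar{\al}^c) \subseteq \{g \in G \mid (g,\ldots,g;e) \in \im\al^c\text{-coordinates}\}$, and in particular $\im(\bar{\al}^c)$ sits inside the subgroup $K \subseteq G$ defined as the subgroup generated by all coordinates of all elements of $\im\al^c$. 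One checks $K = \im(\bar{\al}^c)$ because, conversely, every coordinate appearing in a value $\al^c(x)$ is (by the explicit formula at the end of the proof of \myref{subs}, or directly from the construction) a value of $\bar{\al}^c$ on some element of $\lat_H$. Therefore $\al^c$ factors set-theoretically, and then automatically as a group homomorphism, through the subgroup $K \wr \Sigma_{p^k} = \im(\bar{\al}^c) \wr \Sigma_{p^k} \subseteq G \wr \Sigma_{p^k}$. Finally, $\im(\al^c)$ is a subgroup of $K \wr \Sigma_{p^k}$ whose projection to $\Sigma_{p^k}$ is all of $\im\pi\al^c$ and which surjects onto $K$ in each coordinate that meets it; a short argument shows $K \subseteq \im\al^c$ as well (the diagonal elements $(g,\ldots,g;e)$ for $g \in K$ all arise), so in fact $\im(\al^c)\wr\Sigma_{p^k} \supseteq K\wr\Sigma_{p^k} \supseteq \im\al^c$, giving the claimed factorization through $(\im\al^c)\wr\Sigma_{p^k}$.

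I would organize the write-up as: (1) invoke \myref{representative} to get $\al^c$ with the diagonal factorization of $\pi\al^c$ and the data $\bar{\al}^c\colon\lat_H\to G$; (2) observe that all $G$-coordinates of all values of $\al^c$ lie in $\im(\bar{\al}^c)$, using the explicit cycle-transport description from the proofs of \myref{representative} and \myref{subs}; (3) conclude $\al^c$ factors through $\im(\bar{\al}^c)\wr\Sigma_{p^k}$; (4) note $\im(\bar{\al}^c) \subseteq \im(\al^c)$ via the diagonal elements, so a fortiori $\al^c$ factors through $(\im\al^c)\wr\Sigma_{p^k}$.

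The main obstacle is bookkeeping in step~(2): making precise that \emph{no} coordinate outside $\im(\bar{\al}^c)$ can appear. The clean way is to use the normal-form representative from the end of the proof of \myref{subs}, where $\al^c$ is written on a basis with entries that are explicitly either $e$ or a value $a = \bar{\al}^c(l^t)$; since the values of $\al^c$ on all of $\lat$ are products of these, every coordinate lies in the subgroup generated by such $a$'s, which is exactly $\im(\bar{\al}^c)$. Everything else is formal, so I do not expect genuine difficulty beyond this coordinate-tracking.
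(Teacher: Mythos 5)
Your proposal is correct and takes essentially the same route as the paper: the paper's proof consists of pointing at the explicit normal-form representative constructed as the section of $F$ at the end of the proof of \myref{subs}, and your steps (2)--(4) are exactly the coordinate-tracking verification that this representative has all $G$-coordinates in $\im(\bar{\al}^c)$, together with the observation that the diagonal elements $(\bar{\al}^c(x),\ldots,\bar{\al}^c(x);e)$ realize $\im(\bar{\al}^c)$ inside $\im(\al^c)$.

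One caution: your opening claim that the factorization supplied by \myref{representative} \emph{already} forces every coordinate of every value of $\al^c$ into $\im(\bar{\al}^c)$ is false for a general representative satisfying that proposition. Take $n=1$, $p=2$, $G=\Z/2$, and $\al(l)=(1,1;(12))\in G\wr\Sigma_2$: since $\al(l)^2=(0,0;e)$, this $\al$ satisfies the conclusion of \myref{representative} with $\bar{\al}$ trivial, yet its coordinates are not in $\im(\bar{\al})=\{0\}$, and $\al$ does not factor through $\{0\}\wr\Sigma_2$; one must first conjugate to the normal form $(0,0;(12))$. So the normal-form representative from the proof of \myref{subs} is not merely ``the clean way'' to carry out your step (2) --- it is essential, and your argument is only valid once you commit to it, as you do in your final paragraph.
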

\begin{proof}
For any $\al \in [\al]$, the $p^k$ maps
\[
\lat_H \rightarrow G
\]
are all conjugate. Thus we may choose elements $g_2, \ldots, g_{p^k} \in G$ conjugating them to the first one. The element $(1, g_2, \ldots, g_{p^k};e) \in G\wr \Sigma_{p^k}$ conjugates $\al$ to a map satisfying the required condition.
\end{proof}

Since the isomorphism class of a finite $\lat \times G$-set is determined by the isomorphism classes of the transitive components of a representative, for $[\al] \in \hom(\lat, G \wr \Sigma_{m})_{/\sim}$ we may define
\[
F([\al]) = \Oplus{i}(H_i, [\al_{H_i}]),
\]
where the pairs $(H_i, [\al_{H_i}])$ are determined by the transitive components of the $\lat \times G$-set associated to $[\al]$.

\begin{prop} \label{sums}
There is a canonical isomorphism, which is natural in $G$,
\[
\xymatrix{F \colon \hom(\lat, G \wr \Sigma_{m})_{/\sim} \ar[r]^-{\cong} & \Sum_{m}(\qz,G)}
\]
given by $F([\al]) = \bigoplus_i (H_i, [\al_{H_i}])$.
\end{prop}

\begin{remark}
There is a more geometric interpretation of the above results, see \cite[Section~3.3.4, Corollary~3.7]{globalgreen}. Conjugacy classes of tuples of commuting elements are the homotopy classes
\[
[B\lat, BG\wr \Sigma_m].
\]
The space $B\lat$ is the $p$-complete torus. The homotopy classes $[B\lat, B\Sigma_m]$ are the (isomorphism classes of) $m$-fold covers of the $p$-complete torus and the homotopy classes $[B\lat, BG \wr \Sigma_m]$ are the (isomorphism classes of) principal $G$-bundles on $m$-fold covers of the $p$-complete torus.
\end{remark}

Now that we have proved the main result of this section, we draw several consequences and prove related results that will be used in the remaining part of the paper.

\begin{lemma}\label{padicsum}
Let $\sum_j a_j p^j$ be the $p$-adic expansion of $m$, then the inclusion
$\Prod{j} \Sigma_{p^j}^{a_j} \subseteq \Sigma_m$
induces a surjection
\[
\Prod{j} \Sum_{p^j}(\qz,G)^{\times a_j} \twoheadrightarrow \Sum_m(\qz,G).
\]
\end{lemma}
\begin{proof}
This is immediate from the definitions.
\end{proof}

\begin{prop} \label{diagonal}
The map $\Delta\colon G \times \Sigma_{p^k} \lra{} G\wr \Sigma_{p^k}$ induces the map
\[
\xymatrix{\Sub_{p^k}(\qz) \times \hom(\lat, G)_{/\sim} \ar[r] & \Sub_{p^k}(\qz,G)}
\]
defined by $(H,[\al:\lat \rightarrow G]) \mapsto (H, [\al q_{H}^*])$.
\end{prop} 
\begin{proof}
Let $\beta\colon \lat \rightarrow \Sigma_{p^k}$ be transitive with the property that $F([\beta]) = H$; we are interested in the composite
\[
\lat \lra{\al \times \beta} G \times \Sigma_{p^k} \rightarrow G \wr \Sigma_{p^k}.
\]
This fits in the commutative diagram
\[
\xymatrix{\lat_H \ar[d]_-{q_{H}^*} \ar[r]^-{\al q_{H}^*} & G \ar[d] \ar[r]^{\Delta} & G^{p^k} \ar[d] \\ \lat \ar[r]^-{\al \times \beta} & G \times \Sigma_{p^k} \ar[dr] \ar[r]^{\Delta} & G \wr \Sigma_{p^k} \ar[d] \\ && \Sigma_{p^k},}
\]
where the top row consists of the kernels of the maps to $\Sigma_{p^k}$. It follows that
\[
[(\Delta(\al \times \beta))_H] = [\al (q_{H}^*)].
\]

\begin{comment}
By the fundamental theorem for finitely generated free modules over a PID, we may fix generators $(l_1, \ldots, l_n)$ of $\lat$ with the property that there exist $t_i \in \Z_p$ such that $(t_1l_1, \ldots, t_nl_n)$ is a set of generators of $\lat_H$.  Let $(\al \times \beta)(l_i) = (g,\sigma) \in G\times \Sigma_{p^k}$, this maps to $(g, \ldots, g; \sigma) \in G \wr \Sigma_{p^k}$. Since $\sigma^{t_i} = e$ then
\[
(g, \ldots, g; \sigma)^{t_i} = (g^{t_i}, \ldots, g^{t_i}; e)
\]
by transitivity (the cycles in $\sigma$ must all have same length). Thus $l^t$, a generator of $\lat_H$, factors through $G \lra{} G \wr \Sigma_{p^k}$. Putting these together for the generators of $\lat$ gives the composite 
\[
\lat_H \lra{q_{H}^*} \lat \lra{\al} G
\]
fitting into the diagram
\[
\xymatrix{\lat_H \ar[d]_{q_{H}^*} \ar[r] & G \ar[d] \\ \lat \ar[r] \ar[ur]^{\al} & G \wr \Sigma_{p^k}.}
\]
\end{comment}
\end{proof}

Note that we now have a way of describing conjugacy classes in iterated wreath products of symmetric groups. By \cref{sums}, there is a bijection
\[
\hom(\lat, G \wr \Sigma_{s} \wr \Sigma_{t})_{/\sim} \cong \Sum_{t}(\qz,G \wr \Sigma_{s}).
\]
Conjugacy classes from $\lat_{H_i}$ to $G \wr \Sigma_{s}$ satisfy the same kind of formula, thus we may write
\[
\bigoplus_i(H_i, [\al_{H_i}]) = \bigoplus_i(H_i, \bigoplus_j (K_{i,j},[(\al_{H_i})_{K_{i,j}}])),
\]
where $K_{i,j} \subset \qz/H_i$.

\begin{prop} \label{iteratedwreath}
Let $st = m$, then the natural inclusion  
\[
\nabla \colon G \wr \Sigma_{s} \wr \Sigma_{t} \lra{} G \wr \Sigma_{m}\]
induces the map
\[
\bigoplus_i(H_i, \bigoplus_j (K_{i,j},[(\al_{H_i})_{K_{i,j}}])) \mapsto \bigoplus_{i,j}(T_{i,j},[(\al_{H_i})_{K_{i,j}}]) = \bigoplus_{i,j}(T_{i,j},[\al_{T_{i,j}}]), 
\]
where $T_{i,j}$ is defined as the pullback in the following diagram
\[
\xymatrix{T_{i,j} \ar[d] \ar[r] & \qz \ar[d] \\ K_{i,j} \ar[r] & \qz/H_i.}  
\]
\end{prop}
\begin{proof}
It suffices to show this on $\al \colon \lat \lra{} G\wr \Sigma_{p^j} \wr \Sigma_{p^t}$ with $\al$ transitive and $\al_H:\lat_H \lra{} G \wr \Sigma_{p^j}$ transitive. When $\al$ and $\al_H$ are transitive, $\nabla \al$ is transitive. In this case, we have $K \subset \qz/H$ and the pullback $T$ is the kernel of the composite
\[
\qz \lra{q_H} \qz/H \lra{} (\qz/H)/K.
\]
Now we check that $[(\al_H)_K] = [(\nabla \al)_T]$. By \cref{factorizationlemma}, $\al \in [\al]$ can be chosen so that
\[
\xymatrix{(\lat_{H})_K \ar[r] \ar[d] & \lat \ar[d]_-{\al} & \\ G \ar[r]^-{\Delta} \ar[d]_{=} & G \wr \Sigma_{s} \wr \Sigma_{t} \ar[d]^-{\nabla} \ar[r] & \Sigma_{s} \wr \Sigma_{t} \ar[d] \\ G \ar[r]^-{\Delta} & G \wr \Sigma_{m} \ar[r] & \Sigma_{m}} 
\]
commutes, where $(\lat_H)_K$ is the kernel of the composite $\lat \rightarrow G\wr \Sigma_s \wr \Sigma_t \rightarrow \Sigma_{s} \wr \Sigma_{t}$. Since $\Sigma_{s} \wr \Sigma_{t} \rightarrow \Sigma_{m}$ is injective, $(\lat_H)_K = \lat_T$ and $(\al_H)_K =  \al_T$.
\begin{comment}
 It fits into the diagram
\[
\xymatrix{\lat_T \ar[r] \ar[d]_{\al_T} & \lat_H \ar[r] \ar[d]^{\al_H} & \lat \ar[d] \ar[dr] & \\
G \ar[r] & G\wr \Sigma_{p^j} \ar[d]^{\pi} \ar[r] & G \wr \Sigma_{p^j} \wr \Sigma_{p^t} \ar[r]^-{\pi} & \Sigma_{p^t} \\
& \Sigma_{p^j} &&}
\]
which implies it fits into the diagram
\[
\xymatrix{\lat_T \ar[r] \ar[d]^{\bar{\al}^c} & \lat \ar[d]^{\nabla \beta} \ar[dr] & \\
G \ar[r] & G \wr \Sigma_{p^k} \ar[r]^{\pi} & \Sigma_{p^k}.}
\]
\end{comment}
\end{proof}

Let $\lat ' \cong \Z_{p}^n$ be another rank $n$ lattice and let $\qz ' = (\lat ')^*$. Let 
\[
\sigma \colon \qz \lra{\cong} \qz '
\]
be an isomorphism. Precomposition with $\sigma^*$ induces a bijection
\[
\hom(\lat, G \wr \Sigma_{m})_{/\sim} \lra{\cong} \hom(\lat ', G \wr \Sigma_{m})_{/\sim}.
\]
We explain the effect of this map on $\Sum_m(\qz,G)$.
\begin{prop} \label{isoaction}
Precomposition with $\sigma^*$ on $\hom(\lat, G \wr \Sigma_{m})_{/\sim}$ induces the map
\[
\Sum_m(\qz,G) \rightarrow \Sum_m(\qz ',G)
\] 
given by
\[
\bigoplus_i(H_i, [\al_{H_i}]) \mapsto \bigoplus_i( \sigma(H_i), [\al_{H_i}\sigma^{*}_{|_{\lat_{\sigma(H_i)} '}}]).
\]
\end{prop}
\begin{proof}
First we will verify the claim on the subgroups $H_i \subset \qz$. If $\lat_{H_i}$ stabilizes $x \in \um_{\pi \al}$, then $H_i$ is the kernel of $\qz = \lat^* \rightarrow \lat_{H_i}^*$. The stabilizer of $x \in \um_{\pi \al \sigma^*}$ is the preimage of $\lat_{H_i}^*$ under $\sigma^*$. The kernel of the map $(\lat ')^* \rightarrow ((\sigma^*)^{-1} \lat_{H_i})^*$ is $\sigma H_i$.

Now we may assume that we have a transitive map $\lat \lra{\al}  G \wr  \Sigma_{p^k}$ such that $[\pi \al]$ corresponds to $H$. By \cref{factorizationlemma}, we may assume that $\al_{|\lat_{H}} = \Delta \al_{H}$. Since $(\sigma^*)^{-1} \lat_{H} = \lat_{\sigma H}$, the diagram
\[
\xymatrix{\lat_{\sigma H} ' \ar[r] \ar[d]_{\sigma^{*}_{|_{\lat_{\sigma(H)} '}}} & \lat ' \ar[d]^{\sigma^*} \\ \lat_{H} \ar[r] \ar[d]_{\al_{H}} & \lat \ar[d]^{\al} \\ G \ar[r]^-{\Delta} & G \wr \Sigma_{p^k},}
\]
shows that $[\al_{H}]$ is sent to $[\al_{H}\sigma^{*}_{|_{\lat_{\sigma(H)} '}}]$.
\end{proof}

There is an obvious left action of $\Aut(\qz)$ on $\hom(\lat, G \wr \Sigma_{m})_{/\sim}$ given by precomposition with the Pontryagin dual. The previous proposition gives a formula for this action on $\Sum_{m}(\qz,G)$.

\begin{cor} \label{sumsaction}
The left action of $\sigma \in \Aut(\qz)$ on $\Sum_{m}(\qz,G)$ induced by the action of $\Aut(\qz)$ on $\hom(\lat, G \wr \Sigma_{m})_{/\sim}$ is given by 
\[
\bigoplus_i(H_i, [\al_{H_i}]) \overset{\sigma}{\mapsto} \bigoplus_i( \sigma(H_i), [\al_{H_i}\sigma^{*}_{|_{\lat_{\sigma(H_i)}}}]).
\]
\end{cor}

\begin{comment}
A corollary of the previous proposition is that  translate this action to $\Sum_{m}(\qz,G)$. Let $\sigma \in \Aut(\qz)$ and $\lat' \subseteq \lat$, then we write $\sigma^{*}_{|_{\lat'}}$ for the restriction of $\sigma^*$ to $\lat'$.
\end{comment}

\section{The action of isogenies on $C_0$}\label{sec:isogenies}
Let $\Gamma$ be a height $n$ formal group over a perfect field $\kappa$ of characteristic $p$. In this section we introduce three rings depending on this data: the Lubin--Tate ring $\E$, the Drinfeld ring $\D$, and the rationalized Drinfeld ring $C_0$.

The Lubin--Tate ring $\E$, which is non-canonically isomorphic to $W(\kappa)\powser{u_1, \ldots, u_{n-1}}$, first appears in the study of formal groups in \cite{lubintate}. The Lubin--Tate moduli problem associates to a complete local ring $R$ with residue field $\kappa$ the groupoid $\Def_{\Gamma}(R)$ with objects deformations of $\Gamma$ to $R$ and morphisms $\star$-isomorphisms, see \cite[Section 4]{Notes}. The main theorem of Lubin and Tate produces an equivalence of groupoids 
\[
\Spf(\E)(R) \simeq \Def_{\Gamma}(R).
\] 
The ring $\E$ carries the universal deformation $\G$ of $\Gamma$.

Let $D_k$ be the Drinfeld ring of full level-$k$ structures on $\G$, which was introduced in \cite[Section~4B]{drinfeldell1}. Note that Drinfeld omits the word ``full" as all of his level structures are full. The ring $D_k$ is a complete local $\E$-algebra that is finitely generated and free as an $\E$-module, with the property that 
\[
\Spf(D_k) \cong \Level(\qz[p^k], \G),
\]
where $\Level(\qz[p^k], \G)$ is the functor sending a complete local $\E$-algebra $R$ to the set of level structures $\Level(\qz[p^k], R \otimes \G)$. In particular, $D_k$ has been studied by homotopy theorists in \cite[Sections 6.1 and 6.2]{hkr}, \cite[Section 2.4]{isogenies}, \cite[Part 3]{ahs}, and \cite[Section~7]{subgroups}. 

For $H \subset \qz[p^k] \subset \qz$ a finite subgroup, the formal group $\G/H$ may be defined as a deformation over $D_k$ and thus is classified by a map $\E \lra{Q_H} D_k$. It follows that there is a (necessarily unique) $\star$-isomorphism
\[
D_k\tensor[^{Q_H}]{\otimes}{_{\E}} \G \cong (D_k \otimes_{\E} \G)/H.
\]
Let 
\[
\D = \Colim{k} \text{ } D_k = \Colim{k} \text{ } \Gamma \Level(\qz[p^k], \G),
\]
where the maps in the indexing diagram are induced by the precomposition
\[
(\qz[p^k] \lra{} \G) \mapsto (\qz[p^{k-1}] \hookrightarrow \qz[p^k] \lra{} \G).
\]
We define\footnote{We are treating $\D$ as an ind-object since the colimit is not a complete local ring.} $\hom(\D, R) = \lim_k \hom(D_k, R)$, an element in this set being an infinite compatible family of level structures. 

There are actions of several large groups on $\D$, see \cite[Section 1.4]{carayolltt}. There is an action of $\Aut(\qz)$ on $\D$ that plays a prominent role in the theorems in~\cite{hkr}. This action is given by precomposing a level structure $\qz \hookrightarrow \G$ with the given automorphism of $\qz$. Now let $\Isog(\qz)$ be the monoid of endoisogenies (endomorphisms with finite kernel) of $\qz$. Note that $\Isog(\qz)$ is contravariantly isomorphic to $M_{n}^{\det \neq 0}(\Z_p)$, the monoid of $n\times n$ matrices with coefficients in $\Z_p$ that have nonzero determinant. We will extend the opposite of the action of $\Aut(\qz)$ on $\D$ to an action of $\Isog(\qz)$ on $\D$. After proving this result, the authors discovered that it seems to be well-known. Because we were unable to locate a proof in the literature, we include the argument.

\begin{prop} \label{action}
The action of the group $\Aut(\qz)$ by $\E$-algebra maps on $\D$ given by precomposing a level structure with the inverse automorphism extends to an action of $\Isog(\qz)$ by ring maps. 
\end{prop}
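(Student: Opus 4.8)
The plan is to define the action of an arbitrary endoisogeny $\phi \in \Isog(\qz)$ on $\D$ directly, and then to check that it extends the given $\Aut(\qz)$-action and is compatible with composition of isogenies. Fix $\phi \colon \qz \to \qz$ with finite kernel $H$; since $H$ is finite we have $H \subset \qz[p^k]$ for some $k$, and for all $k' \geq k$ the isogeny $\phi$ restricts to a surjection $\qz[p^{k'}] \twoheadrightarrow \qz[p^{k'-v}]$ where $p^v = |H|$. Given a point of $\Spf(\D)$ over an $\E$-algebra $R$, i.e.\ a compatible family of level structures $\ell_{k'} \colon \qz[p^{k'}] \to R \otimes \G$, I want to produce a new compatible family. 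The key input is the $\star$-isomorphism $R\tensor[^{Q_H}]{\otimes}{_{\E}}\G \cong (R\otimes_\E \G)/H$ recalled above, together with the fact that a level structure $\qz[p^{k'}] \to R \otimes \G$ whose restriction to $H$ is trivial (after applying it through $\phi$, the relevant subgroup is $H$) descends along the quotient isogeny to a level structure on $(R\otimes\G)/H$. Concretely, I would send $\ell_\bullet$ to the family whose $k'$-th term is the composite $\qz[p^{k'}] \xrightarrow{\phi} \qz[p^{k'-v}] \xrightarrow{\ell_{k'-v}} R\otimes\G \twoheadrightarrow (R\otimes\G)/H \cong R\tensor[^{Q_H}]{\otimes}{_\E}\G$, re-indexed appropriately; one must check this lands in level structures (not merely homomorphisms from the $p$-torsion), that it is compatible as $k'$ varies, and that the construction is natural in $R$, hence defines a map of formal schemes $\Spf(\D) \to \Spf(\D)$ and thus an $\E$-algebra endomorphism $\phi_* \colon \D \to \D$.

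Next I would verify the two structural properties. First, functoriality: given $\phi, \phi' \in \Isog(\qz)$ with kernels $H, H'$, one checks $(\phi'\phi)_* = \phi'_* \circ \phi_*$ by unwinding the definition, using that $\ker(\phi'\phi)$ sits in an extension of $\ker\phi'$ by (the image under $\phi$ of) $\ker\phi$ and that the iterated quotient $((R\otimes\G)/H)/\phi(\ker\phi')$ is canonically $(R\otimes\G)/\ker(\phi'\phi)$ — this is the formal-group analogue of \myref{iteratedwreath}, and the associated triangle notation $\qz \xrightarrow{\phi_H} \qz/H \xrightarrow{\psi_H} \qz$ fixed in Section~\ref{sec:notation} is exactly what bookkeeps the identification of $\qz/H$ with $\qz$. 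Second, compatibility with the existing action: when $\phi \in \Aut(\qz) \subset \Isog(\qz)$ we have $H = 0$, the quotient $(R\otimes\G)/H$ is $R\otimes\G$ itself with $Q_H$ the structure map, $v = 0$, and the formula collapses to precomposition of the level structure with $\phi$ — which, because $\phi$ is now invertible, I should match against precomposition with $\phi^{-1}$ up to the convention already flagged in the text (the paper explicitly says it is extending the \emph{inverse} of the classical action). A small compatibility check is needed to see these conventions agree, but it is purely formal.

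The main obstacle is the well-definedness of the descended map as a \emph{level structure} rather than just a homomorphism out of the torsion subgroup: one must show that applying $\ell_{k'}$ and then quotienting by $H$ produces, for each $k'$, a map $\qz[p^{k'}] \to (R\otimes\G)/H$ satisfying the defining condition of a level structure (that the induced map on the associated divisor/coordinate ring is faithfully flat, in the sense of Drinfeld/Katz–Mazur). Here I expect to invoke the standard fact — implicit in the treatment of $\Level(\qz[p^k],\G)$ in \cite{hkr}, \cite{isogenies}, \cite{ahs} — that quotients of level structures by subgroups generated by part of the level data are again level structures, i.e.\ that the construction $\ell \mapsto \ell/H$ is well-behaved; this is where the finitely-generated-free hypothesis on the $D_k$ and Strickland's results on subgroups \cite{subgroups} do the work. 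Once that point is secured, the reindexing, the colimit over $k$, and the verification that $\phi_*$ is a ring homomorphism are routine, and functoriality reduces to the pullback-square combinatorics already carried out group-theoretically in Section~\ref{sec:grouptheory}.
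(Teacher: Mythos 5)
Your overall strategy --- quotient the formal group by $H$, use the $\star$-isomorphism $(R\otimes_{\E}\G)/H \cong R\tensor[^{Q_H}]{\otimes}{_{\E}}\G$ to regard the quotient as a deformation again, and assemble the resulting maps $D_k \to D_{k'}$ into an endomorphism of $\D$ --- is the paper's, but the formula you write down for the new level structure is wrong, and not merely up to convention. First, for $n>1$ the isogeny $\phi$ does not restrict to a surjection $\qz[p^{k'}]\twoheadrightarrow \qz[p^{k'-v}]$: the image $\phi(\qz[p^{k'}])\cong \qz[p^{k'}]/H$ has order $p^{nk'-v}$ and is in general not of the form $\qz[p^j]$ at all, so the composite $\qz[p^{k'}]\lra{\phi}\qz[p^{k'-v}]\lra{\ell_{k'-v}} R\otimes\G$ does not typecheck. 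Second, and more seriously, the map you propose --- precompose with $\phi$, then apply the quotient isogeny $\pi_H\colon R\otimes\G \to (R\otimes\G)/H$ --- kills $\phi^{-1}(H)$, a subgroup of order $|H|^2$, since $\ker\pi_H$ is generated by $\ell(H)$; for nontrivial $H$ it is therefore not a level structure. The construction has to run in the opposite direction: $\pi_H\circ\ell_{k'}$ kills exactly $H$ and hence \emph{descends} to a level structure $\qz[p^{k'}]/H\cong\phi(\qz[p^{k'}])\hookrightarrow (R\otimes\G)/H$, which one then restricts along the inclusion $\qz[p^k]\subseteq\phi(\qz[p^{k'}])$ (this is precisely why $k'$ must be chosen so that $\qz[p^k]\subseteq\phi(\qz[p^{k'}])$). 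In other words, the new level structure is the old one precomposed with $\psi_H^{-1}$, a branch of the inverse of $\phi$, not with $\phi$.

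This also shows that your final ``small compatibility check'' is not formal: your formula, where it makes sense, restricts on $\Aut(\qz)$ to precomposition with $\phi$ itself, whereas the proposition asks for an extension of precomposition with $\phi^{-1}$. These are genuinely different ring maps (inverse to one another), and the asymmetry is the whole point: one can always divide by $H$ on the formal-group side, but precomposition with $\phi$ on the $\qz$ side can never yield a level structure because of the kernel, so only the inverse action extends. If you replace your defining composite by ``quotient by $H$, identify $\qz[p^{k'}]/H$ with $\phi(\qz[p^{k'}])$ via $\phi$, restrict to $\qz[p^k]$,'' then the remainder of your outline (the descent fact for level structures, compatibility as $k'$ grows, the ring-map and monoid-action verifications) goes through and recovers the paper's argument.
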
  

\begin{proof}
Fix an isogeny $\phi_H \colon \qz \lra{} \qz$ with kernel $H$. We want to construct a map 
\[
\D \lra{\phi_H} \D
\] 
by mapping to a cofinal subcategory of the diagram category. For $H \subseteq \qz[p^k] \subset \qz$, fix $k' \geq k$ such that $\qz[p^k] \subseteq \phi_H(\qz[p^{k'}])$. Consider the following diagram
\[
\xymatrix{\E \ar[r]^{Q_H} \ar[d] & D_k \ar[d] & \\ D_k \ar@{-->}[r] & D_{k'} \ar[r] & R,}  
\]
where $Q_H$ is the map classifying the deformation $\G/H$. We would like to construct the dashed map. 

The map $D_{k'} \lra{} R$ is a map of $\E$-algebras for the standard $\E$-algebra structure on $D_{k'}$. Thus we have a level structure $\qz[p^{k'}] \hookrightarrow R \otimes \G$. Taking the quotient by $H$ gives the level structure
\[
\qz[p^{k'}]/H \cong \phi_H(\qz[p^{k'}]) \hookrightarrow (R\otimes \G)/H.
\]
We put a different $\E$-algebra structure on $R$ by using $Q_H$. We abuse notation and denote the composite $\E \lra{Q_H} D_{k'} \lra{} R$ by $Q_H$. Recall that we have the canonical isomorphism $(R \otimes_{\E} \G)/H \cong R \tensor[^{Q_H}]{\otimes}{_{\E}} \G$. Precomposition with the inclusion $\qz[p^{k}] \subseteq \phi_H(\qz[p^{k'}])$ gives
\[
\qz[p^{k}] \hookrightarrow R \tensor[^{Q_H}]{\otimes}{_{\E}} \G,
\]
which is classified by a map $D_k \lra{} R$ making the whole diagram commute. Now take $R = D_{k'}$. 

To check compatibility of the maps, it suffices to consider the following situation. Let $l > k$ and $l' > k'$ so that $\qz[p^l] \subset \phi_H(\qz[p^{l'}])$, then the commutative diagram
\[
\xymatrix{\qz[p^{k'}] \ar@{^{(}->}[r] \ar@{^{(}->}[d] & D_{l'}\otimes_{\E}\G \ar[d]^{=} \\ \qz[p^{l'}] \ar@{^{(}->}[r] & D_{l'} \otimes_{\E} \G }
\]
gives rise to a commutative diagram
\[
\xymatrix{\qz[p^k] \ar@{^{(}->}[r] \ar@{^{(}->}[d] & \phi_H(\qz[p^{k'}]) \ar@{^{(}->}[r] \ar@{^{(}->}[d] & (D_{l'} \otimes_{\E} \G)/H  \ar[r]^{\cong} \ar[d]^{=} & D_{l'} \tensor[^{Q_H}]{\otimes}{_{\E}} \G \ar[d]^{=} \\
\qz[p^l] \ar@{^{(}->}[r] & \phi_H(\qz[p^{l'}]) \ar@{^{(}->}[r] & (D_{l'} \otimes_{\E} \G)/H  \ar[r]^{\cong} & D_{l'} \tensor[^{Q_H}]{\otimes}{_{\E}} \G.}
\]
This implies that the diagram
\[
\xymatrix{\E \ar[r]^{Q_H} \ar[d] & D_k \ar[d] \\ D_k \ar[d] \ar@{-->}[r] & D_{k'} \ar[d] \\ D_l \ar@{-->}[r] & D_{l'}}
\]
commutes. We can now make precise the (more informal sounding) statement that $\phi_H$ sends the universal level structure $\qz \hookrightarrow \D \otimes_{\E} \G$ to the composite
\[
\qz \lra{\psi_{H}^{-1}} \qz/H \lra{} (\D \otimes_{\E} \G)/H \lra{\cong} \D \tensor[^{Q_H}]{\otimes}{_{\E}} \G.
\]

Finally we show that this action extends the action of $\Aut(\qz)$ on $\D$ by precomposition with the inverse automorphism. Indeed, by the above, the automorphism $\phi_e \in \Aut(\qz)$ sends the universal level structure $\qz \hookrightarrow \D \otimes_{\E} \G$ to the composite 
\[
\qz \lra{\phi_{e}^{-1}} \qz \lra{} \D \otimes_{\E} \G,
\]
because in this case $\psi_{e} = \phi_e$, due to the diagram
\[
\xymatrix{\qz \ar[r]^{\phi_e} \ar[dr]_-{q_e=id} & \qz  \\ & \qz/e = \qz \ar[u]_{\psi_e}.}
\]
\end{proof}

We note a corollary of the proof.

\begin{cor}
Given an endoisogeny $\phi_H$ of $\qz$ with kernel $H \subset \qz$, the following diagram commutes:
\[
\xymatrix{\qz \ar[r]^{\phi_H} \ar[dr]_{q_H} & \qz \ar@{^{(}->}[r] & \D \tensor[^{Q_H}]{\otimes}{_{\E}} \G  \\
& \qz/H \ar@{^{(}->}[r] \ar[u]_-{\psi_H} & (\D \otimes_{\E} \G)/H. \ar[u]^-{\cong}}
\]
\end{cor}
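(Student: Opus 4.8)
The plan is to read the statement off the proof of \myref{action}, where essentially all of the work has already been done; the corollary is a repackaging of that argument, so there is no new content. Recall from that proof that the ring map $\phi_H\colon \D \lra{} \D$ is built by specifying, for each $k$, a classifying map $D_k \lra{} D_{k'}$ fitting into the rectangle with $Q_H\colon \E \lra{} D_k$ classifying $\G/H$, and that the compatibility check carried out there shows these maps assemble into a well-defined endomorphism of $\D = \Colim{k} D_k$. The punchline recorded in that proof — that $\phi_H$ sends the universal level structure $\qz \hookrightarrow \D \otimes_{\E} \G$ to the composite $\qz \lra{\psi_{H}^{-1}} \qz/H \lra{} (\D \otimes_{\E} \G)/H \lra{\cong} \D \tensor[^{Q_H}]{\otimes}{_{\E}} \G$ — is, after precomposition with $\phi_H$, exactly the commutativity asserted by the corollary.

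Concretely I would proceed as follows. First, unwind the universal level structure $\iota\colon \qz \hookrightarrow \D \otimes_{\E} \G$ (the compatible family $\qz[p^k] \hookrightarrow D_k \otimes_{\E} \G$) and observe that passing to the quotient by $H$ kills $H$, so the composite $\qz \lra{\iota} \D \otimes_{\E} \G \lra{} (\D \otimes_{\E} \G)/H$ factors as $\qz \lra{q_H} \qz/H \hookrightarrow (\D \otimes_{\E} \G)/H$; this produces the bottom horizontal arrow of the diagram. Second, apply the canonical $\star$-isomorphism $(\D \otimes_{\E} \G)/H \cong \D \tensor[^{Q_H}]{\otimes}{_{\E}} \G$, which is the defining property of $Q_H$, to transport this along the right vertical arrow, yielding the top-right arrow $\qz \hookrightarrow \D \tensor[^{Q_H}]{\otimes}{_{\E}} \G$. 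Third, invoke the identification from the proof of \myref{action} for the ring map $\phi_H$ applied to $\iota$. Fourth, precompose with $\phi_H\colon \qz \lra{} \qz$ and use the factorization $\psi_H \circ q_H = \phi_H$ from Section~\ref{sec:notation} (equivalently $\psi_{H}^{-1} \circ \phi_H = q_H$) to conclude that the top composite $\qz \lra{\phi_H} \qz \hookrightarrow \D \tensor[^{Q_H}]{\otimes}{_{\E}} \G$ agrees with the bottom-then-right composite; the left-hand triangle of the diagram is just the definition of $\psi_H$.

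The closest thing to an obstacle is the bookkeeping of the two $\E$-algebra structures on $\D$ — the standard one and the one through $Q_H$ — since the formal group $\D \tensor[^{Q_H}]{\otimes}{_{\E}} \G$ and the quotient $(\D \otimes_{\E} \G)/H$ live a priori over these two different structures and are identified only via the $\star$-isomorphism. Keeping this straight, together with the compatibility with the colimit over $k$ that was already verified in the proof of \myref{action}, is all the argument requires; there is no essential difficulty beyond transcribing the relevant portion of that proof.
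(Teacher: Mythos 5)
Your proposal is correct and matches the paper exactly: the paper offers no separate argument, introducing the statement with ``We note a corollary of the proof,'' and the content is precisely the identification at the end of the proof of \myref{action} that $\phi_H$ sends the universal level structure to $\qz \lra{\psi_H^{-1}} \qz/H \lra{} (\D\otimes_{\E}\G)/H \cong \D\tensor[^{Q_H}]{\otimes}{_{\E}}\G$, combined with the defining factorization $\phi_H = \psi_H\circ q_H$. Your bookkeeping of the two $\E$-algebra structures and the colimit compatibility is the same as the paper's.
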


The ring 
\[
C_0 = p^{-1}\D = \Q \otimes \D
\] 
was introduced in \cite[Section 6.2]{hkr} (where it is called $L(E^0)$) to serve as suitable coefficients for the codomain of the generalized character map. By the previous proposition, $C_0$ is acted on by $\Isog(\qz)$. Hopkins, Kuhn, and Ravenel \cite[Proposition 6.6]{hkr} showed that $C_{0}^{\Aut(\qz)} \cong p^{-1} \E$. It seems reasonable to conjecture that $C_{0}^{\Isog(\qz)} \cong \Q_p$. The ring $C_0$ also has an algebro-geometric description as it carries the universal isomorphism (of \pdiv groups) $\qz \lra{\cong} \G$.

\begin{remark}\label{rmk:hkraction}
The action defined above in \Cref{action} is a right action of $\Isog(\qz)$ on $\D$. The monoid $\Isog(\qz)$ is dual to $M_{n}^{\det \neq 0}(\Z_p)$, which acts on $\D$ on the left. Given $x \in C_0$ and an isogeny $\phi_H$, we will write $\phi_{H}^*(x)$ for the action by the Pontryagin dual. %because it is more beautiful than the alternative. 
One further remark seems necessary. In \cite{hkr} the inverse action is defined and they write it as a left action even though it is a right action. This could cause confusion.
\end{remark}

\begin{example}
Take $n=1$ and the height $1$ formal group $\hat{\G}_m$. In this case the Drinfeld ring $D_k$ is just $\Z_p$ adjoin all primitive $p^k$th roots of unity. The isogeny $[p^k]$ acts trivially on $\D$. This can be seen in many ways. To see it using the proof above note that $Q_H$ must be the standard algebra structure since $\E \cong \Z_p$. Thus $(R \otimes \hat{\G}_m)/H \cong R \otimes \hat{\G}_m$ canonically. Now the level structure $\Lk \lra{} R \otimes \hat{\G}_m$ is precisely the restriction to $\Lk$ of the one we began with. Since every isogeny has the form $up^k$ for some unit (automorphism) $u$, we see that
\[
\D^{\Isog(\Q_p/\Z_p)}  = \D^{\Aut(\Q_p/\Z_p)} = \Z_p.
\]
\end{example} 

\begin{example} \label{ex:adams1}
In fact, the isogeny $[p^k]\colon \qz \lra{} \qz$ always acts by isomorphisms on $\D$. When $\kappa \subset \F_{p^n}$ and $\G$ is the universal deformation of the Honda formal group, $[p^k]$ acts by the identity. This is due to the fact that, in this case, there is a $\star$-isomorphism
\[
(D_k \otimes_{\E} \G)/\qz[p^k] \cong D_k \otimes_{\E} \G
\]
covering the $nk$-fold Frobenius (which is the identity) on $\kappa$. 
\begin{comment}
In fact, the isogeny $[p^k]\colon \qz \lra{} \qz$ always acts by isomorphisms on $\D$. This is due to the fact that there is a $\star$-isomorphism
\[
(D_k \otimes_{\E} \G)/\qz[p^k] \cong D_k \otimes_{\E} \G
\]
covering the $nk$-fold Frobenius on the perfect field $k$ (see Example 6.5 and Subsection 11.3 of \cite{rezkcongruence}). Thus the action of $[p^k]$ on $\D$ is determined by the unique lift of Frobenius $W(\kappa) \lra{\cong} W(\kappa)$. When $\kappa = \F_p$ this is the identity map (as in Proposition 3.6.1 of \cite{isogenies}). That is, $Q_{\qz[p^k]}$ is the standard $\E$-algebra structure on $D_k$ twisted by the $nk$-fold deformation of Frobenius.
\end{comment}
\end{example}

\begin{comment}
 Let $\D$ be the the Drinfeld ring corepresenting the set of full level structures on $\G$. It was introduced in ?? and has been studied by homotopy theorists in \cite{Isogenies}, AHS, and \cite{hkr}. Let $\qz = \Colim{k}(\Z/p^k)^n$ be the constant height $n$ \pdiv group. A full level structure is a level structure
\[
\qz \hookrightarrow \G.
\]
There is a natural isomorphism of functors on the category of complete local $\E$-algebras $\Spf(\D) \cong Level(\qz,\G)$, where $Level(\qz, \G)$ is the functor that takes $R$ to the set of full level structures on $\R \otimes \G$.
\end{comment}

\section{Multiplicative natural transformations on class functions}\label{sec:rationaltrafo}
The goal of this section is to construct a collection of operations on class functions, 
\[
\Po_{m}^{\phi}\colon Cl_n(G,C_0) \lra{} Cl_n(G\wr \Sigma_{m},C_0),
\]
that are multiplicative, non-additive, natural in the finite group $G$. These operations combine the action of isogenies on $C_0$ with the description of conjugacy classes in wreath products given in \Cref{sec:grouptheory}. In \Cref{sec:proofgeneral}, we will show that the operations constructed in this section are compatible with the total power operation in $E$-theory.

Recall that $\lat = \Z_{p}^n$ and that $\hom(\lat,G)_{/\sim}$ is the set of conjugacy classes of maps from $\lat$ to $G$ (see \Cref{sec:notation}). Given an isogeny $\phi_H \colon \qz \rightarrow \qz$ with kernel $H \subset \qz$, there is an action map $\phi_{H}^*\colon C_0 \lra{} C_0$. Also recall that $\psi_H\colon\qz/H \lra{\cong} \qz$ is the isomorphism induced by $\phi_H$ and that $\psi_{H}^*\colon \lat \lra{} \lat_H$ is the Pontryagin dual.

\begin{comment}
For a finite group $G$, consider the set of continuous maps $\hom(\Z_{p}^{n}, G)$. This set is in correspondence with the collection of ordered $n$-tuples of pairwise commuting $p$-power order elements in $G$. There is an action of $G$ on this set by conjugation. 
\end{comment}

\begin{definition}
For a finite group $G$ and a subgroup $H \subset \qz$, define $Cl_{n}^{H}(G, C_0)$ to be the set of $C_0$-valued functions on $\hom(\lat_H,G)_{/\sim}$. When $H$ is the trivial group we will abbreviate this to $Cl_n(G,C_0)$. Recall that $C_0$ depends on $n$ and that $Cl_{n}^{H}(G, C_0)$ depends on the prime $p$ even though it is not part of the notation.
\end{definition}

\begin{prop}\label{isogtrafo}
Given an isogeny $\phi_H\colon \qz \lra{} \qz$ with kernel $H$ we may produce a natural map of rings
\[
(-)^{\phi_H} \colon Cl_n(G,C_0) \lra{} Cl_{n}^H(G,C_0)
\]
defined by $f^{\phi_H}([\al]) = \phi_{H}^* f ([\al \psi_{H}^*])$.
\end{prop}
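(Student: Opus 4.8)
The plan is to verify three things: that the map is well-defined (independent of the representative $\al$ of the conjugacy class), that it lands in $Cl_n^H(G,C_0)$, and that it is a ring map natural in $G$. First I would unpack the recipe: given $[\al] \in \hom(\lat_H, G)_{/\sim}$, we need a class function value, and the formula instructs us to precompose $\al$ with $\psi_H^* \colon \lat \to \lat_H$ — wait, more carefully, $\al \in Cl_n(G,C_0)$ is a function on $\hom(\lat,G)_{/\sim}$, and given $[\beta] \in \hom(\lat_H,G)_{/\sim}$ we form $[\beta \psi_H^{*}]$ using the isomorphism $\psi_H^* \colon \lat \lra{\cong} \lat_H$ from the dual triangle in Section~\ref{sec:notation}, evaluate $f$ there, then apply $\phi_H^*$ to the resulting element of $C_0$. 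The main point to check for well-definedness is that conjugate maps $\lat_H \to G$ remain conjugate after precomposition with the fixed isomorphism $\psi_H^*$, which is immediate since precomposition with any fixed homomorphism commutes with the conjugation action of $G$ on the target. Hence $f^{\phi_H}$ is a well-defined function on $\hom(\lat_H,G)_{/\sim}$, i.e., an element of $Cl_n^H(G,C_0)$.

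Next I would check that $f \mapsto f^{\phi_H}$ is a ring homomorphism. Addition and multiplication of class functions are defined pointwise in $C_0$, so it suffices to observe that $\phi_H^* \colon C_0 \to C_0$ is a ring map — this is exactly the content of \myref{action} (extended to $C_0 = p^{-1}\D$) — and that $[\beta] \mapsto [\beta \psi_H^*]$ is a fixed reindexing of the domain. Thus $(f+g)^{\phi_H} = f^{\phi_H} + g^{\phi_H}$ and $(fg)^{\phi_H} = f^{\phi_H} g^{\phi_H}$ follow by evaluating both sides at an arbitrary $[\beta]$ and using that $\phi_H^*$ preserves sums and products; it also sends $1$ to $1$, so the unit is preserved.

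Finally, naturality in $G$: a homomorphism $\theta \colon G \to G'$ induces $\theta_* \colon \hom(\lat_H,G)_{/\sim} \to \hom(\lat_H,G')_{/\sim}$ and hence a restriction map $Cl_n^H(G',C_0) \to Cl_n^H(G,C_0)$, and similarly on $Cl_n(-,C_0)$. The square
\[
\xymatrix{Cl_n(G',C_0) \ar[r] \ar[d] & Cl_n^H(G',C_0) \ar[d] \\ Cl_n(G,C_0) \ar[r] & Cl_n^H(G,C_0)}
\]
commutes because both composites send $f$ to the function $[\beta] \mapsto \phi_H^* f([\theta \beta \psi_H^*])$, the two orders of applying $\theta_*$ and the reindexing by $\psi_H^*$ agreeing since these act on disjoint pieces of the data. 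I expect the only genuinely substantive ingredient is the appeal to \myref{action} that $\phi_H^*$ is a ring endomorphism of $\D$ (hence of $C_0$); everything else is bookkeeping with the fixed isomorphism $\psi_H^*$ and pointwise operations, so there is no real obstacle beyond being careful that $\psi_H^*$ rather than $\phi_H^*$ or $q_H^*$ is the map used to reindex conjugacy classes.
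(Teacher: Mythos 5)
Your proposal is correct and follows essentially the same route as the paper's proof: identify $\psi_H^*\colon \lat \lra{\cong} \lat_H$ as the reindexing map, invoke \myref{action} for the fact that $\phi_H^*$ is a ring endomorphism of $C_0$, and verify the ring and naturality properties by the same pointwise computations. The only difference is that you make the well-definedness on conjugacy classes explicit, which the paper leaves implicit.
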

\begin{proof}
Note that the map is well defined.

To prove naturality, let $K \lra{i} G$ be a map of finite groups. This induces restriction maps $\Res\colon Cl_{n}(G,C_0) \lra{} Cl_{n}(K,C_0)$ and $\Res^H\colon Cl_{n}^H(G,C_0) \lra{} Cl_{n}^H(K,C_0)$. Now we show that $\Res^H \phi_H = \phi_H \Res$. Let $\al\colon \lat_H \lra{} K$, then we have 
\begin{align*}
\Res^H (f^{\phi_H}) ([\al]) &= f^{\phi_H}([i \al]) \\
&= \phi_{H}^* f([i \al \psi_{H}^*])\\
&= \phi_{H}^* \Res(f)([\al \psi_{H}^*])\\
&= \Res(f)^{\phi_H}([\al]).
\end{align*}

For completeness, we show the map is a ring map. Let $f,g \in Cl_n(G,C_0)$, then
\begin{align*}
(f+g)^{\phi_H}([\al]) &= \phi_{H}^*(f+g)([\al \psi_{H}^*]) \\
&= \phi_{H}^*(f([\al \psi_{H}^*]) + g([\al \psi_{H}^*])) \\
&= \phi_{H}^*f([\al \psi_{H}^*]) + \phi_{H}^*g([\al \psi_{H}^*]) \\
&= f^{\phi_H}([\al]) + g^{\phi_H}([\al])
\end{align*}
and
\begin{align*}
(fg)^{\phi_H}([\al]) &= \phi_{H}^*(fg)([\al \psi_{H}^*])\\
&= \phi_{H}^*(f([\al \psi_{H}^*])g([\al \psi_{H}^*]))\\
&= \phi_{H}^*f([\al \psi_{H}^*])\phi_{H}^*g([\al \psi_{H}^*])\\
&= f^{\phi_H}([\al])g^{\phi_H}([\al]).
\end{align*}
\end{proof}

\begin{cor}
When restricted to $\Aut(\qz) \subset \Isog(\qz)$ the maps defined above are endomorphisms of $Cl_n(G,C_0)$. This is the inverse of the action of $GL_n(\Z_p)$ on class function described by \cite{hkr}.
\end{cor}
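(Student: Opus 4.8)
The plan is to unwind the definitions on both sides and check that they agree on the nose. Recall that for $\sigma \in \Aut(\qz) \subset \Isog(\qz)$ the kernel $H$ is trivial, so $\lat_H = \lat$, the quotient map $q_H = \id$, and the isomorphism $\psi_H \colon \qz/H = \qz \lra{} \qz$ is just $\sigma$ itself, as was observed at the end of the proof of \myref{action}. Consequently the map of \myref{isogtrafo} takes the target $Cl_n^H(G,C_0) = Cl_n(G,C_0)$ back to the original domain, so it is genuinely an endomorphism of $Cl_n(G,C_0)$, and it sends $f$ to the function $f^{\sigma}$ with
\[
f^{\sigma}([\al]) = \sigma^*\, f([\al\, (\sigma^{-1})^*]),
\]
since $\psi_H^* = \sigma^*$ and, for $\sigma$ invertible, $(\sigma^*)^{-1} = (\sigma^{-1})^*$. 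Here I am using the convention of \myref{rmk:hkraction} that $\sigma^*$ denotes the action on $C_0$ by the Pontryagin dual of $\sigma$, which on the subgroup $\Aut(\qz)$ is precisely the action extended in \myref{action}, i.e.\ precomposition of the universal level structure with $(\sigma^{-1})^* = \sigma^{-*}$.

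Next I would recall the formula for the $GL_n(\Z_p)$-action on class functions as written down in \cite{hkr}. Under the identification $\Aut(\qz) \cong GL_n(\Z_p)$ via Pontryagin duality, an element $\sigma$ acts on $Cl_n(G,C_0)$ by combining the action on the coefficient ring $C_0$ with the precomposition action on $\hom(\lat,G)_{/\sim}$, where $\Aut(\qz)$ acts on $\hom(\lat,G)$ by precomposing $\al\colon \lat \lra{} G$ with $\sigma^*\colon \lat \lra{} \lat$. Explicitly, the \cite{hkr} action sends $f$ to the function $[\al] \mapsto \sigma^* f([\al\,\sigma^*])$. Comparing with the displayed formula above, our transformation sends $f$ to $[\al] \mapsto \sigma^* f([\al\,(\sigma^{-1})^*])$, which is exactly the \cite{hkr} action of $\sigma^{-1}$. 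Hence the restriction of the $\Isog(\qz)$-action of \myref{isogtrafo} to $\Aut(\qz)$ is the inverse of the \cite{hkr} action, as claimed; one also checks it respects composition, which follows since $\phi \mapsto \phi^*$ is contravariant and the formula in \myref{isogtrafo} is built functorially from the $C_0$-action and the precomposition.

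The main (and really the only) obstacle here is bookkeeping: keeping straight the three places a variance flip can occur — Pontryagin duality $(-)^*$, passing to inverse automorphisms in the definition of the action on $\D$ in \myref{action}, and the ``left versus right'' subtlety flagged in \myref{rmk:hkraction} about how \cite{hkr} write their action. One has to pin down a single convention for each and verify the signs line up so that the two inversions in our construction (inverse automorphism on level structures, and $\psi_H = \sigma$ rather than $\sigma^{-1}$ in the source) combine to produce exactly one inversion relative to \cite{hkr}. Once the conventions are fixed this is a direct comparison of formulas with no real content, so I would present it as a short unwinding rather than a computation.
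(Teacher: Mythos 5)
Your overall strategy is the same as the paper's: note that for trivial kernel $q_e = \id$, hence $\psi_e = \phi_e = \sigma$, plug into the formula of \myref{isogtrafo}, and compare with the formula from \cite{hkr}. However, your central displayed formula is wrong. The definition reads $f^{\phi_H}([\al]) = \phi_H^* f([\al\,\psi_H^*])$, and since $\psi_e^* = \sigma^*$ this unwinds to
\[
f^{\sigma}([\al]) = \sigma^*\, f([\al\,\sigma^*]),
\]
with no inverse anywhere in the precomposition slot. Your justification for writing $[\al\,(\sigma^{-1})^*]$ instead --- ``for $\sigma$ invertible, $(\sigma^*)^{-1} = (\sigma^{-1})^*$'' --- is a true identity but a non sequitur: nowhere in \myref{isogtrafo} is an inverse of $\psi_H^*$ taken. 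This is not a harmless slip, because your subsequent identification with ``the \cite{hkr} action of $\sigma^{-1}$'' rests precisely on having $(\sigma^{-1})^*$ in that slot.

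The correct accounting of where the inversion relative to \cite{hkr} occurs is different: the precomposition $[\al] \mapsto [\al\,\sigma^*]$ on conjugacy classes is literally the same as in \cite{hkr}, and the inversion lives entirely in the coefficient action on $C_0$. By \myref{action}, $\sigma^*$ in this paper acts on $\D$ (hence $C_0$) by precomposing the universal level structure with $\sigma^{-1}$, whereas \cite{hkr} precompose with $\sigma$ itself; this, together with the left-versus-right bookkeeping, is exactly the content of \myref{rmk:hkraction} that the paper's proof invokes. You gesture at this convention in your parenthetical remark, but then you also use the symbol $\sigma^*$ with the \cite{hkr} meaning when you write down their formula, so the two sides of your comparison are not using the same conventions. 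To repair the argument, keep the formula $\sigma^* f([\al\,\sigma^*])$ and derive the ``inverse'' purely from the discrepancy between the two coefficient actions.
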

\begin{proof}
The important thing to note is that $q_e = id\colon \qz \rightarrow \qz$, so we have a triangle
\[
\xymatrix{\qz \ar[r]^{\phi_e} \ar[dr]_-{id} & \qz  \\ & \qz/e = \qz \ar[u]_{\psi_e}}
\]
factoring the identity map. Thus $\psi_e = \phi_{e}$ and \Cref{isogtrafo} produces a map
\[
Cl_n(G,C_0) \lra{} Cl_{n}(G,C_0)
\]
and
\[
f^{\phi_e}([\al]) = \phi_{e}^* f ([\al \psi_{e}^*]) = \phi_{e}^* f ([\al \phi_{e}^*]),
\]
which is the action described in \cite{hkr} after taking into account \Cref{rmk:hkraction}.
\end{proof}

\begin{remark}
We will often use the canonical isomorphism
\[
Cl_n(G,C_0) \otimes_{C_0} Cl_n(K,C_0) \lra{\cong} Cl_n(G \times K,C_0),
\]
which follows immediately from the definition of $Cl_n(-,C_0)$.
\end{remark}

Our goal is to construct a family of multiplicative natural transformations 
\[
Cl_n(-,C_0) \implies Cl_n(-\wr \Sigma_{m},C_0)
\]
for all $m\geq 0$.

Let $\Sub(\qz)$ be the set of finite subgroups of $\qz$. We may produce a finite subgroup of $\qz$ from an isogeny $\al\colon\qz \lra{} \qz$ by taking the kernel of $\al$. The induced surjection 
\[
\Isog(\qz) \lra{} \Sub(\qz)
\]
makes $\Isog(\qz)$ into a principal $\Aut(\qz)$-bundle over $\Sub(\qz)$, and we denote its set of sections by $\Gamma(\Sub(\qz), \Isog(\qz))$.

Fix a section $\phi \in \Gamma(\Sub(\qz), \Isog(\qz))$ and let $\phi_H = \phi(H)$. \Cref{sums} and \Cref{action} may be combined to construct a map 
\[
\Po_{m}^{\phi}\colon Cl_n(G,C_0) \lra{} Cl_n(G\wr \Sigma_{m},C_0).
\]
For $f \in Cl_n(G,C_0)$, we define
\begin{align*}
\Po_{m}^{\phi}(f)(\bigoplus_i(H_i,[\al_i])) &= \Prod{i}f^{\phi_{H_i}}([\al_i]) \\
&= \Prod{i} \phi_{H_i}^* f([\al_i\psi_{H_i}^*]).
\end{align*}
Let $F \colon \hom(\lat,G\wr \Sigma_m)_{/\sim} \lra{\cong} \Sum_m(\qz,G)$ be the isomorphism of \Cref{sums}. For a conjugacy class $[\al] \in \hom(\lat,G\wr \Sigma_m)_{/\sim}$, we set
\begin{align*}
\Po_{m}^{\phi}(f)([\al]) &= \Po_{m}^{\phi}(f)(F([\al])) \\
&= \Po_{m}^{\phi}(f)(\bigoplus_i(H_i, [\bar{\al}_i])).
\end{align*}

\begin{comment}
Since $Cl_n(G\wr \Sigma_{m},C_0)$ is a product, it suffices to fix a conjugacy class $\bigoplus_i(H_i,[\al_i]) \in \Sum_m(\qz,G)$ and define  
%$[\al] \in \hom(\lat,G \wr \Sigma_{m})_{/\sim}$ and define a map 
\[
Cl_n(G,C_0) \lra{[\al]} C_0.
\]
The conjugacy class $[\al]$ is equivalent to the data of
\[
F([\al]) = \bigoplus_i(H_i, [\bar{\alpha}_i]),
\]
where $\bar{\al}_i\colon \lat_{H_i} \lra{} G$.
% a sum of subgroups $\oplus_{i = 1}^{t} A_i$, where $A_i \subset \qz$ and $\sum_i |A_i| = p^k$, and for each subgroup a conjugacy class $[\beta_i] \in \hom(\lat, G)/\sim$ (in which the order of the conjugacy classes does not matter on subgroups that are equal). 
Now let $f \in Cl_n(G,C_0)$. We define
\[
Cl_n(G,C_0) \lra{[\al]} C_0
\]
by sending
\[
f \mapsto \Prod{i}f^{\phi_H}([\bar{\al}_i]).
\]
For clarity, we may rewrite the target
\[
\Prod{i}f^{\phi_H}([\bar{\al}_i]) = \Prod{i}\phi_{H}^*f([\bar{\al}_i \psi_{H}^*]).
\]
These maps fit together to give 
\[
\Po^{\phi}_{m}(G)\colon Cl_n(G,C_0) \lra{} Cl_n(G\wr \Sigma_{m},C_0).
\] 
given by the formula 
\[
\Po^{\phi}_m(f)(\bigoplus(H_i,[\al_i])) = \Prod{i}f^{\phi_H}([\al_i]).
\]
\end{comment}
\begin{prop}
Each section $\phi \in \Gamma(\Sub(\qz),\Isog(\qz))$ gives a natural multiplicative transformation
\[
\Po^{\phi}_m\colon Cl_n(-,C_0) \implies Cl_n(-\wr \Sigma_{m},C_0).
\]
\end{prop}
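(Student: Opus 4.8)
The plan is to derive both the multiplicativity and the naturality of $\Po^{\phi}_m$ from the properties of the single-isogeny transformations $f \mapsto f^{\phi_H}$ already established in \myref{isogtrafo}, exploiting that $\Po^{\phi}_m(f)$ is built componentwise along the bijection $F$ of \myref{sums}. First I would record that $\Po^{\phi}_m(f)$ is a well-defined element of $Cl_n(G\wr\Sigma_m,C_0)$: an element of $\Sum_m(\qz,G)$ is an unordered sum $\oplus_i(H_i,[\al_i])$, so the assigned value $\prod_i f^{\phi_{H_i}}([\al_i])$ does not depend on the chosen indexing, since $C_0$ is commutative; by \myref{sums} this then specifies an honest function on $\hom(\lat,G\wr\Sigma_m)_{/\sim}$.

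Multiplicativity and unitality come out by a one-line computation. Since each $f \mapsto f^{\phi_{H_i}}$ is a ring homomorphism by \myref{isogtrafo} and $C_0$ is commutative, $(fg)^{\phi_{H_i}}([\al_i]) = f^{\phi_{H_i}}([\al_i])\,g^{\phi_{H_i}}([\al_i])$, so $\Po^{\phi}_m(fg)$ and $\Po^{\phi}_m(f)\,\Po^{\phi}_m(g)$ agree on every summand of every element of $\Sum_m(\qz,G)$, and $\Po^{\phi}_m(1)$ is the constant function $1$. I would also point out that $\Po^{\phi}_m$ is \emph{not} additive --- the product over $i$ does not distribute over sums --- which is precisely the behavior one wants from a transformation meant to model a total power operation.

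The substance of the proof, and the step I expect to be the main obstacle, is naturality in $G$. A homomorphism $i\colon K \to G$ induces $i\wr\Sigma_m\colon K\wr\Sigma_m \to G\wr\Sigma_m$ and hence restriction maps on class functions, and I must show the resulting square involving $\Po^{\phi}_m$ commutes. The crux is to describe how $F$ interacts with restriction along $i\wr\Sigma_m$: I claim that if $F([\al]) = \oplus_i(H_i,[\bar{\al}_i])$ for $[\al]\in\hom(\lat,K\wr\Sigma_m)_{/\sim}$, then $F\big([(i\wr\Sigma_m)\al]\big) = \oplus_i(H_i,[i\,\bar{\al}_i])$. Proving this is the heart of the matter, and it reduces to inspecting \myref{representative}: since $\pi\circ(i\wr\Sigma_m) = \pi$, the subgroups $H_i$ are unchanged, and post-composing with $i\wr\Sigma_m$ a representative that factors each transitive component through the diagonal $G \hookrightarrow G\wr\Sigma_{p^k}$ still factors through the diagonal, now via $i\,\bar{\al}_i$. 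Granting this, the square commutes on the nose: unwinding definitions, $\Res(\Po^{\phi}_m(f))([\al]) = \prod_i f^{\phi_{H_i}}([i\,\bar{\al}_i]) = \prod_i \Res^{H_i}(f^{\phi_{H_i}})([\bar{\al}_i]) = \prod_i (\Res f)^{\phi_{H_i}}([\bar{\al}_i]) = \Po^{\phi}_m(\Res f)([\al])$, where the middle equality is the compatibility $\Res^{H}\phi_H = \phi_H\,\Res$ proved inside \myref{isogtrafo}. Thus the only genuinely new ingredient beyond \myref{isogtrafo} and \myref{sums} is the functoriality of $F$ with respect to the maps $i\wr\Sigma_m$, which I would isolate as a short lemma.
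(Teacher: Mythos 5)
Your proposal is correct and follows essentially the same route as the paper: multiplicativity is checked componentwise from \myref{isogtrafo}, and naturality reduces to the observation that for $j\colon K\to G$ the induced map on wreath products commutes with the projection to $\Sigma_m$, so $F$ sends $[(j\wr\Sigma_m)\al]$ to $\oplus_i(H_i,[j\,\bar{\al}_i])$ with the subgroups $H_i$ unchanged. Your write-up is somewhat more detailed than the paper's (which leaves the final computation to the reader after noting $j_*\oplus_i(H_i,[\al_i])=\oplus_i(H_i,j_*[\al_i])$), but the key lemma and its justification are the same.
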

\begin{proof}
Multiplicativity follows from \Cref{isogtrafo}. Indeed, for $f,g \in Cl_n(G,C_0)$ we have
\begin{align*}
\Prod{i}f^{\phi_H}([{\al}_i])\Prod{i}g^{\phi_H}([{\al}_i]) &= \Prod{i}f^{\phi_H}([{\al}_i])g^{\phi_H}([{\al}_i]) \\
&= \Prod{i}(fg)^{\phi_H}([{\al}_i]).
\end{align*}

A map of groups $K \lra{j} G$ induces a map on conjugacy classes $j_*\colon\hom(\lat, K)_{/\sim} \lra{} \hom(\lat,G)_{/\sim}$ and a restriction map $Cl_n(G,C_0) \lra{\Res} Cl_n(K,C_0)$. We show that the diagram
\[
\xymatrix{Cl_n(G,C_0) \ar[r]^-{\Po^{\phi}_m(G)} \ar[d]^{\Res} & Cl_n(G\wr \Sigma_{p^k},C_0) \ar[d]^{\Res} \\ Cl_n(K,C_0) \ar[r]^-{\Po^{\phi}_m(K)} & Cl_n(K \wr \Sigma_{p^k},C_0)}
\]
commutes. We have a commutative diagram
\[
\xymatrix{K\wr \Sigma_{p^k} \ar[r] \ar[d] & G \wr \Sigma_{p^k} \ar[d] \\ \Sigma_{p^k} \ar[r]^{=} & \Sigma_{p^k}.}
\]
Thus 
\[
j_*(\bigoplus_i (H_i, [\al_i])) = \bigoplus_i (H_i, j_*[\alpha_i]),
\]
and this implies the result.

%Finally fix $f \in Cl_n(G,C_0)$ and $(\oplus_i A_i, \{[\beta_i]\}) \in \hom(\lat,H\wr \Sigma_{p^k})/\sim$, then going around the top gives
%\[
%f \mapsto
%\]
\end{proof}

\begin{definition}\label{def:rationalpo}
We call the map 
\[
\Po^{\phi}_{m}\colon Cl_n(-,C_0) \implies Cl_n(-\wr \Sigma_{m},C_0)
\] 
the pseudo-power operation associated to $\phi \in \Gamma(\Sub(\qz), \Isog(\qz))$.
\end{definition}

\begin{remark}
It is important to note that $\Po_{1}^{\phi}$ is not necessarily the identity. It depends on the choice of automorphism $\phi_e \in \Isog(\qz)$.
\end{remark}

Note that there is an action of $\Aut(\qz)$ on $\Gamma(\Sub(\qz), \Isog(\qz))$ given by multiplication on the left. For $\gamma \in \Aut(\qz)$, $\phi \in \Gamma(\Sub(\qz), \Isog(\qz))$, and $H \in \Sub(\qz)$, we have 
\[
(\gamma \phi)_H = \gamma (\phi_H)\colon \qz \rightarrow \qz.
\]
This action is compatible with the action of $\Aut(\qz)$ on class functions.
\begin{prop}
For $\gamma \in \Aut(\qz)$ and $\phi \in \Gamma(\Sub(\qz), \Isog(\qz))$, we have
\[
\Po_{m}^{\phi}(f^{\gamma}) = \Po_{m}^{\gamma \phi}(f).
\]
\end{prop}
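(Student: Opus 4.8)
The plan is to evaluate both sides on an arbitrary $\oplus_i(H_i,[\al_i])\in\Sum_m(\qz,G)$. Since $F\colon\hom(\lat,G\wr\Sigma_m)_{/\sim}\to\Sum_m(\qz,G)$ is a bijection by \myref{sums}, two class functions in $Cl_n(G\wr\Sigma_m,C_0)$ agree once they agree on all such sums, and by the defining formula for $\Po_m^{\phi}$ it suffices to prove, for every finite subgroup $H\subset\qz$ and every $[\al]\in\hom(\lat_H,G)_{/\sim}$, the pointwise identity
\[
(f^{\gamma})^{\phi_H}([\al]) = f^{(\gamma\phi)_H}([\al]).
\]

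The structural input I would record first is how the canonical triangle of Section~\ref{sec:notation} transforms under left multiplication by $\gamma$. Because $\gamma$ is an automorphism, $(\gamma\phi)_H=\gamma\circ\phi_H$ still has kernel $H$, so its quotient map is again $q_H\colon\qz\twoheadrightarrow\qz/H$; comparing $\gamma\phi_H=\gamma\circ\psi_H\circ q_H$ with $(\gamma\phi)_H=\psi_{(\gamma\phi)_H}\circ q_H$ and cancelling the epimorphism $q_H$ gives $\psi_{(\gamma\phi)_H}=\gamma\circ\psi_H$, hence $\psi_{(\gamma\phi)_H}^{*}=\psi_H^{*}\circ\gamma^{*}\colon\lat\to\lat_H$ after dualizing. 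Similarly $(\gamma\phi)_H^{*}=\phi_H^{*}\circ\gamma^{*}$ as ring operations on $C_0$, by functoriality of the $\Isog(\qz)$-action of \myref{action}.

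Granting this, the rest is an unwinding of \myref{isogtrafo} and its corollary, the latter applied to the automorphism $\gamma$ (for which $\psi_e=\gamma$, so that $f^{\gamma}([\beta])=\gamma^{*}f([\beta\gamma^{*}])$ for any $\beta\colon\lat\to G$). On one side,
\[
f^{(\gamma\phi)_H}([\al]) = (\gamma\phi)_H^{*}\,f\bigl([\al\,\psi_{(\gamma\phi)_H}^{*}]\bigr) = \phi_H^{*}\gamma^{*}\,f\bigl([\al\,\psi_H^{*}\gamma^{*}]\bigr),
\]
and on the other,
\[
(f^{\gamma})^{\phi_H}([\al]) = \phi_H^{*}\,(f^{\gamma})\bigl([\al\,\psi_H^{*}]\bigr) = \phi_H^{*}\gamma^{*}\,f\bigl([\al\,\psi_H^{*}\gamma^{*}]\bigr).
\]
These agree, and taking the product over $i$ yields $\Po_m^{\phi}(f^{\gamma})=\Po_m^{\gamma\phi}(f)$.

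Almost all of this is bookkeeping with Pontryagin duals and the order of composition of operations on $C_0$; the only step that requires a moment's care — and the one I would spell out most carefully — is the identity $\psi_{(\gamma\phi)_H}=\gamma\psi_H$, which is precisely where the hypothesis $\gamma\in\Aut(\qz)$ enters, guaranteeing that kernels, and hence the quotient maps $q_H$, are unchanged.
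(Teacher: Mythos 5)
Your argument is correct and is essentially the paper's own proof: both reduce to the pointwise identity $(f^{\gamma})^{\phi_H}([\al]) = f^{(\gamma\phi)_H}([\al])$ on $\Sub_m(\qz,G)$ and unwind the definitions. You merely make explicit the two identities $\psi_{(\gamma\phi)_H}=\gamma\psi_H$ and $(\gamma\phi)_H^{*}=\phi_H^{*}\gamma^{*}$ that the paper leaves implicit, which is a reasonable amount of extra care rather than a different route.
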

\begin{proof}
It suffices to evaluate on conjugacy classes. By multiplicativity it suffices to check the claim on elements of $\Sub_m(\qz,G)$. Let $(H,[\al]) \in \Sub_m(\qz,G)$, then
\begin{align*}
\Po_{m}^{\phi}(f^{\gamma})(H, [\al]) &= \phi_{H}^* f^{\gamma}([\al \psi_{H}^*]) \\
&= \phi_{H}^* \gamma^* f ([\al \psi_{H}^* \gamma^*]) \\
&= \Po_{m}^{\gamma \phi}(f)(H,[\al]).
\end{align*}
\end{proof}

\section{A total power operation on class functions}\label{sec:rationalpo}
In this section we study the basic properties of the pseudo-power operations from the previous section. Most importantly, we find that if $\phi$ satisfies a certain combinatorial identity, then $\Po_{m}^{\phi}$ equips $Cl_n(-,C_0)$ with the structure of a global power functor in the sense of \cite[Section 4]{globalgreen}. We prove that there are sections $\phi$ satisfying this identity when $n=1,2$.

The functor $Cl_n(-,C_0)$ is a global Green functor in the sense of \cite[Section 3]{globalgreen}. Transfers, restrictions, and their properties are discussed in Section 3 of \cite{globalgreen} and in Theorem D of \cite{hkr}. If $K \subset G$, $f \in Cl_n(K,C_0)$, and $\al \colon \lat \lra{} G$, then the formula for the transfer is 
\[
\Tr(f)([\al]) = \sum_{gK \in (G/K)^{\im \al}}f([g \al g^{-1}]).
\]

We begin by proving an analogue of Proposition~\upperRomannumeral{8}.1.1 from \cite{bmms}.

Let $i,j \geq 0$ and consider the following maps: 
\begin{align} \label{maps}
\nabla &\colon G \wr \Sigma_i \wr \Sigma_j \lra{} G \wr \Sigma_{ij},  \\
\Delta &\colon G \lra{} G \wr \Sigma_m,\nonumber \\
\Delta_{i,j} &\colon G\wr (\Sigma_i \times\Sigma_j) \lra{} G \wr \Sigma_{i+j}, \nonumber \\
\delta &\colon (G \times K) \wr \Sigma_{m} \lra{} (G \wr \Sigma_m) \times (K \wr \Sigma_m). \nonumber
\end{align}
Let 
\[
\Tr_{i,j} \colon Cl_n(\Sigma_i \times \Sigma_j,C_0) \lra{} Cl_n(\Sigma_m,C_0)
\]
be the transfer along $\Sigma_i \times \Sigma_j \subset \Sigma_m$, where $i+j = m$.

\begin{prop}
Let $\phi \in \Gamma(\Sub(\qz),\Isog(\qz))$ and $m,l \ge 0$, then the pseudo-power operations associated to $\phi$ makes the following diagrams commute:
\begin{enumerate}
\item
\[
\xymatrix{Cl_n(G, C_0) \ar[r]^-{\Po^{\phi}_m \times \Po^{\phi}_l} \ar[d]_-{\Po^{\phi}_{m+l}} & Cl_n(G\wr \Sigma_m,C_0) \times Cl_n(G \wr \Sigma_l,C_0) \ar[d]^-{\times} \\ Cl_n(G \wr \Sigma_{m+l}, C_0) \ar[r]^-{\Delta_{m,l}^*} & Cl_n(G \wr (\Sigma_m \times \Sigma_l),C_0),}
\]
\item
\[
\xymatrix{Cl_n(G, C_0) \times Cl_n(K, C_0) \ar[r]^-{\Po^{\phi}_m \times \Po^{\phi}_m} \ar[dr]_-{\Po^{\phi}_m} & Cl_n(G \wr \Sigma_m,C_0) \times Cl_n(K \wr \Sigma_m,C_0) \ar[d]^-{\delta^*} \\ & Cl_n((G \times K) \wr \Sigma_{m}),}
\]
\item
\[
\xymatrix{Cl_n(G, C_0) \ar[r]^-{\Po_{m}^{\phi}} \ar[dr]_-{(\Po_{1}^{\phi})^{\times m}} & Cl_n(G \wr \Sigma_m,C_0) \ar[d]^-{\Delta^*} \\ & Cl_n(G^m,C_0),}
\]
\item
\[
\Po^{\phi}_m(f_1+f_2) = \Po^{\phi}_m(f_1) + \Po^{\phi}_m(f_2) + \sum_{j=1}^{m-1} \Tr_{j, m-j}(\Po^{\phi}_j(f_1) \times \Po^{\phi}_{m-j}(f_2)).
\]
\end{enumerate}
\end{prop}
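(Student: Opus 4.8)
The plan is to verify each relation by evaluating both sides on a conjugacy class and translating through the bijection $F$ of \myref{sums}; on $\Sum_{\bullet}(\qz, G)$ the transformation $\Po^{\phi}$ is by construction the product over transitive components of the maps $f \mapsto f^{\phi_{H_i}}$ from \myref{isogtrafo}, which makes (1)--(3) a matter of bookkeeping and leaves (4) as the one relation needing the transfer formula recalled above. Throughout I will use that $f \mapsto f^{\phi_H}$ is a ring map (\myref{isogtrafo}), the canonical identifications $G \wr (\Sigma_a \times \Sigma_b) \cong (G \wr \Sigma_a) \times (G \wr \Sigma_b)$ and the resulting $Cl_n(G \wr (\Sigma_a \times \Sigma_b), C_0) \cong Cl_n(G \wr \Sigma_a, C_0) \otimes_{C_0} Cl_n(G \wr \Sigma_b, C_0)$, and the elementary compatibilities of $F$ with the maps \eqref{maps}, all of which follow from the $\lat$-set picture of Section~\ref{sec:grouptheory} in the spirit of \myref{padicsum} and \myref{diagonal}.

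For (1), under $F$ a conjugacy class in $G \wr (\Sigma_m \times \Sigma_l)$ is a pair in $\Sum_m(\qz, G) \times \Sum_l(\qz, G)$ and $\Delta_{m,l}$ forgets the splitting, sending it to the concatenation in $\Sum_{m+l}(\qz, G)$; since $\Po^{\phi}_{m+l}(f)$ of a concatenation is the product of the values of $\Po^{\phi}_m(f)$ and $\Po^{\phi}_l(f)$ on the two pieces, (1) follows. For (2), a conjugacy class in $(G \times K) \wr \Sigma_m$ is a sum $\oplus_i (H_i, [\gamma_i])$ with $\gamma_i \colon \lat_{H_i} \to G \times K$, equivalently the pair of coordinates $(\gamma_i^G, \gamma_i^K)$, and $\delta$ records these componentwise; using that $\phi_H^*$ is a ring map one gets $(f \times g)^{\phi_H}([\gamma]) = f^{\phi_H}([\gamma^G])\, g^{\phi_H}([\gamma^K])$, and (2) follows after multiplying over $i$. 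For (3), a map $\lat \to G^m \hookrightarrow G \wr \Sigma_m$ has trivial image in $\Sigma_m$, so $F$ sends it to $\oplus_{i=1}^m (e, [\al_i])$ with $(\al_1, \dots, \al_m) \colon \lat \to G^m$; then $\Po^{\phi}_m(f)\big(\oplus_i (e, [\al_i])\big) = \Prod{i} f^{\phi_e}([\al_i \psi_e^*]) = \Prod{i} \Po^{\phi}_1(f)([\al_i])$, which is exactly the external power $(\Po^{\phi}_1 f)^{\times m}$.

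For (4), fix $[\al] \in \hom(\lat, G \wr \Sigma_m)_{/\sim}$ with $F([\al]) = \oplus_{i=1}^r (H_i, [\bar{\al}_i])$ and set $h_i = |H_i|$, so $\sum_i h_i = m$. Since each $f \mapsto f^{\phi_{H_i}}$ is additive,
\[
\Po^{\phi}_m(f_1 + f_2)([\al]) = \Prod{i}\big( f_1^{\phi_{H_i}}([\bar{\al}_i]) + f_2^{\phi_{H_i}}([\bar{\al}_i]) \big),
\]
which expands as $\sum_{S \subseteq \{1, \dots, r\}} \big(\Prod{i \in S} f_1^{\phi_{H_i}}([\bar{\al}_i])\big)\big(\Prod{i \notin S} f_2^{\phi_{H_i}}([\bar{\al}_i])\big)$; the $S = \{1, \dots, r\}$ and $S = \emptyset$ terms are $\Po^{\phi}_m(f_1)([\al])$ and $\Po^{\phi}_m(f_2)([\al])$. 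It remains to match the other terms with the transfer sum. I would apply $\Tr(h)([\al]) = \sum_{gK \in (G/K)^{\im \al}} h([g \al g^{-1}])$ with $G = G \wr \Sigma_m$ and $K = G \wr (\Sigma_j \times \Sigma_{m-j}) \cong (G \wr \Sigma_j) \times (G \wr \Sigma_{m-j})$, under which $\Po^{\phi}_j(f_1) \times \Po^{\phi}_{m-j}(f_2)$ is the external product class function on $K$. Here $(G \wr \Sigma_m)/K \cong \Sigma_m/(\Sigma_j \times \Sigma_{m-j})$ is the set of $j$-element subsets of $\{1, \dots, m\}$, an $\im \al$-fixed coset is a $j$-subset that is a union of $\im \pi \al$-orbits, hence a union of transitive components of $\al$, and such unions are in bijection with the subsets $S \subseteq \{1, \dots, r\}$ with $\sum_{i \in S} h_i = j$. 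For such an $S$, after conjugating so the chosen subset is $\{1, \dots, j\}$, restricting $g \al g^{-1}$ to $K$ gives by \myref{sums} applied to each factor the pair $\big(\oplus_{i \in S}(H_i, [\bar{\al}_i]),\, \oplus_{i \notin S}(H_i, [\bar{\al}_i])\big)$, so the corresponding summand of $\Tr_{j, m-j}(\Po^{\phi}_j(f_1) \times \Po^{\phi}_{m-j}(f_2))([\al])$ is $\big(\Prod{i \in S} f_1^{\phi_{H_i}}([\bar{\al}_i])\big)\big(\Prod{i \notin S} f_2^{\phi_{H_i}}([\bar{\al}_i])\big)$. Summing over $1 \le j \le m-1$ and the matching $S$ yields exactly the terms with $\emptyset \neq S \subsetneq \{1, \dots, r\}$, completing the identity.

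The main obstacle is the bookkeeping in (4): one must check that the $\im \al$-fixed cosets are enumerated exactly once by the subsets $S$ — which uses that a fixed $j$-subset is literally a union of transitive components, with no further identifications — and that the restriction of $g \al g^{-1}$ to $K$ splits as the stated pair in $\Sum_j(\qz, G) \times \Sum_{m-j}(\qz, G)$, which is where \myref{sums} (equivalently \myref{subs} componentwise) and the identification of $\Po^{\phi}_j(f_1) \times \Po^{\phi}_{m-j}(f_2)$ as an external product are needed. Relations (1)--(3) carry no comparable subtlety once the compatibilities of $F$ with \eqref{maps} are in hand.
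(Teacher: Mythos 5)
Your proof is correct and follows essentially the same route as the paper: evaluate everything on conjugacy classes via the bijection $F$, note that (1)--(3) are bookkeeping with the compatibilities of $F$, and for (4) expand the product $\Prod{i}\bigl(f_1^{\phi_{H_i}}+f_2^{\phi_{H_i}}\bigr)$ and match the cross terms with the subsets $T$ indexing the $\im\al$-fixed cosets in the transfer formula. The only difference is that you spell out the ``standard argument'' identifying those fixed cosets with unions of transitive components, which the paper leaves implicit.
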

\begin{proof}
These are all standard. We elaborate on the last formula. 

Let $\bigoplus_i(H_i, [\al_i]) \in \Sum_{m}(\qz,G)$. Note that
\[
\Po^{\phi}_m(f_1+f_2)(\bigoplus_i(H_i, [\al_i])) = \Prod{i}(\phi_{H_i}^*f_1([\al \psi_{H_i}^*]) + \phi_{H_i}^*f_2([\al \psi_{H_i}^*]))
\]
and expand this into a sum (without simplifying). Assume that the sum of subgroups is indexed by $S$ so that
\[
\bigoplus_iH_i = \Oplus{i \in S} H_i.
\]
The formula is completely combinatorial. Let 
\[
Z_j = \{T \subset S|\sum_{i \in T} |H_i| = j\}. 
\]
By a standard argument the transfer equals
\[
\Tr_{j, m-j}(\Po^{\phi}_j(f_1) \times \Po^{\phi}_{m-j}(f_2))(\Oplus{i \in S} (H_i,[\al_i])) = \sum_{T \in Z_j} \Po^{\phi}_j(f_1)(\Oplus{i \in T}(H_i, [\al_i])) \Po^{\phi}_{m-j}(f_2)(\Oplus{i \in S \setminus T}(H_i, [\al_i])). 
\]
As $j$ varies this hits exactly the summands from the expanded sum.
\begin{comment}
Thus the coefficient of
\[
\Prod{i<l} \phi_{H_i}^* f_1([\al \psi_{H_i}^*]) \Prod{i \geq l} \phi_{H_i}^* f_2([\al \psi_{H_i}^*])
\]
is the number of distinct ways of partition the subgroups into a sum of subgroups whose sum of sizes is 
\end{comment}
\end{proof}

Let $J \subset Cl_n(G \wr \Sigma_{p^k},C_0)$ be the ideal generated by the image of the transfer along the maps
\[
G \wr (\Sigma_i \times \Sigma_j) \lra{} G \wr \Sigma_{p^k}
\]
for $i,j > 0$ and $i+j = p^k$. In the special case that $G=e$, we will let $I \subset Cl_n(\Sigma_{p^k}, C_0)$ be the ideal generated by the transfer along the maps $\Sigma_i \times \Sigma_j \subset \Sigma_{p^k}$.

We now give a description of $J$ in the spirit of \Cref{sec:grouptheory}.
\begin{prop} \label{transferideal}
The ideal $J$ consists of the factors of 
\[
Cl_n(G\wr \Sigma_{p^k},C_0) = \Prod{\Sum_{p^k}(\qz,G)}C_0
\] 
corresponding to the sums with more than one summand.
\end{prop}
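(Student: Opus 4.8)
The plan is to establish the two containments separately, using throughout the identification $Cl_n(G\wr\Sigma_{p^k},C_0)=\Prod{\Sum_{p^k}(\qz,G)}C_0$ of \myref{sums}. Since $\Sum_{p^k}(\qz,G)$ is finite, this is a \emph{finite} product of copies of $C_0$; write $e_s$ for the idempotent supported at $s$, let $D\subseteq\Sum_{p^k}(\qz,G)$ be the subset of sums with at least two summands, and note that $\Prod{s\in D}C_0=\sum_{s\in D}C_0e_s$ is exactly the ideal that the proposition claims equals $J$.

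For the containment $J\subseteq\Prod{s\in D}C_0$, since the right-hand side is already an ideal it suffices to show that the image of each generating transfer lies in it, i.e.\ that $\Tr(f)$ vanishes on every transitive conjugacy class $[\al]$, where $f\in Cl_n(G\wr(\Sigma_i\times\Sigma_j),C_0)$ and $i,j>0$, $i+j=p^k$. First I would apply the transfer formula recalled above: $\Tr(f)([\al])$ is a sum indexed by the cosets of $G\wr(\Sigma_i\times\Sigma_j)$ fixed by $\im\al$. Such a fixed coset would exhibit a conjugate of $\al$ factoring through $G\wr(\Sigma_i\times\Sigma_j)$; pushing this down along $\pi$ would place a conjugate of the transitive subgroup $\im(\pi\al)\subseteq\Sigma_{p^k}$ inside $\Sigma_i\times\Sigma_j$, which is impossible because $\Sigma_i\times\Sigma_j$ acts on $\pk$ with two orbits. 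Hence the indexing set is empty, $\Tr(f)([\al])=0$, and the containment follows.

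For the reverse containment it is enough to produce $e_s\in J$ for each $s=\oplus_{l\in S}(H_l,[\al_l])\in D$. I would choose a nonempty proper subset $T_0\subsetneq S$, set $i=\sum_{l\in T_0}|H_l|$ and $j=p^k-i$ (so $i,j>0$), and let $s_1\in\Sum_i(\qz,G)$ and $s_2\in\Sum_j(\qz,G)$ be the corresponding sub-sums. Using the identification $Cl_n(G\wr(\Sigma_i\times\Sigma_j),C_0)=\Prod{\Sum_i(\qz,G)\times\Sum_j(\qz,G)}C_0$, I would transfer the idempotent $e_{(s_1,s_2)}$ along $\Delta_{i,j}$; this element lies in $J$ by definition. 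The same double-coset formula for $\Tr_{i,j}(g_1\times g_2)$ used in the proof of relation~(4) of the previous proposition, now evaluated on $g_1=e_{s_1}$, $g_2=e_{s_2}$, computes $\Tr(e_{(s_1,s_2)})$ at a class $s'$ as the number of ways of partitioning the canonical multiset of transitive summands of $s'$ into a sub-multiset representing $s_1$ and a sub-multiset representing $s_2$. Any such partition forces $s'=s_1\oplus s_2=s$, and for $s'=s$ there is at least one (namely $T_0$), so $\Tr(e_{(s_1,s_2)})=N\,e_s$ for some integer $N\geq 1$. Since $\Q_p\subseteq C_0=p^{-1}\D$, the integer $N$ is invertible, whence $e_s=N^{-1}\Tr(e_{(s_1,s_2)})\in J$.

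The only genuinely non-formal point is pinning down that double-coset description of the transfer along $\Delta_{i,j}\colon G\wr(\Sigma_i\times\Sigma_j)\hookrightarrow G\wr\Sigma_{p^k}$ and checking that, evaluated on an idempotent, it returns a nonnegative-integral multiple of a single idempotent. This amounts to bookkeeping with the bijection $F$ of Section~\ref{sec:grouptheory} and the orbit decomposition of the $\lat$-set attached to $s'$, and it is already carried out in its additive form in the preceding proof; everything afterwards is formal, relying only on the finiteness of $\Sum_{p^k}(\qz,G)$ and on $C_0$ being a $\Q$-algebra.
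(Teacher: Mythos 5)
Your proof is correct and follows essentially the same route as the paper's: the transfer formula identifies fixed cosets with lifts through $G\wr(\Sigma_i\times\Sigma_j)$, which forces more than one summand, and conversely transferring the characteristic function of a lift generates the corresponding factor. You merely spell out two points the paper leaves implicit — the orbit-counting reason why no transitive class admits a lift, and the invertibility of the multiplicity $N$ in the $\Q_p$-algebra $C_0$.
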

\begin{proof}
This follows immediately from the formula for the transfer. If there is a lift
\[
\xymatrix{& G \wr (\Sigma_{i} \times \Sigma_j) \ar[d] \\ \lat \ar[r]^-{\al} \ar@{-->}[ur] & G\wr \Sigma_{p^k}}
\]
up to conjugacy, then $F([\pi \al])$ must have more than one subgroup. Now the transfer of the characteristic class function concentrated on the lift is a generator of the factor corresponding to $[\al]$. 
\end{proof}

\begin{remark}
We could define $J$ for $m \neq p^k$, but the previous proposition shows that, in this case, $J = Cl_n(G\wr \Sigma_m,C_0)$.
\end{remark}

We define some simplifications of $\Po_{m}^{\phi}$. Consider
\[
\Po_{p^k}^{\phi}/J\colon Cl_n(G, C_0) \lra{} Cl_n(G\wr \Sigma_{p^k},C_0)/J,
\]
\[
P^{\phi}_{m} = \Delta^* \Po_{m}^{\phi} \colon Cl_n(G, C_0) \lra{} Cl_n(G,C_0) \otimes_{C_0} Cl_n(\Sigma_{m},C_0),
\]
where $\Delta \colon G \times \Sigma_{p^k} \rightarrow G \wr \Sigma_{p^k}$, and
\[
P^{\phi}_{p^k}/I\colon Cl_n(G, C_0) \lra{} Cl_n(G,C_0) \otimes_{C_0} Cl_n(\Sigma_{p^k},C_0)/I.
\]

\begin{cor}
The maps $\Po_{p^k}^{\phi}/J$ and $P_{p^k}^{\phi}/I$ are both ring maps.
\end{cor}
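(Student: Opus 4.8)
The plan is to use that both $\Po^{\phi}_{p^k}$ and $P^{\phi}$ are already multiplicative and unital, and that $J$ and $I$ are ideals by construction (with \myref{transferideal} describing $J$ explicitly), so that the quotient maps $Cl_n(G\wr\Sigma_{p^k},C_0)\twoheadrightarrow Cl_n(G\wr\Sigma_{p^k},C_0)/J$ and $Cl_n(G,C_0)\otimes_{C_0}Cl_n(\Sigma_{p^k},C_0)\twoheadrightarrow \bigl(Cl_n(G,C_0)\otimes_{C_0}Cl_n(\Sigma_{p^k},C_0)\bigr)/I$ are ring homomorphisms. Unitality of $\Po^{\phi}_{p^k}$ is immediate since each $\phi_H^*$ is a ring map, so $\Po^{\phi}_{p^k}(1)=1$. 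Hence the only thing left to verify is that $\Po^{\phi}/J$ and $P^{\phi}/I$ are \emph{additive}.

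For $\Po^{\phi}/J$ I would read off additivity directly from relation~(4). Specialising $m=p^k$ gives
\[
\Po^{\phi}_{p^k}(f_1+f_2) - \Po^{\phi}_{p^k}(f_1) - \Po^{\phi}_{p^k}(f_2) = \sum_{j=1}^{p^k-1} \Tr_{j,\,p^k-j}\bigl(\Po^{\phi}_j(f_1)\times\Po^{\phi}_{p^k-j}(f_2)\bigr),
\]
and for each $1\le j\le p^k-1$ the argument of $\Tr_{j,p^k-j}$ lies in $Cl_n(G\wr\Sigma_j,C_0)\otimes_{C_0}Cl_n(G\wr\Sigma_{p^k-j},C_0)\cong Cl_n\bigl(G\wr(\Sigma_j\times\Sigma_{p^k-j}),C_0\bigr)$, on which $\Tr_{j,p^k-j}$ is precisely the transfer along $\Delta_{j,p^k-j}$ of~\eqref{maps}. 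Since $J$ is generated by exactly these transfers (for $i,j>0$, $i+j=p^k$), the right-hand side lies in $J$; thus $\Po^{\phi}/J$ is additive, hence a ring map.

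For $P^{\phi}/I$ I would reduce to the previous point. Write $j\colon G\times\Sigma_{p^k}\to G\wr\Sigma_{p^k}$, $(g,\sigma)\mapsto(g,\dots,g;\sigma)$, for the homomorphism along which $P^{\phi}$ is obtained from $\Po^{\phi}_{p^k}$ by restriction, and let $j^*$ denote the induced restriction on class functions, so $P^{\phi}=j^*\circ\Po^{\phi}_{p^k}$. It suffices to prove $j^*(J)\subseteq Cl_n(G,C_0)\otimes_{C_0}I$: then $j^*$ descends to a ring map $Cl_n(G\wr\Sigma_{p^k},C_0)/J\to\bigl(Cl_n(G,C_0)\otimes_{C_0}Cl_n(\Sigma_{p^k},C_0)\bigr)/I$, and $P^{\phi}/I$ is its composite with $\Po^{\phi}/J$. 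Since $j^*$ is a ring map and $Cl_n(G,C_0)\otimes_{C_0}I$ is an ideal, it is enough to check the inclusion on the transfer generators of $J$, i.e.\ to compute $j^*\circ\Tr_{\Delta_{i,j}}$ for $i+j=p^k$ with $i,j>0$. Here I would invoke the double-coset (Mackey) formula for the global Green functor $Cl_n(-,C_0)$: the double cosets $j(G\times\Sigma_{p^k})\backslash(G\wr\Sigma_{p^k})/\Delta_{i,j}\bigl(G\wr(\Sigma_i\times\Sigma_j)\bigr)$ collapse to $\Sigma_{p^k}\backslash\Sigma_{p^k}/(\Sigma_i\times\Sigma_j)$, a single point, with intersection subgroup $j\bigl(G\times(\Sigma_i\times\Sigma_j)\bigr)$; hence $j^*\circ\Tr_{\Delta_{i,j}}$ is a restriction followed by the transfer along $G\times(\Sigma_i\times\Sigma_j)\subseteq G\times\Sigma_{p^k}$, which is $\id\otimes\Tr_{\Sigma_i\times\Sigma_j\subseteq\Sigma_{p^k}}$, whose image is $Cl_n(G,C_0)\otimes_{C_0}I$ by the very definition of $I$.

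The routine ingredients (multiplicativity, unitality, relation~(4), and \myref{transferideal}) are in hand, so I expect the only genuine obstacle to be the Mackey computation in the last step: verifying that the relevant double-coset space is a single point and correctly pinning down the intersection subgroup $j\bigl(G\times(\Sigma_i\times\Sigma_j)\bigr)$, which is exactly what forces the restriction of a transfer along $\Delta_{i,j}$ back into the transfer ideal $I$. This is standard but should be carried out with care.
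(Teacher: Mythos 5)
Your proof is correct, but it takes a genuinely different route from the one the paper intends. The paper treats the corollary as immediate from \myref{transferideal}: killing $J$ leaves exactly the factors of $\Prod{\Sub_{p^k}(\qz,G)}C_0$ indexed by \emph{single} pairs $(H,[\al])$, on which $\Po^{\phi}/J$ is given coordinatewise by $f \mapsto f^{\phi_H}([\al]) = \phi_{H}^*f([\al\psi_H^*])$ --- a composite of evaluation at a conjugacy class with the ring map $\phi_H^*$, hence a ring map by \myref{isogtrafo}; the products over several summands, which are the only source of non-additivity, live precisely in the factors that $J$ removes, and the same coordinatewise reading (via \myref{diagonal}, as in the Example following the corollary) handles $P^{\phi}/I$. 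You instead argue as one would in $E$-theory, where no such coordinatewise description is available: multiplicativity plus relation (4), noting that the cross terms are transfers along $G\wr(\Sigma_j\times\Sigma_{p^k-j})$ which generate $J$ by definition, and then a Mackey double-coset computation to push $j^*$ of those transfers into $Cl_n(G,C_0)\otimes_{C_0}I$. Your Mackey step is sound: since $G\wr(\Sigma_i\times\Sigma_j)$ contains the normal subgroup $G^{p^k}$ and $\Delta(G\times\Sigma_{p^k})$ surjects onto $\Sigma_{p^k}$, there is a single double coset with intersection $\Delta(G\times(\Sigma_i\times\Sigma_j))$, and the double-coset formula is available because $Cl_n(-,C_0)$ is a global Green functor (or can be checked by hand from the explicit transfer formula). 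The trade-off: your argument is more portable (it is exactly the shape of the topological proof that $\Po/J$ and $P/I$ are ring maps) but requires the Mackey bookkeeping; the paper's argument is shorter because class functions are completely explicit.
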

\begin{proof}
This follows immediately from the definition of $\Po^{\phi}_m$ and \cref{transferideal}.
\end{proof}

\begin{remark}\label{drinfeldringproducts}
By construction, the pseudo-power operation $\Po_{m}^{\phi}$ restricts to a map
\[
Cl_n(G,\D) \rightarrow Cl_n(G \wr \Sigma_m, \D).
\]
Instead of defining $\Po_{p^k}^{\phi}/J$ and $P_{p^k}^{\phi}/I$ as above, we could have defined them as the rationalization of a map with domain $Cl_n(G, \D)$. This is not true of $\Po_{m}^{\phi}$ as it is not additive.
\end{remark}

\begin{example}
Let $(H,[\al]) \in \Sub_{p^k}(\qz,G)$. We compute the effect of $P^{\phi}_{p^k}/I$ on $Cl_n(G,C_0)$. Using \Cref{diagonal} we see that
\begin{align*}
(P_{p^k}^{\phi}/I)(f)(H,[\al]) &= \Po_{p^k}^{\phi}(f)(H,[\al q_{H}^*]) \\
&= f^{\phi_H}([\al q_{H}^*]) \\
&= \phi_{H}^* f([\al q_{H}^* \psi_{H}^*]) \\
&= \phi_{H}^* f([\al \phi_{H}^*]). 
\end{align*}
\end{example}

\begin{example} \label{adamsops}
When $m = p^{kn}$, there is a further natural simplification of $\Po^{\phi}_m$. If we choose $\phi$ to take $\qz[p^k]$ to $[p^k]\colon \qz \lra{} \qz$, then we may compose $P^{\phi}_{p^{kn}}/I$ further with the map
\[
Cl_n(\Sigma_{p^{kn}},C_0)/I \lra{} C_0
\]
that projects onto the factor corresponding to $\qz[p^k]$. This gives a map
\[
\psi^{p^k}\colon Cl_n(G,C_0) \lra{} Cl_n(G,C_0)
\]
known as the Adams operation corresponding to $\qz[p^k]$. The formula for it can be derived from the previous example:
\[
(P^{\phi}_{p^{kn}}/I)(f)(\qz[p^k],[\al]) = \phi_{\qz[p^k]}^*f([\al \phi_{\qz[p^k]}^*]).
\]
When $\kappa \subset \F_{p^n}$ and $\G$ is the universal deformation of the Honda formal group (following \cref{ex:adams1}), this simplifies to give
\[
f([\al \phi_{\qz[p^k]}^*]).
\]
This map is induced by the map on conjugacy classes $\hom(\lat, G)_{/\sim} \lra{} \hom(\lat, G)_{/\sim}$ sending
\[
[\al\colon\lat \rightarrow G] \mapsto [\lat \lra{p^k} \lat \lra{\al} G],
\]
recovering the usual formula for the Adams operations on class functions when $n=1$. 
\end{example}

We see that we have analogues of all of the parts of Proposition~\upperRomannumeral{8}.1.1 from \cite{bmms} except Part (ii), which is the fundamental relation that a global power functor satisfies (see Section 4 in \cite{globalgreen} as well). Now we classify the sections $\phi$ that give rise to a pseudo-power operation that does satisfy this relation.

\begin{definition}
A section $\phi \in \Gamma(\Sub(\qz), \Isog(\qz))$ is a \emph{power section} if for all $H \subset T \subset \qz$
\[
\phi_T = \phi_{T/H}\phi_H, 
\]
where $T/H = \phi_H(T)$.
\end{definition}

\begin{prop} \label{powersecthm}
The pseudo-power operation $\Po_{m}^{\phi}$ is a global power functor if and only if $\phi$ is a power section.
\end{prop}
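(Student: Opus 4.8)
The plan is to unwind the definition of a global power functor from \cite{globalgreen} and match each axiom against the structural properties we have already collected. By construction $\Po^{\phi}$ is multiplicative and natural, parts (1)--(4) of the previous proposition supply the compatibility with $\Delta_{i,j}$, $\delta$, the diagonal, and the additive formula, and \myref{transferideal} identifies the transfer ideal $J$. The only axiom of a global power functor not yet verified is the analogue of part (ii) of Proposition~\upperRomannumeral{8}.1.1 of \cite{bmms}, namely the compatibility of iterated power operations with $\nabla\colon G\wr\Sigma_i\wr\Sigma_j \lra{} G\wr\Sigma_{ij}$; equivalently the identity $\nabla^* \circ \Po^{\phi}_{j} \circ \Po^{\phi}_i = \Po^{\phi}_{ij}$ (possibly up to the transfer ideal $J$, according to which form of the axiom is used). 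So the proof reduces to showing that this single relation holds if and only if $\phi$ is a power section.

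First I would set up the computation of both sides of the $\nabla$-relation on a conjugacy class, using \myref{iteratedwreath} to describe the effect of $\nabla$ on $\Sum(\qz,G)$: a class $\oplus_i(H_i, \oplus_j(K_j,[\overline{(\al_i)}_j]))$ in $\Sum_{p^t}(\qz, G\wr\Sigma_{p^j})$ maps to $\oplus_{i,j}(T_{i,j},[\overline{(\al_i)}_j])$ with $T_{i,j}$ the pullback of $K_j \lra{} \qz/H_i \lla{} \qz$. By multiplicativity it suffices to treat a single transitive summand, i.e.\ to work with a single subgroup $H\subset\qz$ and then a single subgroup $K\subset\qz/H$; the relevant $T\subset\qz$ is the kernel of $\qz \lra{q_H} \qz/H \lra{} (\qz/H)/K$, so that $\lat_T \subset \lat_H$ and, identifying $\qz/H\cong\qz$ via $\psi_H$, we have $K$ corresponding to a subgroup of $\qz$ whose preimage relation encodes $T$. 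Evaluating $\Po^{\phi}_{p^t}(\Po^{\phi}_{p^j}(f))$ on such a class produces, after applying \myref{isogtrafo} twice, the element $\phi_{K}^*\,\phi_H^*\, f\bigl([\,\al\,\psi_H^*\,\psi_K^*\,]\bigr)$ (with the appropriate restrictions of $\psi$'s), while $\Po^{\phi}_{p^{j+t}}(f)$ evaluated on $\nabla$ of the class gives $\phi_T^*\, f\bigl([\,\al\,\psi_T^*\,]\bigr)$. The dual-triangle bookkeeping of Section~\ref{sec:notation} shows the two level-structure arguments agree: $\psi_T^* = \psi_H^*\psi_K^*$ under the identifications, since composing the isomorphisms $\psi$ is exactly transitivity of the pullback diagram.

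Hence the two sides agree for all $f$ and all classes precisely when $\phi_T^* = \phi_K^*\,\phi_H^*$ as endomorphisms of $C_0$ for every such triple, i.e.\ when $\phi_T = \phi_{T/H}\,\phi_H$ after translating back through Pontryagin duality and \myref{rmk:hkraction} — which is exactly the power-section condition. For the ``only if'' direction I would run the same computation with $G$ trivial and $f$ a characteristic class function supported on a single conjugacy class, where the two evaluations reduce to comparing the single ring elements $\phi_T^*(x)$ and $\phi_K^*\phi_H^*(x)$ for a suitably generic $x\in C_0$; since the universal isomorphism $\qz\lra{\cong}\G$ over $C_0$ makes the $\Isog(\qz)$-action faithful enough to detect the distinction (the isogenies act by genuinely different ring maps when the composite does not match), failure of $\phi_T=\phi_{T/H}\phi_H$ forces the $\nabla$-relation to fail.

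The main obstacle I anticipate is purely notational rather than conceptual: carefully tracking the nested Pontryagin duals and the identifications $\psi_H$, $\psi_K$, $\psi_T$ through \myref{iteratedwreath} so that the two level-structure arguments are literally equal, and making sure the correct variance (left vs.\ right action, cf.\ \myref{rmk:hkraction}) is used throughout, so that ``$\phi_T = \phi_{T/H}\phi_H$'' dualizes to ``$\phi_T^* = \phi_K^*\phi_H^*$'' and not the reversed composite. Once the dictionary between the pullback square defining $T$ and the composite of the $\psi$'s is pinned down, both implications fall out of multiplicativity and \myref{isogtrafo} with no further input.
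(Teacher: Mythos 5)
Your overall strategy is the same as the paper's: reduce the global power functor axioms to the single $\nabla$-compatibility $\nabla^*\Po^{\phi}_{p^k} = \Po^{\phi}_{p^t}\Po^{\phi}_{p^l}$, use \myref{iteratedwreath} to translate both sides to evaluations on a class $\oplus_{i,j}(T_{i,j},[\bar{(\al_i)}_j])$ with $T_{i,j}$ the pullback, and compare via \myref{isogtrafo}; the paper's proof is exactly this computation. However, one intermediate claim as you state it is false: the identity $\psi_T^* = \psi_H^*\psi_{T/H}^*$ (your ``$\psi_T^*=\psi_H^*\psi_K^*$'') is \emph{not} automatic from ``transitivity of the pullback diagram.'' The isomorphism $\psi_T$ is defined by the factorization $\phi_T = \psi_T\circ q_T$, so it depends on the section's value $\phi_T$ at $T$, whereas $\psi_H$ and $\psi_{T/H}$ depend on its values at $H$ and at $T/H=\phi_H(T)=\psi_H(K)$. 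If $\phi$ is not a power section then $\phi_{T/H}\phi_H=\gamma\phi_T$ for a nontrivial $\gamma\in\Aut(\qz)$, and \emph{both} the coefficient action ($\phi_T^*$ versus $\phi_H^*\phi_{T/H}^*$) \emph{and} the level-structure argument of $f$ ($\psi_T^*$ versus $\psi_H^*\psi_{T/H}^*$) get twisted by $\gamma$. So the power-section hypothesis is used twice in the ``if'' direction --- once for each piece, as in the paper's chain of equalities --- rather than only in the coefficients as your accounting suggests.

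This misattribution does not overturn the biconditional: in the ``if'' direction the hypothesis supplies both identities at once, and in the ``only if'' direction the discrepancy is a twist of the evaluation by a nontrivial $\gamma\in\Aut(\qz)$, which is detected on class functions (e.g.\ already for $G=e$ using that $\Aut(\qz)$ acts faithfully on $C_0$, since $C_0^{\Aut(\qz)}=p^{-1}\E^0\neq C_0$ --- this is the precise content behind your appeal to the action being ``faithful enough''). But you should repair the statement: either derive $\psi_T^*=\psi_H^*|_{\lat_{\psi_H(K)}}\psi_{T/H}^*$ \emph{from} the power-section identity by dualizing $\phi_T=\phi_{T/H}\phi_H$ through the two factorization triangles, or phrase the matching condition as the single identity $\phi_T=\phi_{T/H}\phi_H$ of isogenies controlling both discrepancies simultaneously.
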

\begin{proof}
Assume $\phi$ is a power section. Let $st = m$, we must show that
\[
\xymatrix{Cl_n(G,C_0) \ar[r]^-{\Po^{\phi}_{m}} \ar[d]^-{\Po^{\phi}_{s}} & Cl_n(G \wr \Sigma_{m},C_0) \ar[d]^-{\nabla^*} \\ Cl_n(G \wr \Sigma_{s},C_0) \ar[r]^-{\Po^{\phi}_{t}} & Cl_n(G \wr \Sigma_{s}\wr\Sigma_{t},C_0)}
\]
commutes. 

Recall that $T_{i,j}$ fits into the following commutative diagram
\[
\xymatrix{T_{i,j} \ar[d] \ar[r] & \qz \ar[d] \ar[r]^{\phi_{H_i}} & \qz \\ K_{i,j} \ar[r] & \qz/H_i. \ar[ur]_{\psi_{H_i}}}  
\]
Thus we have an induced map
\[
\psi_{H_i}/K_{i,j} \colon \qz/T_{i,j} \rightarrow \qz/\psi_{H_i}(K_{i,j}).
\]
This fits into the commutative diagram
\begin{equation} \label{bigdiagram}
\xymatrix{\qz \ar[d]^-{q_{H_i}} \ar[dr]^-{\phi_{H_i}} & & \\ \qz/H_i \ar[r]^-{\psi_{H_i}} \ar[d] & \qz \ar[d]_-{q_{\psi_{H_i}(K_{i,j})}} \ar[dr]^-{\phi_{T_{i,j}/H_i} = \phi_{\psi_{H_i}(K_{i,j})}} & \\ \qz/T_{i,j} \ar[r]_-{\psi_{H_i}/K_{i,j}} & \qz/\psi_{H_i}(K_{i,j}) \ar[r]_-{\psi_{T_{i,j}/H_i}} & \qz.}
\end{equation}
Since $\phi$ is a power section, we have the relation
\[
\phi_{T_{i,j}/H_i} \phi_{H_i} = \phi_{T_{i,j}}.
\]
The composite of the left-hand vertical maps is $q_{T_{i,j}}$. Thus we have an equality
\[
\psi_{T_{i,j}/H_i} \psi_{H_i}/K_{i,j} = \psi_{T_{i,j}}.
\]
Applying Pontryagin duality to this equality gives
\[
\psi_{T_{i,j}}^{*} = \psi_{H_i}^*|_{\lat_{\psi_{H_i}(K_{i,j})}} \psi_{T_{i,j}/H_i}^*.
\]
This relation is used in going from line 6 to line 7 of the equalities below.

\Cref{iteratedwreath} shows that a generalized conjugacy class in $G \wr \Sigma_s \wr \Sigma_t$ has the form
\[
\bigoplus_i(H_i, \bigoplus_j (K_{i,j},[(\al_{H_i})_{K_{i,j}}])).
\]

Now we may use the definition of the pseudo-power operation to compute the effect of $\Po^{\phi}_{t}\Po^{\phi}_{s}$ and $\Po^{\phi}_{m}$ on a class function $f \in Cl_n(G,C_0)$. Let $f \in Cl_n(G,C_0)$, then
\begin{align*}
\Po^{\phi}_{t}(\Po^{\phi}_{s}(f))(\bigoplus_i(H_i, \bigoplus_j (K_{i,j},[(\al_{H_i})_{K_{i,j}}]))) &= \Prod{i}(\Po^{\phi}_{s}(f))^{\phi_{H_i}}(\bigoplus_j (K_{i,j},[(\al_{H_i})_{K_{i,j}}])) \\
&=\Prod{i}\phi_{H_i}^*\Po^{\phi}_{s}(f)((\bigoplus_j (K_{i,j},[(\al_{H_i})_{K_{i,j}}]))\psi_{H_i}^*) \\
&=\Prod{i}\phi_{H_i}^*\Po^{\phi}_{s}(f)(\bigoplus_j (\psi_{H_i}(K_{i,j}),[(\al_{H_i})_{K_{i,j}} \psi_{H_i}^*|_{\lat_{\psi_{H_i}(K_{i,j})}}])) \\
&=\Prod{i}\phi_{H_i}^*\Po^{\phi}_{s}(f)(\bigoplus_j (\phi_{H_i}(T_{i,j}),[(\al_{H_i})_{K_{i,j}} \psi_{H_i}^*|_{\lat_{\psi_{H_i}(K_{i,j})}}])) \\
&=\Prod{i}\Prod{j}\phi_{H_i}^*(f)^{\phi_{T_{i,j}/H_i}}([(\al_{H_i})_{K_{i,j}} \psi_{H_i}^*|_{\lat_{\psi_{H_i}(K_{i,j})}}]) \\
&=\Prod{i}\Prod{j}\phi_{H_i}^*\phi_{T_{i,j}/H_i}^*f([(\al_{H_i})_{K_{i,j}} \psi_{H_i}^*|_{\lat_{\psi_{H_i}(K_{i,j})}}\psi_{T_{i,j}/H_i}^*]) \\
&=\Prod{i}\Prod{j}\phi_{T_{i,j}}^*f([(\al_{H_i})_{K_{i,j}} \psi_{T_{i,j}}^*]) \\
&=\Po^{\phi}_{m}(f)(\bigoplus_{i,j}(T_{i,j},[(\al_{H_i})_{K_{i,j}}])). \\
%&= \Prod{i}\Prod{j}(f^{\phi_{K_j}})^{\phi_{H_i}}([(\al_{H_i})_{K_{i,j}}]). \\
\end{align*}
Going from the second to third line of the equalities we have used \cref{isoaction} applied to $\psi_{H_i}$. 

The reverse implication follows immediately. 
\end{proof}

\begin{example}
When $n=1$, there is an obvious power section: for $\Z/p^k \subset \Q_p/\Z_p$ we let $\phi_{\Z/p^k}$ be the multiplication by $p^k$ map on $\Q_p/\Z_p$.
\end{example}

The authors have been unable to find a general method for producing power sections for all $n$. We now turn to the case of $n=2$, which is important for applications to elliptic cohomology. We write elements of $\Isog(\QZ{2})$ as matrices with entries in $\Z_p$, but view these as acting on $\QZ{2}$.

\begin{lemma} \label{orderp} For each $H \in \Sub_p(\QZ{2})$, there exists a matrix $\phi_{H}$ such that 
\[
\phi_{H}^2 = [p] = \begin{bmatrix} p & 0 \\ 0 & p \end{bmatrix} \text{ and } \phi_{H}(\QZ{2}[p]) = H.
\]
\end{lemma}
\begin{proof}
Associate to the subgroup generated by $(\frac{1}{p},\frac{i}{p})$ the matrix
\[
\begin{bmatrix} -i & 1 \\ p-i^2 & i \end{bmatrix}
\]
and to the subgroup generated by $(0,\frac{1}{p})$ the matrix
\[
\begin{bmatrix} 0 & p \\ 1 & 0 \end{bmatrix}.
\]
\end{proof}

\begin{comment}
\begin{remark}
The conditions of the proposition were not pulled from thin air. They came out of trying to understand the behavior of the groups of order $p$ inside of $\QZ{2}[p]$.
\end{remark}
\end{comment}

Let us fix endomorphisms for each subgroup of order $p$ as in the above lemma and let $N$ be the submonoid of $\End(\Z_{p}^2)$ generated by the associated matrices $\phi_H$. Note that, if $H$ denotes a finite cyclic subgoup of $\QZ{2}$, then there exists a unique ordered tuple of matrices $(\phi_{K_1},\ldots,\phi_{K_i})$ associated to subgroups $K_j$ of order $p$ such that $H = \ker(\phi_{K_i} \cdots \phi_{K_1})$. The next lemma provides a generalization of this observation to arbitrary finite subgroups $H \subset \QZ{2}$, thereby producing a normal form for $\phi_H$. 

\begin{lemma}\label{lem:powersectionh2}
Suppose $H \subset \QZ{2}$ and let $k$ be the unique natural number with $\QZ{2}[p^k] \subseteq H$ and $\QZ{2}[p^{k+1}] \nsubseteq H$. Choose subgroups $K_{\alpha}$ of order $p$ such that 
\[
H = \ker(\phi_{K_i} \cdots \phi_{K_1}), \qquad \text{(i.e., $|H| = p^i$)}
\]
then the matrix $[p^k]$ divides $\phi_{K_i} \cdots \phi_{K_1}$ in $N$. Moreover, writing $\phi_{K_i} \cdots \phi_{K_1} = [p^k]\phi_{L_s} \cdots \phi_{L_1}$, the $\phi_{L_{\beta}}$ are uniquely determined by $H$. 
\end{lemma}
\begin{proof}
We prove this by induction on the order of $H$. We may assume that 
\[
\QZ{2}[p^k] \nsubseteq H_1 :=  \ker(\phi_{K_{i-1}} \cdots \phi_{K_1}). 
\]
If not, then we are done by induction on the order. 

Thus we are in the situation where $H \cong \Z/p^k \times \Z/p^j$ with $k \leq j$ and $H_1 \cong \Z/p^{k-1} \times \Z/p^j$. By induction, there exist subgroups $L_1,\ldots,L_s$ of order $p$ such that
\[
\phi_{K_{i-1}} \cdots \phi_{K_1} = [p^{k-1}]\phi_{L_s} \cdots \phi_{L_1}.
\]
By definition
\begin{align*}
\phi_{L_s}[p^{k-1}]\phi_{L_{s-1}}\cdots \phi_{L_1}(H) & = [p^{k-1}]\phi_{L_s} \cdots \phi_{L_{1}}(H) \\
& = \phi_{K_{i-1}} \cdots \phi_{K_1}(H) \\
& = K_i.
\end{align*}
Now since $H \cong \Z/p^k \times \Z/p^j$ where $k \leq j$ and $H_1 \cong \Z/p^{k-1} \times \Z/p^j$, we must have that $\phi_{L_s}^{-1}(K_i) = \QZ{2}[p]$. To see this, note that $[p^{k-1}]\phi_{L_{s-1}}\cdots \phi_{L_1}(H) \cong \Z/p^2$ would imply that $\QZ{2}[p^{k}] \nsubseteq H$, contradicting our assumption. It follows that
\[
K_i = \phi_{L_s}(\QZ{2}[p]) = L_s.
\] 
So $\phi_{K_i} = \phi_{L_s}$ and 
\begin{align*}
\phi_{K_i}\cdots \phi_{K_1} & = \phi_{K_i}[p^{k-1}]\phi_{L_s}\cdots\phi_{L_1} \\
& = [p^{k-1}]\phi_{L_s}^2\phi_{L_{s-1}}\cdots \phi_{L_1} \\
& =  [p^{k}]\phi_{L_{s-1}}\cdots \phi_{L_1}.
\end{align*}
This finishes the inductive construction. In order to see uniqueness, let $\phi_{K_i} \cdots \phi_{K_1}$ and $\phi_{K_{i}'} \cdots \phi_{K_{1}'}$ be two decompositions corresponding to two composition series for $H$ as above. Assume $\QZ{2}[p^k] \subseteq H$ and $\QZ{2}[p^{k+1}] \nsubseteq H$, then
\[
\phi_{K_i} \cdots \phi_{K_1} =  [p^k]\phi_{L_s} \cdots \phi_{L_1}
\]
and
\[
\phi_{K_{i}'} \cdots \phi_{K_{1}'} =  [p^k]\phi_{L_s'} \cdots \phi_{L_1'}. 
\]
We have that $[p^k]H \cong H/[p^k]H$ is cyclic and thus admits a unique decomposition. Therefore, $L_{\beta} = L_{\beta}'$ for $1\le \beta\le s$. 
\end{proof}

With this in hand, we can produce power sections for $n=2$.

\begin{prop} \label{combinatorics}
For each choice of endomorphisms for subgroups of order $p$ in $\QZ{2}$ satisfying the conclusion of \cref{orderp}, there is a power section $\phi \in \Gamma(\Sub(\qz), \Isog(\qz))$, i.e., a section $\phi$ such that for $K \subset H$
\[
\phi_H = \phi_{H/K}\phi_K, 
\]
where $H/K = \phi_K(H)$.
\end{prop}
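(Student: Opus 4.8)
The plan is to \emph{extend} the given endomorphisms, so far defined only on $\Sub_p(\QZ{2})$, to a section $\phi$ on all of $\Sub(\QZ{2})$, and to read off the power-section identity from a correspondence between products of the chosen order-$p$ endomorphisms and flags of finite subgroups of $\QZ{2}$. To a flag $\mathcal{F}\colon e = F_0 \subset F_1 \subset \cdots \subset F_i$ with each $F_s/F_{s-1}$ of order $p$, attach the sequence of order-$p$ subgroups $A_1 = F_1$ and $A_s = (\phi_{A_{s-1}}\cdots\phi_{A_1})(F_s)$ for $s \geq 2$, and set $\phi(\mathcal{F}) = \phi_{A_i}\cdots\phi_{A_1}$. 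Using that endoisogenies of $\QZ{2}$ are surjective, an easy induction gives $\ker\phi(\mathcal{F}) = F_i$; and reading off the partial kernels of an arbitrary product $\phi_{L_m}\cdots\phi_{L_1}$ of chosen order-$p$ endomorphisms produces a flag $\mathcal{F}$ with $L_s = A_s(\mathcal{F})$, so flags with top $H$ correspond bijectively to products of chosen order-$p$ endomorphisms with kernel $H$. One should also observe that the hypotheses of \myref{orderp} already force $\ker\phi_H = H$ when $|H| = p$, since $\phi_H(H) = \phi_H^2(\QZ{2}[p]) = [p]\QZ{2}[p] = 0$ while $\ker\phi_H$ has order $p$.

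The first and principal step is to show that $\phi(\mathcal{F})$ depends only on the top subgroup $H := F_i$, so that $\phi_H := \phi(\mathcal{F})$ is well defined. Write $H \cong \Z/p^a \times \Z/p^b$ with $a \geq b$; then $\QZ{2}[p^b] \subseteq H$ but $\QZ{2}[p^{b+1}] \nsubseteq H$, so the preceding proposition gives $\phi(\mathcal{F}) = [p^b]\Psi$ for some product $\Psi$ of chosen order-$p$ endomorphisms. Since $[p^b]$ is central and $\QZ{2}$ is $p$-divisible, $\ker\Psi = [p^b](H)$, which is the \emph{cyclic} subgroup isomorphic to $\Z/p^{a-b}$; and a product of chosen order-$p$ endomorphisms with cyclic kernel is unique, because a cyclic subgroup has a unique flag. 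Hence $\Psi$, and therefore $\phi(\mathcal{F}) = [p^b]\Psi$, is determined by $H$. This $\phi_H$ is an endoisogeny with $\ker\phi_H = H$, it agrees with the chosen map when $|H| = p$, and so $\phi \in \Gamma(\Sub(\QZ{2}), \Isog(\QZ{2}))$.

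For the power-section identity, fix $K \subseteq H$ and choose a flag $\mathcal{F}\colon e = F_0 \subset \cdots \subset F_i = H$ with $F_j = K$, obtained by extending a flag of $K$ to one of $H$. By well-definedness, $\phi_K = \phi_{A_j}\cdots\phi_{A_1}$ and $\phi_H = \phi_{A_i}\cdots\phi_{A_1}$. Applying $\phi_K$ to the chain $F_j \subset F_{j+1} \subset \cdots \subset F_i$ yields a flag $\mathcal{G}$ of $H/K = \phi_K(H)$ whose $r$-th term is $\phi_K(F_{j+r})$; since $A_{j+r} = (\phi_{A_{j+r-1}}\cdots\phi_{A_1})(F_{j+r})$ and $\phi_{A_j}\cdots\phi_{A_1} = \phi_K$, an induction identifies the order-$p$ subgroups attached to $\mathcal{G}$ with $A_{j+1}, \ldots, A_i$. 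Therefore $\phi_{H/K} = \phi(\mathcal{G}) = \phi_{A_i}\cdots\phi_{A_{j+1}}$, and composing with $\phi_K$ telescopes: $\phi_{H/K}\phi_K = \phi_{A_i}\cdots\phi_{A_{j+1}}\phi_{A_j}\cdots\phi_{A_1} = \phi_H$.

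The crux is the well-definedness in the second paragraph: it is precisely where the divisibility statement of the previous proposition and the uniqueness of factorizations over cyclic subgroups are combined, and where one must keep the $\Z_p$-matrix bookkeeping straight — in particular that $[p^b]$ is central, so that dividing it out of a product is unambiguous. Once $\phi_H$ is known to be independent of the flag, the power-section identity is a formal consequence of the construction, as the telescoping above shows; the remaining verifications (that $\phi_H$ is a genuine isogeny, that the flag correspondence is a bijection, and the cyclic uniqueness) are routine.
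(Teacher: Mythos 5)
Your proof is correct and follows essentially the same route as the paper's: define $\phi_H$ by composing the chosen order-$p$ endomorphisms along a composition series of $H$, use the preceding divisibility proposition to factor out $[p^b]$, and invoke the uniqueness of the composition series of the resulting cyclic kernel to get well-definedness. You are more explicit than the paper about the telescoping that yields $\phi_H=\phi_{H/K}\phi_K$ and about verifying $\ker\phi_H=H$ for $|H|=p$, but the key ideas coincide.
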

\begin{proof}
With notation as in the previous lemma, define $\phi_H$ to be the composite $\phi_{K_i} \cdots \phi_{K_1}$. The power section condition for $\phi$ follows from the uniqueness of its normal form, given by \Cref{lem:powersectionh2}.
%
%Indeed, let $H \subset \QZ{2}$ and $\phi_{K_i} \cdots \phi_{K_1}$ and $\phi_{K_{i}'} \cdots \phi_{K_{1}'}$ be two decompositions corresponding to two composition series for $H$. Assume $\QZ{2}[p^k] \subseteq H$ and $\QZ{2}[p^{k+1}] \nsubseteq H$, then
%\[
%\phi_{K_i} \cdots \phi_{K_1} =  [p^k]\phi_{K_{l_s}} \cdots \phi_{K_{l_1}}
%\]
%and
%\[
%\phi_{K_{i}'} \cdots \phi_{K_{1}'} =  [p^k]\phi_{K_{l_s}'} \cdots \phi_{K_{l_1}'},
%\]
%by \Cref{lem:powersectionh2}. We have that $[p^k]H \cong H/[p^k]H$ is cyclic and thus admits a unique decomposition. So we must have the equality
%\[
%\phi_{K_{l_s}} \cdots \phi_{K_{l_1}} = \phi_{K_{l_s}'} \cdots \phi_{K_{l_1}'}.
%\]
%In fact, we have $\phi_{K_{l_j}} = \phi_{K_{l_j}'}$.
\end{proof}

\section{Recollections on Morava $E$-theory}\label{sec:etheoryrecolletions}
In this section we recall the facts that we need about Morava $E$-theory before we can proceed. For any space $X$ we set
\[
\E(X) = \E^0(X).
\]
We will also set $\E = \E^0$. By Goerss, Hopkins, Miller~\cite{structuredmoravae} there is a unique $E_{\infty}$-ring structure on the spectrum $\E$. In the homotopy category, this gives rise to total power operations \cite[Section VIII]{bmms}. The total power operations are natural multiplicative non-additive maps
\[
\Po_m \colon \E(X) \lra{} \E(E\Sigma_{m} \times_{\Sigma_{m}} X^{m}),
\]
for all $m > 0$, that satisfy the relations of \cref{porelations}. 

A common simplification of $\Po_m$ is given by restriction along the diagonal $X \lra{\Delta} X^{m}$. This gives a map
\[
P_m\colon\E(X) \lra{} \E(B\Sigma_{m} \times X) \cong \E(B\Sigma_{m}) \otimes_{\E} \E(X),
\]
the isomorphism being a consequence of the freeness of $\E(B\Sigma_{m})$ over $\E$ \cite[Proposition 3.6]{etheorysym}. Let $J \subset \E(E\Sigma_{p^k} \times_{\Sigma_{p^k}} X^{p^k})$ be the ideal generated by the image of the transfers along the maps
\[
E(\Sigma_i \times \Sigma_j) \times_{(\Sigma_i \times \Sigma_j)} X^{p^k} \lra{} E\Sigma_{p^k} \times_{\Sigma_{p^k}} X^{p^k}
\]
for $i,j > 0$ and $i+j = p^k$ and let $I \subset \E(B\Sigma_{p^k})$ be the ideal generated by the image of the same transfer for $X = \ast$. Neither $\Po_m$ nor $P_m$ are additive, but they can both be made so by taking the quotient by $J$ and $I$, respectively. Thus we have ring maps
\[
\Po_{p^k}/J\colon\E(X) \lra{} \E(E\Sigma_{p^k} \times_{\Sigma_{p^k}} X^{p^k})/J
\] 
and
\[
P_{p^k}/I\colon\E(X) \lra{} \E(B\Sigma_{p^k} \times X) \cong \E(B\Sigma_{p^k})/I \otimes_{\E} \E(X).
\]
Let us call $\Po_{p^k}/J$ the additive total power operation. For $m \neq p^k$ it is still possible to define $\Po_m/J$, but these ring maps are uninteresting. 

\begin{remark}
We have abused notation by calling these ideals $I$ and $J$ in conflict with the $I$ and $J$ defined prior to \cref{transferideal}. 
\end{remark}

We will apply these power operations to $X = BG$; in this case 
\[
E\Sigma_m \times_{\Sigma_m} X^m \simeq BG \wr \Sigma_m.
\]
Recall the maps of Equation \eqref{maps} from \cref{sec:rationalpo}. The relations that we will need are the following:

\begin{prop} (\cite{bmms}) \label{porelations}
For any $x \in \E(X)$ and $i,j,m\ge 0$, we have:
\begin{enumerate}
\item $\nabla^* \Po_{ij}(x) = \Po_i \Po_j(x)$,
\item $\Delta^* \Po_m(x) = x^m$,
\item $\Delta_{i,j}^* \Po_{i+j}(x) = \delta^*(\Po_i(x)\Po_j(x))$.
\end{enumerate}
\end{prop}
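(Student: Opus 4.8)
The plan is to derive all three identities from the $H_\infty$-ring structure underlying $\E$. By Goerss--Hopkins--Miller \cite{structuredmoravae}, $\E$ is $E_\infty$, hence in particular an $H_\infty$-ring spectrum, and relations $(1)$--$(3)$ are exactly instances of the structural identities collected in Proposition~\upperRomannumeral{8}.1.1 of \cite{bmms}. Concretely, I would first recall that the $H_\infty$-structure supplies, for each $m$, a structure map $\xi_m\colon D_m\E \lra{} \E$ out of the $m$-th extended power, and that for $x\in\E^0(X)=[\Sigma^{\infty}_{+}X,\E]$ the total power operation is obtained as
\[
\Sigma^{\infty}_{+}(E\Sigma_m\times_{\Sigma_m}X^m)\;\simeq\; D_m(\Sigma^{\infty}_{+}X)\xrightarrow{\ D_m(x)\ } D_m\E \xrightarrow{\ \xi_m\ } \E,
\]
using the natural equivalence $\Sigma^{\infty}_{+}(E\Sigma_m\times_{\Sigma_m}X^m)\simeq D_m(\Sigma^{\infty}_{+}X)$.

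Given this, each relation becomes a single $H_\infty$-axiom together with naturality of $D_m$ in the variable $x$. For $(1)$ I would use the coherence equivalence $\nu$ identifying the iterated extended power $D_iD_j$ with $D_{ij}$ together with the axiom $\xi_{ij}\circ\nu\simeq\xi_i\circ D_i(\xi_j)$; feeding $x$ through $D_iD_j$ and using naturality of $\nu$ then gives $\Po_i\Po_j(x)=\xi_{ij}\circ D_{ij}(x)\circ\nu=\nabla^*\Po_{ij}(x)$, once one checks that on Borel constructions $\nu$ is (a model for) the map induced by the relevant inclusion of wreath products into $\Sigma_{ij}$, i.e.\ $\nabla$. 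For $(2)$ I would use the unit axiom: the composite of $\xi_m$ with the ``fibre over a point'' inclusion $\E^{\wedge m}\hookrightarrow D_m\E$ is the $m$-fold multiplication of $\E$; since the total diagonal $\Sigma^{\infty}_{+}X\to(\Sigma^{\infty}_{+}X)^{\wedge m}$ factors $D_m(x)$ through this inclusion, $\Delta^*\Po_m(x)$ is the $m$-fold product of $x$ with itself, namely $x^m$. For $(3)$ I would use the restriction axiom along $\Sigma_i\times\Sigma_j\hookrightarrow\Sigma_{i+j}$: the composite $D_i\E\wedge D_j\E\to D_{i+j}\E\xrightarrow{\xi_{i+j}}\E$ is $\mu\circ(\xi_i\wedge\xi_j)$, and under the identification $D_{i+j}(\Sigma^{\infty}_{+}X)|_{\Sigma_i\times\Sigma_j}\simeq D_i(\Sigma^{\infty}_{+}X)\wedge D_j(\Sigma^{\infty}_{+}X)$ the map $\delta$ realises the external product $\Po_i(x)\Po_j(x)$, which yields the identity.

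The step I expect to be fiddliest is purely bookkeeping: making the abuse of notation in Equation~\ref{maps} precise. There $\nabla$, $\Delta$, $\Delta_{i,j}$, $\delta$ are written as maps of wreath products, and for general $X$ one must pass to classifying spaces and Borel constructions and match them with the standard models $E(\Sigma_i\wr\Sigma_j)\simeq E\Sigma_i\times(E\Sigma_j)^{i}$, $B(\Sigma_i\times\Sigma_j)\simeq B\Sigma_i\times B\Sigma_j$, and so on, so that the corresponding coherence axiom of \cite[Ch.~\upperRomannumeral{8}]{bmms} applies on the nose. None of this is conceptually hard; since these identities are precisely the $E$-theoretic avatar of \cite[Proposition~\upperRomannumeral{8}.1.1]{bmms}, in practice I would set up this dictionary and then cite BMMS rather than reprove the extended-power coherences.
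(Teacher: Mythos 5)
Your proposal is correct and matches the paper's approach: the paper simply cites Chapter~\upperRomannumeral{8} of \cite{bmms}, and your argument is just an expanded account of why Proposition~\upperRomannumeral{8}.1.1 there applies to the $H_\infty$-structure on $\E$. The extra detail on extended powers and the bookkeeping for $\nabla$, $\Delta$, $\Delta_{i,j}$, $\delta$ is accurate but not a different route.
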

\begin{proof}
The proof can be found in Chapter~\upperRomannumeral{8} from \cite{bmms}.
\end{proof}

\begin{cor}[Proposition~\upperRomannumeral{8}.1.1, \cite{bmms}.] \label{Grewrite}
There is a commutative diagram
\[
\xymatrix{\E(BG) \ar[r]^-{\Po_{p^k}} \ar[d]^-{\Po_{p^{k-1}}} & \E(BG \wr \Sigma_{p^k}) \ar[d]^{\nabla^*} & \\ \E(BG \wr \Sigma_{p^{k-1}}) \ar[r]^-{\Po_{p}} & \E(BG \wr \Sigma_{p^{k-1}} \wr \Sigma_p) \ar[r]& \E(BG \wr \Sigma_{p^{k-1}}^p)}
\]
and the formula for the whole composite is just $x \mapsto \Po_{p^{k-1}}(x) \otimes \cdots \otimes \Po_{p^{k-1}}(x)$.
\end{cor}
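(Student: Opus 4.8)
The plan is to obtain the statement as a formal consequence of the relations recorded in \myref{porelations}, the only real work being to identify correctly the maps of spaces appearing in the diagram. Write $H = G \wr \Sigma_{p^{k-1}}$, so that the bottom row of the diagram reads
\[
\E(BG) \lra{\Po_{p^{k-1}}} \E(BH) \lra{\Po_p} \E(BH \wr \Sigma_p) \longrightarrow \E(BH^p),
\]
where $BH \wr \Sigma_p \simeq E\Sigma_p \times_{\Sigma_p} (BH)^p$, where $BH^p = (BG \wr \Sigma_{p^{k-1}})^p$ is the space written $BG \wr \Sigma_{p^{k-1}}^p$ in the statement, and where the last arrow is restriction along the base inclusion $H^p \hookrightarrow H \wr \Sigma_p$ --- equivalently, along the inclusion of the fibre $BH^p \hookrightarrow E\Sigma_p \times_{\Sigma_p} (BH)^p$. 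The map $\nabla \colon H \wr \Sigma_p = (G \wr \Sigma_{p^{k-1}}) \wr \Sigma_p \to G \wr \Sigma_{p^k}$ is the standard inclusion.

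First I would check that the left-hand square commutes, which is precisely relation (1) of \myref{porelations}: taking $i = p^{k-1}$ and $j = p$, so that $ij = p^k$ and $\nabla$ is as above, it gives $\nabla^{*}\Po_{p^k}(x) = \Po_p\bigl(\Po_{p^{k-1}}(x)\bigr)$ for all $x \in \E(BG)$. This is exactly the assertion that the two composites $\E(BG) \to \E(BH \wr \Sigma_p)$ obtained by traversing the square agree, with common value $\Po_p \circ \Po_{p^{k-1}}$.

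It remains to compute the further restriction to $\E(BH^p)$ and to identify the total composite. Set $y = \Po_{p^{k-1}}(x) \in \E(BH)$; I need the restriction of $\Po_p(y) \in \E(E\Sigma_p \times_{\Sigma_p} (BH)^p)$ along the fibre inclusion. By the basic structure of the power operation this restriction is the external $p$-th power $y^{\times p} = y \times \cdots \times y \in \E(BH^p)$ (see \cite{bmms}, Chapter~\upperRomannumeral{8}); alternatively one extracts it from \myref{porelations} by iterating relation (3) with all parts equal to $1$ and using $\Po_1 = \id$, which shows that $\Po_p(y)$ restricted along the base inclusion $H^p \hookrightarrow H \wr \Sigma_p$ equals $y^{\times p}$. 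Consequently the bottom composite sends $x \mapsto \Po_{p^{k-1}}(x)^{\times p}$, and combining with the previous paragraph this is also the value of the top composite $x \mapsto \nabla^{*}\Po_{p^k}(x)$ postcomposed with the last arrow. This proves both the commutativity and the displayed formula.

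There is no substantial homotopy-theoretic obstacle once \myref{porelations} is in hand; the one point demanding care is the bookkeeping with the wreath-product inclusions --- matching the inner and outer symmetric groups so that relation (1) is applied with the correct indices, and recognising the unlabelled arrow in the diagram as restriction to the fibre $BH^p$ --- so I expect the identification of the relevant maps of spaces, rather than any new input, to be the main thing to get right.
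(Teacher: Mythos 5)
Your proposal is correct and follows the paper's own (very terse) proof: the paper likewise deduces the corollary "immediately from \myref{porelations}(1) and (3)," i.e.\ relation (1) for the left square and the iterated form of relation (3) (with all parts equal to $1$ and $\Po_1 = \id$) to identify the restriction to the fibre $BH^p$ as the external $p$-fold product. Your identification of the unlabelled arrow as restriction along $H^p \hookrightarrow H \wr \Sigma_p$ and the resulting formula $x \mapsto \Po_{p^{k-1}}(x)^{\times p}$ match the paper exactly.
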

\begin{proof}
The result follows immediately from \Cref{porelations} parts (1) and (3). 
\end{proof}

%Perhaps the most important relation is expressed in the following commutative diagram:
%\[
%\xymatrix{\E(X) \ar[r]^-{\Po} \ar[d]^-{\Po} & \E(E\Sigma_{p^k} \times_{\Sigma_{p^k}} X^{p^k}) \ar[d]^{\nabla} & \\ \E(E\Sigma_{p^k} \times_{\Sigma_{p^k}} X^{p^k}) \ar[r]^-{\Po} & \E(B\Sigma_{p^{k-1}} \wr \Sigma_p) \ar[r]& \E(B\Sigma_{p^{k-1}}^p).}
%\]

Character theory for Morava $E$-theory is constructed in \cite{hkr}. Hopkins, Kuhn, and Ravenel construct the ring $C_0$, which is introduced in \Cref{sec:isogenies}, and produce a natural character map
\[
\chi\colon\E(BG) \lra{} Cl_n(G,C_0)
\]
with the property that the induced map
\[
C_0 \otimes_{\E} \E(BG) \lra{\cong} Cl_n(G,C_0)
\]
is an isomorphism. They produce the action of $\Aut(\qz)$ on $Cl_n(G,C_0)$, described in \Cref{action}, and prove further that the above isomorphism induces an isomorphism
\[
p^{-1}\E(BG) \lra{\cong} Cl_n(G,C_0)^{\Aut(\qz)}.
\]
Note that $\E(BG)$ is a $\Z_p$-algebra, so $p^{-1}\E(BG) = \Q \otimes \E(BG)$ is the rationalization of the ring.

Theorem D in \cite{hkr} discusses the relationship between the character map and transfer maps for $\E$ and class functions. For $H \subset G$ there is a commutative diagram
\[
\xymatrix{\E(BH) \ar[r]^{\Tr} \ar[d]_{\chi} & \E(BG) \ar[d]^-{\chi} \\ Cl_n(H,C_0) \ar[r]^-{\Tr} & Cl_n(G,C_0),}
\] 
where 
\[
\Tr \colon Cl_n(H,C_0) \rightarrow Cl_n(G,C_0)
\]
is the transfer introduced in \cref{sec:rationalpo}.

There is a close relationship between the $E$-cohomology of finite groups and algebraic geometry related to $\G$. We will recall some of these results, and refer the interested reader to \cite{hkr, etheorysym,genstrickland} for the details. 

Let $\Hom(A^*, \G)$ be the scheme of maps from $A^*$ to $\G$ and define $\Sub_{p^k}(\G)$ to be the scheme of subgroups of order $p^k$ of $\G$, sending an $\E$-algebra $R$ to the collection of subgroups $H \subseteq R \otimes \G$ of order $p^k$. Moreover, for $A^* \subseteq \qz'$ a finite abelian group, $\Sub_{p^k|A|}^{A^*}(\G \oplus \qz')$ denotes the scheme with underlying functor
\[R \mapsto \{H \subseteq R\otimes \G \oplus \qz'\mid |H| = p^k|A|,\, \mathrm{pr}(H)=A^* \}\]
for an $\E$-algebra $R$, where $\mathrm{pr}$ is induced by the natural projection $\G \oplus \qz' \lra{} \qz'$. We often write $p^{k'} = p^k|A|$. The following table provides the dictionary we need. 

\begin{center}
\begin{tabular}{|l|l|}
\hline
Topology & Algebraic geometry \\
\hline
$\E(BA)$ & $\Hom(A^*, \G)$ \\
$\E(B\Sigma_{p^k})/I$ & $\Sub_{p^k}(\G)$ \\
$\E(BA \wr \Sigma_{p^k})/J$ & $\Sub_{p^{k'}}^{A^*}(\G \oplus \qz')$ \\
\hline
\end{tabular}
\end{center}

These results can be found in \cite[Proposition 5.12]{hkr}, \cite[Theorem 9.2]{etheorysym}, and \cite[Theorem 7.11]{genstrickland}.

\section{The additive total power operation applied to abelian groups}\label{sec:proofabelian}
Work of Ando~\cite{isogenies} as well as Ando, Hopkins, and Strickland~\cite{ahs} gives an algebro-geometric description of the additive power operation $P_{p^k}/I$ applied to finite abelian groups. In this section we recall their result and prove an extension, which is a key step in the proof of the main theorem.

\begin{comment}
Fix a map $\Z_p^t=\lat' \to A$ with $A$ a finite abelian, and let $\qz' = (\lat')^*$. We define two ideals. Let $I_{} \subset \E(B\Sigma_{p^k})$ be the ideal generated by the image of the transfer maps along the subgroups $\Sigma_i \times \Sigma_j \subset \Sigma_{p^k}$ and let $J_{} \subset \E(BA \wr \Sigma_{p^k})$ be the ideal generated by the transfer along $BA \wr (\Sigma_{i} \times \Sigma_{j}) \subset  BA \wr \Sigma_{p^k}$ where $i,j>1$ and $i+j = p^k$.
\end{comment}

\begin{lemma}\label{compatibletransferideal}
For any finite group $G$, there is an induced map
\[
\E(BG \wr \Sigma_{p^k})/J_{} \lra{} \E(BG)\otimes_{\E} \E(B\Sigma_{p^k})/I_{}.
\]
\end{lemma}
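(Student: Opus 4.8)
The plan is to exhibit the desired map as a quotient of the restriction-along-the-diagonal map $P\colon \E(BG\wr\Sigma_{p^k}) \lra{} \E(B\Sigma_{p^k}\times BG)$ by chasing where the transfer ideal $J$ goes. Recall that $P$ comes from pulling back along the diagonal $BG \lra{\Delta} BG^{p^k}$, inducing
\[
B\Sigma_{p^k}\times BG \simeq E\Sigma_{p^k}\times_{\Sigma_{p^k}} BG \lra{} E\Sigma_{p^k}\times_{\Sigma_{p^k}}(BG)^{p^k} \simeq BG\wr\Sigma_{p^k},
\]
so that $P$ is a genuine ring map (the non-additivity is already gone after restricting to the diagonal), and under the freeness isomorphism $\E(B\Sigma_{p^k}\times BG)\cong \E(B\Sigma_{p^k})\otimes_{\E}\E(BG)\cong \E(BG)\otimes_{\E}\E(B\Sigma_{p^k})$. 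So first I would record that there is already a ring map $P\colon \E(BG\wr\Sigma_{p^k}) \lra{} \E(BG)\otimes_{\E}\E(B\Sigma_{p^k})$.

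Next, the key step: show that $P$ carries the ideal $J$ into the ideal generated by the image of $I$, i.e. into $\E(BG)\otimes_\E I$. Both ideals are generated by transfers along the inclusions of $\Sigma_i\times\Sigma_j$ (with $i+j=p^k$, $i,j>0$), so this is a compatibility of restriction with transfer. Concretely, for such $i,j$ there is a commutative square of spaces
\[
\xymatrix{
B(\Sigma_i\times\Sigma_j)\times BG \ar[r]\ar[d] & BG\wr(\Sigma_i\times\Sigma_j) \ar[d] \\
B\Sigma_{p^k}\times BG \ar[r] & BG\wr\Sigma_{p^k},
}
\]
where the horizontal maps are the diagonal-restriction maps and the vertical maps are the evident inclusions; this square is (homotopy) Cartesian in the relevant sense, or at least satisfies the double-coset/Mackey compatibility needed to conclude $P\circ\Tr^{BG\wr(\Sigma_i\times\Sigma_j)} = (\Tr^{B(\Sigma_i\times\Sigma_j)}\otimes \id)\circ (\text{restriction})$ on $\E$-cohomology. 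Hence $P(J)\subseteq \E(BG)\otimes_\E I$, and since $\E(B\Sigma_{p^k})$ is free over $\E$ we have $\E(BG)\otimes_\E I = \ker\big(\E(BG)\otimes_\E\E(B\Sigma_{p^k})\twoheadrightarrow \E(BG)\otimes_\E\E(B\Sigma_{p^k})/I\big)$. Therefore $P$ descends to the desired ring map on quotients.

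I expect the main obstacle to be the transfer-restriction compatibility in the square above: one must verify that restricting the transfer ideal $J$ along the diagonal lands inside (the extension of) $I$, which requires knowing that the diagonal restriction of a transfer from $BG\wr(\Sigma_i\times\Sigma_j)$ is, up to the freeness identification, induced from the transfer along $B(\Sigma_i\times\Sigma_j)\subset B\Sigma_{p^k}$ after base change by $BG$. This is a standard double-coset computation for the wreath-product inclusions (the diagonal $G\hookrightarrow G^{p^k}$ is $\Sigma_{p^k}$-equivariant and the double cosets $\Sigma_i\times\Sigma_j \backslash \Sigma_{p^k}/\Sigma_i\times\Sigma_j$ interact trivially with the $G$-factor), but writing it out carefully is the only non-formal part; everything else follows from $\E(B\Sigma_{p^k})$ being finitely generated and free over $\E$. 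Once this is in place the conclusion is immediate.
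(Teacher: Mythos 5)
Your approach is essentially the paper's: both reduce the lemma to the claim that the square comparing $B(\Sigma_i\times\Sigma_j)\times BG \to BG\wr(\Sigma_i\times\Sigma_j)$ with $B\Sigma_{p^k}\times BG \to BG\wr\Sigma_{p^k}$ is a homotopy pullback, which yields the transfer--restriction compatibility and hence $P(J)\subseteq \E(BG)\otimes_{\E} I$, so the map descends. The one imprecision is that the relevant double cosets are $(G\times\Sigma_{p^k})\backslash (G\wr\Sigma_{p^k})/(G\wr(\Sigma_i\times\Sigma_j))$ rather than $\Sigma_i\times\Sigma_j\backslash\Sigma_{p^k}/\Sigma_i\times\Sigma_j$; the paper carries out exactly the computation you deferred, showing there is a single such double coset (every $(g_1,\ldots,g_{p^k};\sigma)$ factors as $(1,\ldots,1;\sigma)(g_1,\ldots,g_{p^k};e)$) with intersection $G\times\Sigma_i\times\Sigma_j$.
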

\begin{proof}
The claim reduces to checking that the following commutative diagram is a homotopy pullback for any $i,j$ with $i+j=p^k$:
\[\xymatrix{B(G \times \Sigma_i \times \Sigma_j) \ar[r] \ar[d] & BG\wr (\Sigma_i \times \Sigma_j) \ar[d]\\
B(G \times \Sigma_{p^k}) \ar[r] & BG \wr \Sigma_{p^k}.}\]
Since the square is commutative, it suffices to show that $BG \times \Sigma_i \times \Sigma_j$ has the correct homotopy type. This is obvious for $\pi_k$, $k>1$. To see that the pullback is connected, consider the double coset formula. Every element $g = (g_1,\ldots,g_{p^k};\sigma) \in G \wr \Sigma_{p^k}$ can be factored as $(g_1,\ldots,g_{p^k};e) \circ (1,\ldots,1;\sigma)$, where $(1,\ldots,1;\sigma) \in G\times \Sigma_{p^k}$ and $(g_1,\ldots,g_{p^k};e) \in G \wr (\Sigma_i \times \Sigma_j)$. Finally, $\pi_1$ of the pullback is given by the intersection of $G\times \Sigma_{p^k}$ with $G \wr (\Sigma_i \times \Sigma_j)$ inside $G \wr \Sigma_{p^k}$, which clearly is $G \times \Sigma_i \times \Sigma_j$.
\end{proof}

There are two $\E$-algebra structures on $\E(B\Sigma_{p^k})/I_{}$ of interest to us, given by the following two maps: the standard inclusion $i$ induced by $B\Sigma_{p^k} \lra{} \ast$ and the power operation $P_{p^k}/I\colon \E \to \E(B\Sigma_{p^k})/I$. Using \Cref{compatibletransferideal} and the commutative diagram
\[
\xymatrix{\E \ar[r]^-{P_{p^k}/I} \ar[d] & \E(B\Sigma_{p^k})/I \ar[d] \\ \E(BA) \ar[r]^-{\Po_{p^k}/J} & \E(BA\wr \Sigma_{p^k})/J,}
\]
we thus obtain a commutative diagram of rings:
\begin{equation}\label{relativeisogeny}
\xymatrix{\E(BA) \ar[r]_-{\Po_{p^k}/J} \ar[rd] \ar@/^1pc/[rr]^-{P_{p^k}/I} & \E(BA \wr \Sigma_{p^k})/J_{} \ar[r] & \E(BA) \otimes_{\E}^i \E(B\Sigma_{p^k})/I_{}\\
& \E(BA) \otimes_{\E}^{P_{p^k}/I} \E(B\Sigma_{p^k})/I_{}, \ar[u] \ar[ru]}
\end{equation}
where the superscripts on the tensor product indicate the relevant $\E$-algebra structure on $\E(B\Sigma_{p^k})/I$.

\begin{remark}
In our notation, $A$ plays a very different role than in the setting of \cite{ahs}. Indeed, they consider a level structure in place of our $\Sigma_{p^k}$ and $S^1$ instead of our $A$; the translation is readily made by passing to torsion subgroups of $S^1$ and using arbitrary subgroups, see \cite[Remark 3.12]{ahs}.
\end{remark}

The following result is proven in \cite[4.2.5]{isogenies} and \cite[3.21]{ahs}.

\begin{prop}[Ando--Hopkins--Strickland]\label{ahs}
The additive power operation
\[\E(BA) \lra{P_{p^k}/I} \E(BA) \otimes^{i}_{\E} \E(B\Sigma_{p^k})/I_{}\]
is the ring of functions on the map
\[\hom(A^*,\G) \times \Sub_{p^k}(\G) \lra{} \hom(A^*, \G)\]
induced by
\[(f\colon A^* \rightarrow R \otimes \G, H \subset R\otimes \G) \mapsto (A^* \lra{f} R\otimes \G \lra{} (R \otimes \G)/H \cong R \tensor[^{P_{p^k}/I}]{\otimes}{_\E} \G).\]
\end{prop}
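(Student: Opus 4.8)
The plan is to reduce, using naturality and multiplicativity of the power operation, to the case of the universal Euler class, where the assertion becomes Ando's theorem identifying the $H_\infty$-structure of $\E$ with an isogeny. By the dictionary in Section~\ref{sec:etheoryrecolletions} together with Strickland's computation of the $E$-theory of symmetric groups \cite{etheorysym,subgroups}, the source of $P/I$ is the ring of functions on $\hom(A^*,\G)$ and its target $\E(BA)\otimes^i_\E\E(B\Sigma_{p^k})/I$ is the ring of functions on $\hom(A^*,\G)\times_{\Spf(\E)}\Sub_{p^k}(\G)$; since $P/I$ is a ring map, to identify the corresponding map of schemes it suffices to compute $P/I$ on a set of $\E$-algebra generators of $\E(BA)$ and compare it with pullback along the map in the statement. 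For generators I would take the Euler classes $x_\chi\in\E(BA)$ of the line bundles $L_\chi$ attached to the characters $\chi\in A^*$; a finite set of these generates $\E(BA)$ by the standard computation of the $E$-theory of a finite abelian group, and under $\E(BA)\cong\Sect\hom(A^*,\G)$ the function $x_\chi$ sends a homomorphism $f\colon A^*\to R\otimes\G$ to the coordinate of $f(\chi)\in R\otimes\G$. Each $x_\chi$ is the pullback of the Euler class $u\in\E(BS^1)$ of the universal line bundle along the classifying map $\ell_\chi\colon BA\to BS^1$ of $L_\chi$.

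By naturality of the power operation in the space, $P/I(x_\chi)$ is pulled back from $P/I(u)\in(\E(B\Sigma_{p^k})/I)\powser{u}$ along $\mathrm{id}\times\ell_\chi$, so everything is governed by this single power series. The crucial input is that $P/I(u)$ is the coordinate of the universal isogeny: over $\Sub_{p^k}(\G)$ there is the universal subgroup $H^{\mathrm{univ}}$ of order $p^k$, the quotient $\G/H^{\mathrm{univ}}$ is again a deformation of $\Gamma$ and hence classified by a map $\E\to\E(B\Sigma_{p^k})/I$ which is precisely the $\E$-algebra structure $P/I\colon\E\to\E(B\Sigma_{p^k})/I$, and under the resulting $\star$-isomorphism $(R\otimes\G)/H\cong R\tensor[^{P/I}]{\otimes}{_{\E}}\G$ the series $P/I(u)$ is the coordinate of the quotient isogeny $\G\to\G/H^{\mathrm{univ}}$. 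This is the combination of Ando's theorem \cite{isogenies}, that the $H_\infty$-structure equips $\G$ over $\E(B\Sigma_{p^k})/I$ with a degree-$p^k$ isogeny whose kernel is the universal subgroup, with the uniqueness of an isogeny with prescribed kernel and Strickland's description of $\E(B\Sigma_{p^k})/I$ \cite{etheorysym}.

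Granting this, I would assemble the two computations. Fix an $\E$-algebra $R$ and an $R$-point $(f,H)$ of $\hom(A^*,\G)\times_{\Spf(\E)}\Sub_{p^k}(\G)$, i.e.\ a homomorphism $f\colon A^*\to R\otimes\G$ together with a subgroup $H\subset R\otimes\G$ of order $p^k$. For each $\chi\in A^*$, the value $\big(P/I(x_\chi)\big)(f,H)$ equals $\big(P/I(u)\big)(H,f(\chi))$, which by the previous paragraph is the coordinate of the image of $f(\chi)$ under the quotient $R\otimes\G\twoheadrightarrow(R\otimes\G)/H$, read off through the isomorphism $(R\otimes\G)/H\cong R\tensor[^{P/I}]{\otimes}{_{\E}}\G$. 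As $\chi$ varies over $A^*$ this says precisely that $(f,H)$ is carried to the homomorphism
\[
A^*\lra{f}R\otimes\G\twoheadrightarrow(R\otimes\G)/H\cong R\tensor[^{P/I}]{\otimes}{_{\E}}\G,
\]
which is the point of $\hom(A^*,\G)$ described in the statement; it is automatically a homomorphism, being the composite of $f$ with an isogeny, and naturality in $R$ is clear, so this is the map of schemes underlying $P/I$. Finally, the passage from $A$ cyclic to an arbitrary finite abelian group is done one cyclic factor at a time, using $\E(B(A\times A'))\cong\E(BA)\otimes_\E\E(BA')$ and the compatibility of the power operation with external products (compare \cite[Remark~3.12]{ahs}).

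The main obstacle is the circle case: the identification of $P/I(u)$ with the coordinate of the universal isogeny. This is the only place where the $E_\infty$-structure genuinely enters and cannot be obtained by formal nonsense, and it is also where one must keep careful track of the two $\E$-algebra structures on $\E(B\Sigma_{p^k})/I$, the standard one $i$ and the twisted one through $P/I$, that underlie the $\star$-isomorphism $(R\otimes\G)/H\cong R\tensor[^{P/I}]{\otimes}{_{\E}}\G$. Once that is in hand the remainder is bookkeeping, and indeed the proposition is a repackaging of \cite{isogenies} and \cite{ahs} via the dictionary of Section~\ref{sec:etheoryrecolletions}.
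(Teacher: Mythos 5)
Your argument is correct, but it follows a genuinely different route from the one the paper presents: you reconstruct the original topological proof of Ando and Ando--Hopkins--Strickland, which the paper cites in its first sentence and then deliberately replaces by an alternative algebro-geometric argument. You reduce to the universal Euler class $u$ over $BS^1$ via naturality and multiplicativity, and then invoke the identification of $P/I(u)$ with the pullback of the coordinate along the quotient isogeny $\G \to \G/H^{\mathrm{univ}}$ --- but that identification is precisely the content of \cite[4.2.5]{isogenies} and \cite[3.21]{ahs}, so the crux of your proof is outsourced to those references (legitimately, since the proposition is attributed to them, but it means your argument does not reprove the key input). The paper instead observes that, for $A$ cyclic, the base-changed power operation $\E(BA)\otimes_{\E}^{P/I}\E(B\Sigma_{p^k})/I \lra{} \E(BA)\otimes_{\E}^{i}\E(B\Sigma_{p^k})/I$ is a homomorphism of $p$-divisible groups over the complete local noetherian ring $\E(B\Sigma_{p^k})/I$; since passage to the special fiber is faithful on $p$-divisible groups over such a ring \cite[1.4.2.3]{cmlifting}, it suffices to compare this homomorphism with the quotient isogeny modulo the maximal ideal, where both are visibly $x \mapsto x^{p^k}$. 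Your route makes the role of the $E_\infty$-structure and of coordinates explicit; the paper's route is coordinate-free, needs from homotopy theory only that the power operation modulo transfers and modulo $\m$ is the $p^k$-th power map, and pays for this with the faithfulness theorem for $p$-divisible groups. One caution about your phrasing: the assertion that $P/I(u)$ ``is the coordinate of the universal isogeny'' should be read as $P/I(u) = g^*(u\otimes 1)$, where $g$ is the quotient isogeny composed with the $\star$-isomorphism $(R\otimes\G)/H \cong R\tensor[^{P/I}]{\otimes}{_{\E}}\G$ (which holds for any coordinate and is equivalent to the coordinate-free statement of the proposition); it should not be conflated with the norm-coherence statement of Ando's main theorem, which singles out a preferred coordinate and is a strictly stronger claim.
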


\begin{comment}
%This proof contains a gap: We need a map between the domains of the two maps in question. 
\begin{proof}
This is proven topologically in \cite[4.2.5]{isogenies} and \cite[3.21]{ahs}. Here, we give an alternative algebro-geometric argument, using a useful result from the theory of $p$-divisible groups. 

First, observe that it is enough to prove the claim for $A$ a finite cyclic group. The additive total power operation extends to a map of $(\E(B\Sigma_{p^k})/I_{})$-algebras 
\[
\E(BA) \otimes_{\E}^{P_{p^k}/I} \E(B\Sigma_{p^k})/I_{} \lra{} \E(BA) \otimes_{\E}^i \E(B\Sigma_{p^k})/I_{}.
\]
Geometrically, this map corresponds to a map of $p$-divisible groups over $\Spf(\E(B\Sigma_{p^k})/I)$, and we would like to compare this map to the natural isogeny with kernel $H$. However, by \cite[1.4.2.3]{cmlifting}, passing to the special fiber gives a faithful functor from $p$-divisible groups over the complete local Noetherian ring $\E(B\Sigma_{p^k})/I_{}$ to the category of $p$-divisible groups over its residue field, so it suffices to show the claim modulo the maximal ideal. This is now obvious, since both maps are given by $x \mapsto x^{p^k}$.
\end{proof}
\end{comment}

Fix a map $\Z_p^t=\lat' \to A$ with $A$ finite abelian, and let $\qz' = (\lat')^*$. Let $p^{k'} = p^k|A|$. Recall from \cite{genstrickland} that there is a formal scheme $\Sub_{p^{k'}}^{A^*}(\G \oplus \qz')$  that associates to any $\E$-algebra $R$ the collection of subgroup schemes $H \subset R \otimes (\G \oplus \qz')$ of order $p^{k'}$ which project onto $A^*$ via the natural map $\G \oplus \qz' \lra{} \qz'$. The main result of \cite{genstrickland} implies that the ring of functions on $\Sub_{p^{k'}}^{A^*}(\G \oplus \qz')$ is isomorphic to $\E(BA \wr \Sigma_{p^k})/J_{}$. Our extension of \Cref{ahs} can now be stated as follows. 

\begin{thm}\label{abeliantotalpowerop}
The additive total power operation modulo the transfer
\[\E(BA) \lra{\Po_{p^k}/J} \E(BA \wr \Sigma_{p^k})/J\]
is the ring of functions on the map
\[\Q_{p^k}^*\colon \Sub_{p^{k'}}^{A^*}(\G\oplus \qz') \lra{} \hom(A^*, \G)\]
given by
\[(H \subset R\otimes(\G \oplus \qz')) \mapsto (A^* \lra{} R\otimes \G/K \cong R \tensor[^{P_{p^k}/I}]{\otimes}{_\E} \G),\]
where $K$ is the kernel in the map of short exact sequences
\[\xymatrix{K \ar[r] \ar[d]_{=} & H \ar[r] \ar[d] & A^* \ar[d] \\ K \ar[r] & \G \ar[r] & \G/K}\]
and $H$ maps to $\G$ through the projection $\G \oplus \qz' \twoheadrightarrow \G$.
\end{thm}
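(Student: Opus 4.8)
The plan is to use the compatibility square \eqref{relativeisogeny} to reduce the statement to an isomorphism of rings that is essentially algebro-geometric and is supplied by \cite{genstrickland}. Write $S = \E(BA\wr\Sigma_{p^k})/J$. The algebro-geometric input is this: sending a subgroup $H\subseteq R\otimes(\G\oplus\qz')$ of order $p^{k'}$ with $\mathrm{pr}(H) = A^*$ to the pair $(K,\phi_H)$, where $K = H\cap(R\otimes\G)\in\Sub_{p^k}(\G)(R)$ and $\phi_H\colon A^*\cong H/K\lra{}(R\otimes\G)/K$ is the map of the statement, with inverse $(K,\phi)\mapsto\{(x,a)\in R\otimes(\G\oplus\qz'):\bar x = \phi(a)\}$, defines a natural isomorphism
\[
\Sub_{p^{k'}}^{A^*}(\G\oplus\qz')\cong\hom(A^*,\G)\times_{\Spf\E,\,P/I}\Sub_{p^k}(\G),
\]
where the $\Sub_{p^k}(\G)$-leg of the fiber product is the map $\Spf(\E(B\Sigma_{p^k})/I)\lra{}\Spf\E$ dual to $P/I$, so that $(R\otimes\G)/K\cong R\tensor[^{P/I}]{\otimes}{_{\E}}\G$. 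Taking functions and using the identification $S\cong\Sect(\Sub_{p^{k'}}^{A^*}(\G\oplus\qz'))$ of \cite{genstrickland}, we obtain a ring isomorphism $S\cong\E(BA)\tensor[^{P/I}]{\otimes}{_{\E}}\E(B\Sigma_{p^k})/I$ under which the projection of the fiber product to $\hom(A^*,\G)$ corresponds to the canonical map $a\mapsto a\otimes 1$; under this identification $\Sect(\Q^*)$ is exactly that canonical map.

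Now I would invoke \eqref{relativeisogeny}: its commutativity exhibits $\Po/J$ as the composite
\[
\E(BA)\lra{}\E(BA)\tensor[^{P/I}]{\otimes}{_{\E}}\E(B\Sigma_{p^k})/I\lra{\eta}S,
\]
where the first map is again $a\mapsto a\otimes 1$ and $\eta$ is the map furnished by \myref{compatibletransferideal}. Thus the theorem is equivalent to the assertion that $\eta$ agrees with the isomorphism of the previous paragraph --- equivalently, that $\eta$ is the identity once both its source and its target are identified with the ring of functions on $\hom(A^*,\G)\times_{\Spf\E,P/I}\Sub_{p^k}(\G)$. For this one post-composes with the untwisting map $S\lra{}\E(BA)\otimes_{\E}^{i}\E(B\Sigma_{p^k})/I$ of \eqref{relativeisogeny}: by \eqref{relativeisogeny} the resulting composite is the additive total power operation $P/I$, which by \myref{ahs} is the ring of functions on $(f,K)\mapsto(A^*\lra{f}\G\twoheadrightarrow\G/K)$, precisely the map read off from the algebro-geometric picture. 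Should the untwisting map fail to pin down $\eta$ on the nose, one falls back --- as in the proof of \myref{ahs} --- on the faithfulness of the special-fiber functor for $p$-divisible groups over the complete local noetherian ring $S$ \cite[1.4.2.3]{cmlifting}, reducing to $A$ cyclic and to the residue field, where the comparison becomes a transparent computation with Frobenius-twists.

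The main obstacle is the bookkeeping of the several $\star$-isomorphisms and twisted $\E$-algebra structures: one must verify that the algebro-geometric isomorphism $\Sub_{p^{k'}}^{A^*}(\G\oplus\qz')\cong\hom(A^*,\G)\times_{\Spf\E,P/I}\Sub_{p^k}(\G)$ is compatible with the identification of \cite{genstrickland} and with the map $\eta$ of \eqref{relativeisogeny}, that under it $\Q^*$ becomes the projection, and --- the step most closely analogous to the delicate point in the proof of \myref{ahs} --- that $\eta$ is genuinely the identity rather than a twist of it. Once these normalizations are fixed, the identity $\Po/J = \Sect(\Q^*)$ follows formally.
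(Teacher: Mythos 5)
Your reduction of the theorem to a single assertion is accurate: writing $\eta\colon \E(BA)\tensor[^{P/I}]{\otimes}{_{\E}}\E(B\Sigma_{p^k})/I \to \E(BA\wr\Sigma_{p^k})/J$ for the $\E(B\Sigma_{p^k})/I$-linear extension of $\Po/J$, the statement is indeed equivalent to $\eta$ agreeing with the algebro-geometric isomorphism of \cite[Prop.\ 6.5]{genstrickland} (this equivalence is exactly why the paper can deduce, as a corollary \emph{of} the theorem, that $\eta$ is an isomorphism). The problem is that both of your proposed methods for verifying this assertion fail. First, post-composing with the untwisting map $\E(BA\wr\Sigma_{p^k})/J \to \E(BA)\otimes_{\E}^{i}\E(B\Sigma_{p^k})/I$ and invoking \myref{ahs} only determines $\eta$ up to the kernel of that map, and this map is \emph{not injective in general} --- the paper states this explicitly, and it is precisely the obstruction that makes the theorem a genuine extension of Ando--Hopkins--Strickland rather than a formal consequence of it. Second, the fallback to \cite[1.4.2.3]{cmlifting} does not apply here: that result gives faithfulness of the special-fiber functor on \emph{homomorphisms of $p$-divisible groups}, which is what the proof of \myref{ahs} compares (two isogenies of formal groups over $\Spf(\E(B\Sigma_{p^k})/I)$). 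What you need to compare are two ring maps $\E(BA)\rightrightarrows S$, i.e.\ two $S$-points of $\hom(A^*,\G)$ --- torsion points of a formal group over a complete local ring. These are emphatically not detected on the residue field: for $A$ cyclic every $\kappa$-point of $\G[p^r]$ is the zero point, so the special fiber sees nothing.

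The paper gets around this by constructing a \emph{different} injection. Using \myref{factorizationlemma}, it chooses a family $\Fam{A\wr\Sigma_{p^k}}$ of abelian subgroups $M\subseteq A\wr\Sigma_{p^k}$ hitting every transitive conjugacy class, proves (via character theory and freeness of $\E(BA\wr\Sigma_{p^k})/J$) that the restriction $\E(BA\wr\Sigma_{p^k})/J \hookrightarrow \prod_M \E(BM)/I_M$ is injective, and then compares $\Po/J$ and $\Q$ one factor at a time. Each factor is handled by running \myref{ahs} for the abelian group $M$ itself (not for $A$), together with \myref{liftingcriterionlemma} and the surjectivity of $\E(BM)\to\E(BK)$, where $K=M\cap A$. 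So the missing ingredient in your argument is a sufficiently large target into which $\E(BA\wr\Sigma_{p^k})/J$ injects and on which \myref{ahs} has traction; the untwisted tensor product is not such a target, and supplying one requires the group-theoretic analysis of Section~\ref{sec:grouptheory} that your proposal does not use.
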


The content of the theorem is that two maps between $\E(BA)$ and $\E(BA \wr \Sigma_{p^k})/J$ are in fact the same map. The map $\Q_{p^k}$ is defined algebro-geometrically and the map $\Po_{p^k}/J$ is the additive total power operation. By \Cref{ahs}, we know that the maps are equal after mapping further to $\E(BA) \otimes_{\E} \E(B\Sigma_{p^k})/I$. But the map 
\[
\E(BA \wr \Sigma_{p^k})/J \lra{} \E(BA) \otimes_{\E} \E(B\Sigma_{p^k})/I 
\]
is not injective in general. The proof will proceed by building a ring that $\E(BA \wr \Sigma_{p^k})/J$ injects into, and that can be  attacked using \Cref{ahs}.

Before giving the proof, we draw a consequence that is of interest in its own right.
\begin{cor}
The map 
\[
\Po_{p^k}/J \otimes_{\E}^{P_{p^k}/I} \E(B\Sigma_{p^k})/I \colon \E(BA) \otimes_{\E}^{P_{p^k}/I} \E(B\Sigma_{p^k})/I_{} \lra{} \E(BA \wr \Sigma_{p^k})/J_{}
\]
given in Diagram \eqref{relativeisogeny} is an isomorphism. 
\end{cor}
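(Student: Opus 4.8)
The plan is to prove this algebro-geometrically, feeding \myref{abeliantotalpowerop} into the dictionary between the $E$-cohomology of these groups and moduli of subgroups. Recall from the table in Section~\ref{sec:etheoryrecolletions} and from \cite{genstrickland} that $\E(B\Sigma_{p^k})/I$, $\E(BA)$, and $\E(BA\wr\Sigma_{p^k})/J$ are the rings of functions on $\Sub_{p^k}(\G)$, $\hom(A^*,\G)$, and $\Sub_{p^{k'}}^{A^*}(\G\oplus\qz')$ respectively. The $\E$-algebra structure on $\E(B\Sigma_{p^k})/I$ coming from $P/I$ is, by construction, the one classifying the universal quotient $\G/H_u$, where $H_u\subset\G$ is the universal subgroup of order $p^k$ over $\Sub_{p^k}(\G)$. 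Consequently the source $\E(BA)\otimes_{\E}^{P/I}\E(B\Sigma_{p^k})/I$ is the ring of functions on the relative mapping scheme over $\Sub_{p^k}(\G)$ whose $R$-points, for $R$ equipped with a subgroup $H\subset R\otimes\G$ of order $p^k$, are the homomorphisms $A^*\to(R\otimes\G)/H$.

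The next step is to identify the map of \eqref{relativeisogeny} in these terms. It is the unique $\E(B\Sigma_{p^k})/I$-algebra map restricting to $\Po/J$ on $\E(BA)$, so by \myref{abeliantotalpowerop} (together with \myref{compatibletransferideal} for the algebra structure) it is the ring of functions on the morphism over $\Sub_{p^k}(\G)$ that sends a subgroup $L\subset R\otimes(\G\oplus\qz')$ projecting onto $A^*$ to the pair consisting of $K:=L\cap(R\otimes\G)$, a subgroup of $R\otimes\G$ of order $p^k$, and the homomorphism $A^*\cong L/K\hookrightarrow(R\otimes(\G\oplus\qz'))/K = (R\otimes\G)/K\oplus\qz'\twoheadrightarrow(R\otimes\G)/K$.

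To see that this morphism is an isomorphism I would construct its inverse pointwise. Given a subgroup $K\subset R\otimes\G$ of order $p^k$ and a homomorphism $f\colon A^*\to(R\otimes\G)/K$, let $L_f$ be the preimage of the graph $\{(f(a),a)\mid a\in A^*\}$ under the projection $R\otimes(\G\oplus\qz')\twoheadrightarrow(R\otimes\G)/K\oplus\qz'$. A direct check shows that $L_f$ has order $p^k|A| = p^{k'}$, projects onto $A^*$, satisfies $L_f\cap(R\otimes\G) = K$, and that $f\mapsto L_f$ is natural in $R$ and inverse to $L\mapsto(K,f)$. Passing to rings of functions then yields the claim.

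I expect the only real work to be the bookkeeping in the second step: checking that the map of \eqref{relativeisogeny} is precisely the geometric morphism above amounts to unwinding the two $\E$-algebra structures on $\E(B\Sigma_{p^k})/I$, the canonical $\star$-isomorphism $(R\otimes_{\E}\G)/K\cong R\tensor[^{P/I}]{\otimes}{_{\E}}\G$, and the description of \myref{compatibletransferideal}, but none of this is deep once \myref{abeliantotalpowerop} is in hand. If one prefers to avoid matching the explicit morphisms, an alternative is to note that both rings are finitely generated free modules over the complete local ring $\E(B\Sigma_{p^k})/I$ and that the map is $\E(B\Sigma_{p^k})/I$-linear, so it suffices to check it is an isomorphism after reduction modulo the maximal ideal, where one is again in the situation handled in the proof of \myref{ahs}.
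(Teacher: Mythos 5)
Your argument is correct and follows the paper's proof essentially verbatim: the paper also works over $\Sub_{p^k}(\G)$, identifies the source with $\hom(A^*,\G/U)$ for $U$ the universal subgroup, invokes \myref{abeliantotalpowerop} to match the map of \eqref{relativeisogeny} with the geometric morphism, and observes that the inverse is given by the pullback of $A^*\to \G/U\oplus\qz'$ along $\G\oplus\qz'\to\G/U\oplus\qz'$ --- which is exactly your graph construction $L_f$, except that the paper outsources this scheme isomorphism to Proposition 6.5 of \cite{genstrickland} rather than verifying it by hand. The only caveat is that your optional fallback via reduction modulo the maximal ideal would need a separate rank count for injectivity and is not what the proof of \myref{ahs} actually provides, but your primary argument does not rely on it.
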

\begin{proof}
This is the map that occurs in \Cref{abeliantotalpowerop} base changed to $\E(B\Sigma_{p^k})/I$. By \Cref{abeliantotalpowerop}, there is a commutative diagram
\[
\xymatrix{\E(BA) \ar[r]^-{\Po_{p^k}/J} \ar[d]_{\cong} & \E(BA \wr \Sigma_{p^k})/J_{} \ar[d]^{\cong} \\ \Gamma \hom(A^*, \G) \ar[r]^-{\Q_{p^k}} & \Gamma \Sub_{p^{k'}}^{A^*}(\G\oplus \qz'),}
\]
where the vertical isomorphisms are canonical. Thus it is enough to show that
\[
\Q_{p^k} \otimes_{\E}^{P_{p^k}/I} \E(B\Sigma_{p^k})/I_{}
\]
is an isomorphism. Proposition 6.5 of \cite{genstrickland} implies that it is an isomorphism.
\end{proof}

\begin{comment}
\begin{cor}
The ring $\E(B\Sigma_{p^k})/I_{}$ is not flat via the $\E$-module structure given by $\Po(A)$
\end{cor}
\begin{proof}\marginpar{This argument is false.}
If $\E(B\Sigma_{p^k})/I_{}$ is flat as an $\E$-module via $\Po$, the functor
\[X \mapsto \E(X) \otimes_{\E}^{\Po} \E(B\Sigma_{p^k})/I_{}\]
would be cohomology theory and the natural transformation
\[\E(-) \otimes_{\E}^{\Po} \E(B\Sigma_{p^k})/I_{} \lra{} \E(-) \otimes_{\E}^i \E(B\Sigma_{p^k})/I_{} \]
would be a map of cohomology theories. Since this map is an isomorphism for $X = *$, it is an isomorphism for all $X$, contradicting \Cref{ahs}.
\end{proof}
\end{comment}

The idea of the proof of \Cref{abeliantotalpowerop} is to reduce the claim to the result of Ando--Hopkins--Strickland by probing $\E(BA \wr \Sigma_{p^k})/J$ by an appropriate family $\Fam{A \wr \Sigma_{p^k}}$ of abelian subgroups of $A \wr \Sigma_{p^k}$ that captures all of the transitive conjugacy classes. For each of the abelian subgroups $M \in \Fam{A \wr \Sigma_{p^k}}$, we show how the composite
\[
\E(BA) \lra{\Po_{p^k}/J} \E(BA \wr \Sigma_{p^k})/J \lra{} \E(BM)/I_M
\]
can be attacked using \Cref{ahs}, where $I_M$ is the ideal in $\E(BM)$ generated by the image of the transfer along all proper subgroups of $M$. This ring has been studied in \cite{ahs} and is closely related to $M$-level structures on $\G$.

\begin{definition}
For each $[\al] \in \hom(\lat,A\wr\Sigma_{p^k})^{\trans}_{/\sim}$, choose a representative that satisfies \Cref{factorizationlemma}. Let $\Fam{A \wr \Sigma_{p^k}}$ be the set of images of these representatives. 
%$M = \langle \al^c \rangle \subset A \wr \Sigma_{p^k}$ generated by the representatives $\al^c \in [\al]$ given in CITE, where $\alpha$ runs through the transitive conjugacy classes of $A \wr \Sigma_{p^k}$.
\end{definition} 
By definition these subgroups fit into the commutative diagram
\begin{equation}\label{subgroupdiagram}
\xymatrix{K \ar[r]^{\iota} \ar[d]_{=} & M \ar[d] \ar[rd]^{\pi \al} \\
K \ar[r] \ar[d] & K \wr \Sigma_{p^k} \ar[r] \ar[d] & \Sigma_{p^k} \ar[d]^{=} \\
A \ar[r] & A \wr \Sigma_{p^k} \ar[r] & \Sigma_{p^k},}
\end{equation}
where $K$ denotes the pullback or, equivalently, the kernel of $\pi \alpha$ and $M \in \Fam{A \wr \Sigma_{p^k}}$.

\begin{lemma}\label{genstricklandlemma}
For any abelian group $A$, the map 
\[
\E(BA \wr \Sigma_{p^k})/J_{} \lra{} \Prod{{M \in \Fam{A \wr \Sigma_{p^k}}}}\E(BM)/I_{M}
\]
induced by restriction is injective.
\end{lemma}
\begin{proof}
We first need to show that the map exists. The map exists if the homotopy pullback of the diagram
\[
\xymatrix{ & B(A \wr (\Sigma_i \times \Sigma_j)) \ar[d] \\ BM \ar[r] & B(A \wr \Sigma_{p^k})} 
\]
is a disjoint union of classifying spaces of proper subgroups of $M$ when $i,j>1$ and $i+j = p^k$. The classifying spaces in the homotopy pullback are of the form 
\[
B(gMg^{-1} \cap (A \wr (\Sigma_i \times \Sigma_j)),
\]
where $g$ is a representative of a double coset in 
\[
M\backslash (A \wr \Sigma_{p^k})/(A \wr (\Sigma_i \times \Sigma_j)).
\]
The conjugate of a transitive subgroup is transitive and $A \wr (\Sigma_i \times \Sigma_j)$ is not transitive, thus the intersection cannot be all of $M$.

Since $\E(BA \wr \Sigma_{p^k})/J$ is a finitely generated free $\E$-module (Proposition 5.3 in \cite{genstrickland}), we may check injectivity after applying the character map. This allows us to check the claim on class functions, where it follows immediately from the construction of $\Fam{A \wr \Sigma_{p^k}}$.
\end{proof}

We now consider two maps between $M$ and $M\wr \Sigma_{p^k}$. The first is the composite
\[
M \hookrightarrow K \wr \Sigma_{p^k} \hookrightarrow M \wr \Sigma_{p^k},
\]
and the second is the composite
\[
\xymatrix{
M \ar[r]^-{id \times \pi \al} & M \times \Sigma_{p^k} \ar[r]^-{\Delta} & M \wr \Sigma_{p^k}.}
\]
Applying $E$-cohomology to these maps gives the same map because they are conjugate:
%It turns out that the map $\E(BM) \to \E(BA)$ appearing in \eqref{idea} is not always surjective and that the diagram is not necessarily commutative. We give an easy criterion for when they are.  

\begin{lemma}\label{liftingcriterionlemma}
The square 
\[\xymatrix{\E(BM \wr \Sigma_{p^k})/J_{} \ar[r] \ar[d] & \E(BM) \otimes_{\E}^i \E(B\Sigma_{p^k})/I_{} \ar[d] \\
\E(BK \wr \Sigma_{p^k})/J_{} \ar[r] & \E(BM)/I_{M}}\]
commutes, where the maps are induced by the maps described just above.
\end{lemma}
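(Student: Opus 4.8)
The plan is to identify each of the four arrows of the square with a map induced, after passing to the indicated quotients, by one of the two conjugate maps $M \to M \wr \Sigma_{p^k}$ recalled just above the statement, and then to read off commutativity from the fact that these two maps induce the same map in $E$-theory. First I would pin down the arrows. The top horizontal is the map of \myref{compatibletransferideal} for $G = M$: restriction along the diagonal inclusion $M \times \Sigma_{p^k} \hookrightarrow M \wr \Sigma_{p^k}$ followed by reduction of the $\Sigma_{p^k}$-tensor-factor modulo $I$. The right vertical is pullback along $B(\id \times \pi\al)\colon BM \to BM \times B\Sigma_{p^k}$, i.e. $a \otimes b \mapsto a \cdot (\pi\al)^*(b)$, followed by the quotient by $I_M$; hence the top-then-right composite computes the map induced on $E$-cohomology by $B$ of the second composite $M \lra{\id \times \pi\al} M \times \Sigma_{p^k} \lra{\Delta} M \wr \Sigma_{p^k}$, postcomposed with $\E(BM) \twoheadrightarrow \E(BM)/I_M$. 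On the other side the left vertical is restriction along $BK \wr \Sigma_{p^k} \hookrightarrow BM \wr \Sigma_{p^k}$ and the bottom is restriction along $BM \hookrightarrow BK \wr \Sigma_{p^k}$ followed by the quotient by $I_M$, so the left-then-bottom composite computes the map induced by $B$ of the first composite $M \hookrightarrow K \wr \Sigma_{p^k} \hookrightarrow M \wr \Sigma_{p^k}$, again postcomposed with that projection.

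The real content is checking that all four arrows descend to the quotients. For the top arrow this is exactly \myref{compatibletransferideal}. The other three are double-coset computations. The right arrow is defined because $(\pi\al)^*$ carries $I$ into $I_M$: the preimage under $\pi\al$ of a Young subgroup $\Sigma_i \times \Sigma_j$ is a proper subgroup of $M$, since $\pi\al(M)$ is transitive and hence not contained in any $\Sigma_i \times \Sigma_j$, so the Mackey formula rewrites the restriction of a transfer from $\Sigma_i \times \Sigma_j$ as a sum of transfers from proper subgroups of $M$. The left arrow is defined because the subgroup intersections $K \wr \Sigma_{p^k} \cap {}^{g}(M \wr (\Sigma_i \times \Sigma_j))$ appearing in the double-coset formula are, up to conjugacy, contained in Young subgroups $K \wr (\Sigma_{i'} \times \Sigma_{j'})$, so their transfers land in $J$. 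The bottom arrow is handled by the same proper-subgroup argument as the right one, using that $M$ is a transitive subgroup via \myref{factorizationlemma}.

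Once the arrows are known to be well defined, the lemma follows formally: since $\E(BM \wr \Sigma_{p^k}) \twoheadrightarrow \E(BM \wr \Sigma_{p^k})/J$ is surjective, it suffices to check that the two composites around the square agree after precomposition with this surjection, and there they are precisely the maps induced by $B$ of the two conjugate maps $M \to M \wr \Sigma_{p^k}$ followed by $\E(BM) \twoheadrightarrow \E(BM)/I_M$, hence equal because conjugate group maps induce equal maps on $E$-cohomology of classifying spaces. Alternatively, one can sidestep the double-coset bookkeeping: $\E(BM \wr \Sigma_{p^k})/J$ is a finitely generated free $\E$-module by Proposition 5.3 of \cite{genstrickland}, so by the Hopkins--Kuhn--Ravenel character isomorphism it is enough to verify the identity on generalized class functions, where each arrow becomes honest precomposition on $\hom(\lat,-)_{/\sim}$, the transfer ideals are described explicitly by \myref{transferideal}, and the equality is forced by the conjugacy of the two maps $M \to M \wr \Sigma_{p^k}$. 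I expect no deep obstacle here; the only real work is matching each categorical arrow with the space-level map that induces it and carrying out the Mackey-type checks of well-definedness, after which commutativity is automatic.
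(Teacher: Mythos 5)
Your strategy is the paper's: both composites around the square are induced, before passing to the quotients, by the two maps $M \rightrightarrows M\wr\Sigma_{p^k}$, and commutativity follows because those two maps are conjugate. The difference lies in what is actually verified. The paper's proof is devoted entirely to the conjugacy: the two maps have the same projection to $\Sigma_{p^k}$, and in each case the kernel $K$ lands in $M\wr\Sigma_{p^k}$ through the diagonal $K\to K\wr\Sigma_{p^k}$ by the identity (two small commutative diagrams); by the classification of transitive conjugacy classes in Section \ref{sec:grouptheory} this forces the maps to be conjugate, and the passage to quotients is then dispatched in one sentence. You instead take the conjugacy as given from the text preceding the lemma and spend your effort on the Mackey/double-coset checks that the four arrows descend to the quotients --- checks the paper handles elsewhere (\myref{compatibletransferideal}, and the remark in the proof of \myref{genstricklandlemma} that the map to $\E(BM)/I_M$ exists because $M$ is transitive). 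Those well-definedness arguments of yours are correct, as is the formal step of testing equality after precomposing with the surjection from $\E(BM\wr\Sigma_{p^k})$. The one thing missing for a self-contained argument is the conjugacy itself, which is the entire content of the paper's proof and is not purely formal: it uses that $K$ sits diagonally in both composites, which depends on the choice of representative for $M$ via \myref{factorizationlemma}. As written, your proof and the paper's assertion just before the lemma each defer that verification to the other; you should include the two-diagram check (or an equivalent appeal to \myref{subs}) to close the loop.
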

\begin{proof}
We show that the two maps $M \rightrightarrows M \wr \Sigma_{p^k}$ are conjugate. The composites 
\[
M \rightrightarrows M \wr \Sigma_{p^k} \rightarrow \Sigma_{p^k}
\]
coincide by construction. Thus, by \cref{subs}, it suffices to show that the kernel $K$ factors through $K \lra{\Delta} K\wr \Sigma_{p^k}$ by the identity map in each case. But this follows from the commutativity of the following two diagrams:
\[\xymatrix{K \ar[r]^{\iota} \ar[dd]_{\iota} & M \ar[ddr] \ar[d] & & K \ar[r] \ar[d]^{=} \ar@/_1pc/[dd]_{\iota} & M \ar[ddr] \ar[d] \\
& M \times \Sigma_{p^k} \ar[d] & & K \ar[r] \ar[d] & K \wr \Sigma_{p^k} \ar[rd] \ar[d] \\
M \ar[r]^-{\Delta} & M \wr \Sigma_{p^k} \ar[r] & \Sigma_{p^k} & M \ar[r]^-{\Delta} & M \wr \Sigma_{p^k} \ar[r] & \Sigma_{p^k}.}\]
Thus we have a commutative diagram in $E$-cohomology without taking any quotients. For each ring we may take the quotient by the appropriate transfer ideal to get the square in the statement of the lemma.
\begin{comment}
The condition immediately implies the first claim, so we only need to verify commutativity. By CITE, it suffices to show that the two maps $K \to M \rightrightarrows M\wr\Sigma_{p^k}$ corresponding to going around the square in two ways coincide, which in turn is a consequence of the commutativity of the following two diagrams: 
\[\xymatrix{K \ar[r]^{\iota} \ar[dd]_{\iota} & M \ar[ddr] \ar[d] & & K \ar[r] \ar[d] \ar@/_1pc/[dd]_{\iota} & M \ar[ddr] \ar[d] \\
& M \times \Sigma_{p^k} \ar[d] & & A \ar[r] \ar[d] & A \wr \Sigma_{p^k} \ar[rd] \ar[d] \\
M \ar[r] & M \wr \Sigma_{p^k} \ar[r] & \Sigma_{p^k} & M \ar[r] & M \wr \Sigma_{p^k} \ar[r] & \Sigma_{p^k}.}\]
\end{comment}
\end{proof}

We are now ready to put the pieces together to prove the main theorem of this section. 

\begin{proof}[Proof of \Cref{abeliantotalpowerop}]
By \Cref{genstricklandlemma}, we may reduce to showing that the following two composites coincide
\[
\xymatrix{\E(BA) \ar@<0.5ex>[r]^-{\Po_{p^k}/J} \ar@<-0.5ex>[r]_-{\Q_{p^k}} & \E(BA \wr \Sigma_{p^k})/J_{} \text{ } \ar@{^{(}->}[r] & \Prod{M \in \Fam{A \wr \Sigma_{p^k}}}\E(BM)/I_{M}.}
\]
We will prove this one factor at a time. Let $M \in \Fam{A \wr \Sigma_{p^k}}$ and consider the diagram
\[\xymatrix{\E(BM) \ar@<0.5ex>[r]^-{\Po_{p^k}/J} \ar@<-0.5ex>[r]_-{\Q_{p^k}} \ar@{->>}[d]_f & \E(BM \wr \Sigma_{p^k})/J_{} \ar[r] \ar[d] & \E(BM) \otimes_{\E}^i \E(B\Sigma_{p^k})/I_{} \ar[d] \\
\E(BK) \ar@<0.5ex>[r]^-{\Po_{p^k}/J} \ar@<-0.5ex>[r]_-{\Q_{p^k}} & \E(BK \wr \Sigma_{p^k})/J_{} \ar[r] & \E(BM)/I_{M} \\
\E(BA) \ar[u] \ar@<0.5ex>[r]^-{\Po_{p^k}/J} \ar@<-0.5ex>[r]_-{\Q_{p^k}} & \E(BA\wr \Sigma_{p^k})/J. \ar[u] \ar[ur]}\]
The diagram commutes, where the $\Q_{p^k}$'s commute with $\Q_{p^k}$'s and the $\Po_{p^k}/J$'s commute with $\Po_{p^k}/J$'s; for the top right square this follows from \Cref{liftingcriterionlemma}. The composites on the top row are equal by \Cref{ahs}. The surjectivity of $f$ implies that the middle two composites are equal. But now that these are equal, the composites from $\E(BA)$ to $\E(BM)/I_M$ must be equal. This proves the claim.
\begin{comment}
 with the maps induced by the ones coming from \Cref{factorizationlemma}. The lower triangle and left square commute by naturality. Applying \Cref{liftingcriterionlemma} to the trivial diagram
\[\xymatrix{K \ar[d]_= \ar[r] & M \\
K \ar[ru],}\]
shows that the right square also commutes and that both left vertical maps are surjective. The claim now follows from \Cref{ahs} for the two composites of the top row.
\end{comment}
\end{proof}

\begin{comment}
\begin{remark}\marginpar{Expand.}
The proof is summarized algebro-geometrically in the following commutative diagram. 
\[\xymatrix{\hom(M,\G/L) & \hom(M,\G) \times \Sub_{p^k}(\G) \ar[l] \\
\hom(K,\G/L) \ar[u] \ar[d] & \Level(M,\G) \ar[l] \ar[dl] \ar[u] \\
\hom(A,\G/L).}\]
\end{remark}
\end{comment}

\section{The character of the total power operation}\label{sec:proofgeneral}
In this section we establish the relevance of the multiplicative natural transformations constructed in \Cref{sec:rationaltrafo}. The goal of this section is to prove the following theorem. 
\begin{thm}\label{clmainthm}
Let $\Po_{m} \colon \E(BG) \lra{} \E(BG\wr \Sigma_{m})$ be the total power operation for Morava $E$-cohomology applied to $BG$, and let $\chi\colon\E(BG) \lra{} Cl_n(G,C_0)$ be the character map. For all $n \geq 0$, all $m \geq 0$, and any section $\phi \in \Gamma(\Sub(\qz), \Isog(\qz))$, there is a commutative diagram
\[
\xymatrix{\E(BG) \ar[r]^-{\Po_{m}} \ar[d]_{\chi} & \E(BG \wr \Sigma_{m}) \ar[d]^{\chi} \\ Cl_n(G,C_0) \ar[r]^-{\Po_{m}^{\phi}} & Cl_n(G\wr \Sigma_{m},C_0),}
\]
which is natural in $G$.
\end{thm}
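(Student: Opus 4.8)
The plan is to reduce the statement in stages to the case of an abelian group evaluated at a transitive conjugacy class, where it becomes a consequence of the Ando--Hopkins--Strickland-type description of the additive total power operation in \myref{abeliantotalpowerop} together with the computation of the isogeny action on $\D$ from Section~\ref{sec:isogenies}. First I would reduce to $m=p^k$. Writing $m=\sum_j a_j p^j$, by \myref{padicsum} the inclusion $\Prod{j}\Sigma_{p^j}^{a_j}\hookrightarrow\Sigma_m$ induces a surjection on the associated sets of sums of subgroups, hence an injection $Cl_n(G\wr\Sigma_m,C_0)\hookrightarrow\Prod{j}Cl_n(G\wr\Sigma_{p^j}^{a_j},C_0)$. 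Under this injection the restriction of $\Po_m$ is expressed through the $\Po_{p^j}$ by \myref{porelations}, while the restriction of $\Po^{\phi}_m$ is expressed through the $\Po^{\phi}_{p^j}$ by the multiplicativity and additivity relations for $\Po^{\phi}$ established in Section~\ref{sec:rationaltrafo}. Since $\chi$ is natural, it therefore suffices to prove the theorem for $m=p^k$.

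Next I would induct on $k$ and simultaneously reduce to abelian groups. A class function on $G\wr\Sigma_{p^k}$ is its tuple of values on $\hom(\lat,G\wr\Sigma_{p^k})_{/\sim}$. If a class $[\beta]$ is not transitive then $F([\beta])$ has more than one summand, so by \myref{transferideal} (and the compatibility of $\chi$ with transfers, Theorem~D of \cite{hkr}) its value under $\chi\Po_{p^k}$ and under $\Po^{\phi}_{p^k}\chi$ is detected after restriction along a Young inclusion $G\wr(\Sigma_i\times\Sigma_j)\hookrightarrow G\wr\Sigma_{p^k}$ with $0<i,j<p^k$; on both sides this restriction is governed by $\Po_i,\Po_j$ resp.\ $\Po^{\phi}_i,\Po^{\phi}_j$ (using \myref{porelations} and the relations of Section~\ref{sec:rationaltrafo}), which agree by the inductive hypothesis, applied after rerunning the reduction to prime powers at the level of class functions. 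Hence it remains to prove $\chi\circ(\Po_{p^k}/J)=(\Po^{\phi}_{p^k}/J)\circ\chi$ as maps into $Cl_n(G\wr\Sigma_{p^k},C_0)/J\cong\Prod{\Sub_{p^k}(\qz,G)}C_0$. Fixing a transitive class with invariant $(H,[\al])\in\Sub_{p^k}(\qz,G)$ and $\bar{\al}\colon\lat_H\to G$, \myref{subs} and \myref{factorizationlemma} provide a representative factoring through $A\wr\Sigma_{p^k}$ for $A=\im\bar{\al}\subseteq G$, which is abelian, so by naturality of $\chi$, of $\Po$, and of $\Po^{\phi}$, the value at this class for $G$ equals the value at the corresponding class for $A$. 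We are thus reduced to $G=A$ abelian, $m=p^k$, and a transitive class $(H,[\al])$ with $\al\colon\lat_H\twoheadrightarrow A$.

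For the abelian case I would invoke \myref{abeliantotalpowerop}. Put $\lat'=\lat_H$ and $\qz'=\lat_H^*$, so $\al^*$ embeds $A^*$ into $\qz'$, and let $p^{k'}=p^k|A|$; then \myref{abeliantotalpowerop} identifies $\Po_{p^k}/J\colon\E(BA)\to\E(BA\wr\Sigma_{p^k})/J\cong\Sect(\Sub_{p^{k'}}^{A^*}(\G\oplus\qz'))$ with the map of functions induced by $\Q^*$. It then remains to evaluate both composites at the $C_0$-point of $\Sub_{p^{k'}}^{A^*}(\G\oplus\qz')$ that the character theory of \cite{hkr} attaches to $(H,[\al])$. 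Using the universal isomorphism $t\colon\qz\overset{\cong}{\to}\G$ carried by $C_0$ (Section~\ref{sec:isogenies}), this point has kernel component $K=t(H)\subseteq\G$ — exactly the subgroup classified on $\E(B\Sigma_{p^k})/I$ by $[\pi\beta]$, so that $Q_H$ and $P/I$ pull back to the same deformation $\G/K$ — and its $A^*\to\G/K$ component is $A^*\overset{\al^*}{\hookrightarrow}\qz'\cong\qz/H\to\G/K$. Plugging into the formula for $\Q^*$ presents $\chi(\Po_{p^k}/J(x))(H,[\al])$ as the value of $x$ at the point $A^*\overset{\al^*}{\hookrightarrow}\qz'\cong\qz/H\to\G/K\cong\D\tensor[^{Q_H}]{\otimes}{_{\E}}\G$, whereas $\Po^{\phi}_{p^k}/J(\chi x)(H,[\al])=\phi_H^*\bigl(\chi(x)([\al\psi_H^*])\bigr)$ and $\chi(x)([\al\psi_H^*])$ is the value of $x$ at $A^*\overset{\al^*}{\hookrightarrow}\qz'\overset{\psi_H}{\to}\qz\overset{t}{\to}\G$. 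These agree by the corollary following \myref{action}: the isogeny action $\phi_H^*$ on $C_0$ precisely implements the passage from the universal level structure $t$ to $\qz\overset{q_H}{\to}\qz/H\to\G/t(H)\cong\D\tensor[^{Q_H}]{\otimes}{_{\E}}\G$, which is exactly what is needed once one recalls that $\phi_H=\psi_H q_H$. Assembling the three reductions then yields the theorem.

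I expect the last paragraph to be where the real work lies: matching the geometric description of $\Q^*$ in \myref{abeliantotalpowerop} term by term with the isogeny-twisted formula defining $\Po^{\phi}$ forces one to keep careful track of the universal isomorphism on $C_0$, of the subgroup $t(H)$, of the several Pontryagin duals $q_H,\psi_H,\al^*$, and of the identification of the Drinfeld classifying map $Q_H$ with the power-operation structure $P/I$ over the relevant point — and, along the way, to verify that the character-theoretic presentations of $\E(B\Sigma_{p^k})/I$ and $\E(BA\wr\Sigma_{p^k})/J$ used in the reductions are the ones compatible with the algebro-geometric descriptions underlying \myref{abeliantotalpowerop} and \cite{genstrickland}.
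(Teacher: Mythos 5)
Your proposal is correct and follows essentially the same route as the paper: reduce to $m=p^k$ via the $p$-adic expansion, handle the non-transitive classes by induction on $k$, reduce the mod-$J$ statement to abelian groups, and settle the abelian case by matching the Ando--Hopkins--Strickland description of $\Po/J$ from \myref{abeliantotalpowerop} against the isogeny action on $\D$ exactly as in \myref{pomodtransferab}. The only (immaterial) organizational differences are that the paper reduces to abelian groups via the embedding of \myref{embedding} and a cube diagram rather than pointwise through \myref{factorizationlemma}, and that its induction on $k$ detects the non-transitive classes on the factor $\E(BG\wr \Sigma_{p^{k-1}}^{p})$ via \myref{Grewrite} instead of general Young subgroups.
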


We will prove the theorem in three steps. First, our extension of the work of Ando, Hopkins, and Strickland in \Cref{abeliantotalpowerop} can be used to prove the theorem for the additive total power operation applied to finite abelian groups. To extend this to all finite groups, we use a modification of the fact that $\E(BG)$ rationally embeds under the restriction map into $\prod_{A \subseteq G} \E(BA)$, where the product is over all abelian subgroups of $G$. Finally, an inductive argument using character theory extends the result from the additive total power operation to the total power operation.

\begin{prop}\label{pomodtransferab}
For a finite abelian group $A$ the diagram
\begin{equation}\label{eq:abaddpo}
\xymatrix{\E(BA) \ar[r]^-{\Po_{p^k}/J} \ar[d] & \E(BA\wr\Sigma_{p^k})/J \ar[d] \\ Cl_n(A,C_0) \ar[r]^-{\Po_{p^k}^{\phi}/J} & Cl_n(A \wr \Sigma_{p^k},C_0)/J}
\end{equation}
commutes.
\end{prop}
\begin{proof}
By \cref{transferideal}, the terminal object in the square is the product
\[
Cl_n(A \wr \Sigma_{p^k},C_0)/J \cong \Prod{\Sub_{p^k}(\qz,A)} C_0.
\]
Thus it suffices to fix an element $(H, [\al]) \in \Sub_{p^k}(\qz,A)$ and prove the result for the factor corresponding to $(H,[\al])$. Since $\Po_{p^k}^{\phi}/J$ is the rationalization of a map between products of the Drinfeld ring $\D$ by \cref{drinfeldringproducts}, it suffices to replace $C_0$ with $\D$. For $R$ a complete local ring, a map $\D \lra{} R$ out of the factor corresponding to $(H, [\al])$ is the data
\begin{equation} \label{data}
(H \subset \qz, \al\colon\lat_H \lra{} A, \qz \hookrightarrow R \otimes_{\E} \G).
\end{equation}
We are suppressing the data of the Lubin-Tate moduli problem. Note that, since $A$ is abelian, $\al = [\al]$ and that we may replace $\al$ by
\[
\al^*\colon A^* \lra{} \qz/H.
\]
Let $\lat' = \Z_{p}^t$ and $\qz' = (\lat')^*$, where $t$ is greater than or equal to the number of generators of $A$. Let $\lat' \twoheadrightarrow A$ be a surjection. By the algebro-geometric description of $\E(BA \wr \Sigma_{p^k})/J$ in Theorem 7.11 of \cite{genstrickland}, the right vertical map in \eqref{eq:abaddpo} sends the data in \eqref{data} to the pullback
\[
\xymatrix{B \ar[r] \ar[d] & \G \oplus \qz' \ar[d] \\ A^* \ar[r] & (R \otimes_{\E} \G)/H \oplus \qz'.}
\]
The top horizontal map in \eqref{eq:abaddpo} sends this data to the composite 
\begin{equation}\label{eq:theothermap}
A^* \lra{} (R \otimes_{\E} \G)/H \cong R \tensor[^{Q_H}]{\otimes}{_{\E}} \G
\end{equation}
by \Cref{ahs}.

Going around the other way first sends the data to the pair of composites
\[
(A^* \lra{\al^*} \qz/H \lra{\psi_H} \qz, \qz \lra{\psi_{H}^{-1}} \qz/H \lra{} R \tensor[^{Q_H}]{\otimes}{_{\E}} \G)
\]
and then composes them to give
\[
A^* \lra{} R \tensor[^{Q_H}]{\otimes}{_{\E}} \G,
\]
which is the same as the map in \eqref{eq:theothermap}.
\end{proof}

\begin{lemma} \label{embedding}
There is an embedding
\[
p^{-1}\E(BG \wr \Sigma_{p^k})/J \hookrightarrow \Prod{A \subseteq G}p^{-1}\E(BA \wr \Sigma_{p^k})/J.
\]
\end{lemma}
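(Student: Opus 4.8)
The plan is to reduce the statement to a purely combinatorial surjectivity of sets, using the Hopkins--Kuhn--Ravenel character map together with the description of the transfer ideal from \myref{transferideal}.

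First I would set up the reduction to class functions. For every finite group $\Gamma$ the character map $\chi\colon \E(B\Gamma) \lra{} Cl_n(\Gamma, C_0)$ becomes injective after inverting $p$, it commutes with restriction along subgroups, and by Theorem D of \cite{hkr} it intertwines transfers; combined with the double coset formula this shows both that restriction sends the transfer ideal $J \subset \E(BG\wr\Sigma_{p^k})$ into the transfer ideal $J \subset \E(BA\wr\Sigma_{p^k})$ for each $A\subseteq G$, and that $\chi$ carries $J$ into the transfer ideal $J \subset Cl_n(\Gamma\wr\Sigma_{p^k},C_0)$. Since $\D$ is a free $\E$-module of positive rank, $C_0 = p^{-1}\D$ is faithfully flat over $p^{-1}\E$; tensoring the defining short exact sequence of $\E(BG\wr\Sigma_{p^k})/J$ with $C_0$ and using the isomorphism $C_0\otimes_{\E}\E(BG\wr\Sigma_{p^k}) \cong Cl_n(G\wr\Sigma_{p^k},C_0)$ of \cite{hkr} then identifies the transfer ideal in $Cl_n(G\wr\Sigma_{p^k},C_0)$ with $C_0\otimes_{\E} J$, and shows that $p^{-1}\chi$ induces an injection $p^{-1}\E(BG\wr\Sigma_{p^k})/J \hookrightarrow Cl_n(G\wr\Sigma_{p^k},C_0)/J$. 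Using the commutative square relating $\chi$ to restriction, it therefore suffices to prove that
\[
Cl_n(G\wr\Sigma_{p^k},C_0)/J \lra{} \Prod{A\subseteq G} Cl_n(A\wr\Sigma_{p^k},C_0)/J
\]
(the product over subgroups $A$ of $G$) is injective.

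Next I would make this map combinatorial. By \myref{transferideal} and \myref{subs}, the quotient $Cl_n(\Gamma\wr\Sigma_{p^k},C_0)/J$ is the ring of $C_0$-valued functions on $\hom(\lat,\Gamma\wr\Sigma_{p^k})^{\trans}_{/\sim} \cong \Sub_{p^k}(\qz,\Gamma)$, and with respect to these identifications the displayed map is the $C_0$-dual of the map of sets
\[
\Coprod{A\subseteq G} \hom(\lat,A\wr\Sigma_{p^k})^{\trans}_{/\sim} \lra{} \hom(\lat,G\wr\Sigma_{p^k})^{\trans}_{/\sim}
\]
induced by the inclusions $A\wr\Sigma_{p^k}\hookrightarrow G\wr\Sigma_{p^k}$; these are compatible with the transfer ideals because the number of summands of $F(-)$ depends only on the image under $\pi$, which is unchanged by $A\hookrightarrow G$. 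Thus the lemma is reduced to the surjectivity of this map of sets, since the $C_0$-dual of a surjection of sets is injective.

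Finally, surjectivity follows directly from \myref{representative} and the explicit section constructed in the proof of \myref{subs}: given $[\al]\in\hom(\lat,G\wr\Sigma_{p^k})^{\trans}_{/\sim}$ with associated subgroup $H\subset\qz$ and reduced class $[\bar{\al}]\in\hom(\lat_H,G)_{/\sim}$, that construction produces a representative all of whose wreath coordinates lie in the subgroup $A := \im(\bar{\al}^c)\subseteq G$, which is abelian because $\lat_H$ is; hence this representative factors through $A\wr\Sigma_{p^k}$ and $[\al]$ lies in the image. The main obstacle is the bookkeeping of the first paragraph, namely checking that $\chi$ matches the two transfer ideals after inverting $p$ so that the quotient map remains injective; once this is in place (using \cite{hkr}, the faithful flatness of $C_0$ over $p^{-1}\E$, and the double coset formula), the remaining steps are formal consequences of Section~\ref{sec:grouptheory}.
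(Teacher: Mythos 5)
Your proposal is correct and follows essentially the same route as the paper: reduce to class functions via faithful flatness of $C_0$ over $p^{-1}\E$, identify the quotient by the transfer ideal with functions on $\Sub_{p^k}(\qz,-)$, and deduce injectivity from the surjection of sets $\Coprod{A\subset G}\Sub_{p^k}(\qz,A)\twoheadrightarrow \Sub_{p^k}(\qz,G)$, which holds because any $\al\colon\lat_H\to G$ factors through its (abelian) image. The paper states this much more tersely; your extra bookkeeping about matching the transfer ideals under $\chi$ is a correct elaboration of the same argument rather than a different one.
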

\begin{proof}
Since $C_0$ is a faithfully flat $p^{-1}\E$-algebra it suffices to check this on class functions. Thus the claim is equivalent to there being a surjection of sets
\[
\Coprod{A \subseteq G} \Sub_{p^k}(\qz,A) \lra{} \Sub_{p^k}(\qz,G).
\]
This is a surjection because any map $\lat_H \lra{} G$ factors through its image, which is an abelian subgroup of $G$.
\end{proof}

\begin{prop}\label{pomodtransfer}
For $G$ a finite group, the diagram
\[
\xymatrix{\E(BG) \ar[r]^-{\Po_{p^k}/J} \ar[d] & \E(BG\wr\Sigma_{p^k})/J \ar[d] \\ Cl_n(G,C_0) \ar[r]^-{\Po_{p^k}^{\phi}/J} & Cl_n(G \wr \Sigma_{p^k},C_0)/J}
\]
commutes.
\end{prop}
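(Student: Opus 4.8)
The plan is to deduce this from the abelian case \myref{pomodtransferab} by a restriction argument, exactly parallel to the standard fact that $\E(BG)$ embeds rationally into $\Prod{A\subseteq G}\E(BA)$. First I would record that all three maps occurring in the square — the topological operation $\Po/J$, the operation $\Po^{\phi}/J$ on class functions, and the character map $\chi$ — descend to the quotients by the transfer ideals and are natural with respect to restriction along subgroup inclusions. For $\Po/J$ this is naturality of the topological total power operation together with the compatibility of the transfer ideals $J$ with restriction; for $\Po^{\phi}/J$ it is the naturality established in Section~\ref{sec:rationaltrafo} together with the analogous statement for $J$ on class functions; for $\chi$ it is naturality of the character map and the fact that $\chi$ intertwines the two notions of transfer (Theorem~D of \cite{hkr}), which in particular gives $\chi(J)\subseteq J$. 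Thus the square in the statement is a square of maps out of $\E(BG)$, natural in $G$ under restriction.

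Next I would invoke \myref{embedding}: tensoring the embedding $p^{-1}\E(BG\wr\Sigma_{p^k})/J \hookrightarrow \Prod{A\subseteq G}p^{-1}\E(BA\wr\Sigma_{p^k})/J$ up along the faithfully flat extension $p^{-1}\E\to C_0$ — equivalently, combining the identification $Cl_n(-\wr\Sigma_{p^k},C_0)/J=\Prod{\Sub_{p^k}(\qz,-)}C_0$ of \myref{transferideal} with the surjection $\coprod_{A\subseteq G}\Sub_{p^k}(\qz,A)\twoheadrightarrow\Sub_{p^k}(\qz,G)$ from the proof of \myref{embedding} — yields an injection
\[
Cl_n(G\wr\Sigma_{p^k},C_0)/J \hookrightarrow \Prod{A\subseteq G}Cl_n(A\wr\Sigma_{p^k},C_0)/J
\]
whose components are the restriction maps to the abelian subgroups $A$ of $G$. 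So it suffices to check that the square commutes after composing with restriction to each abelian $A\subseteq G$.

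Finally, for a fixed abelian $A\subseteq G$, naturality of $\Po/J$, $\Po^{\phi}/J$, and $\chi$ identifies the square of the proposition, post-composed with restriction to $A$, with the square of \myref{pomodtransferab} for $A$, pre-composed with the restriction $\E(BG)\to\E(BA)$. Since that square commutes by \myref{pomodtransferab}, the two composites $\E(BG)\rightrightarrows Cl_n(A\wr\Sigma_{p^k},C_0)/J$ coincide for every abelian $A\subseteq G$, and the injection above then forces the original square to commute.

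Once the abelian case is in hand the argument is essentially formal, so I do not expect a serious obstacle; the only points that genuinely need to be verified rather than asserted are that the transfer ideals $J$ are preserved by restriction along all the relevant subgroup inclusions — so that $\Po/J$ and $\Po^{\phi}/J$ are honestly natural on the quotients, not merely before passing to quotients — and that the embedding of \myref{embedding} really does have the restriction maps as its components. These are the load-bearing compatibilities, and they are where I would spend the writing effort.
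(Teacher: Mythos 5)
Your proposal is correct and follows essentially the same route as the paper: the paper's proof is precisely a cube whose right face is the abelian case \myref{pomodtransferab}, whose top, bottom, front, and back faces encode naturality of $\Po/J$, $\Po^{\phi}/J$, and $\chi$ under restriction to abelian subgroups, and whose horizontal maps are the injections of \myref{embedding}, forcing the left face to commute. The only cosmetic difference is that the paper works with the embedding at the level of $p^{-1}\E$ and notes that the square factors through rationalization since $\Po/J$ is a ring map and $C_0$ is rational, whereas you base-change the embedding to $C_0$; the compatibilities you flag as load-bearing are exactly the ones the paper relies on via \myref{compatibletransferideal}, \myref{transferideal}, and Theorems~C and~D of \cite{hkr}.
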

\begin{proof}
The map from the top arrow to the bottom arrow factors through the rationalization because $\Po_{p^k}/J$ is a ring map and $C_0$ is a rational algebra. It suffices to consider the following cube
\[\resizebox{\columnwidth}{!}{
\xymatrix{& p^{-1}\E(BG) \ar[ld]_{\Po_{p^k}/J} \ar'[d][dd] \ar[rr] & & \Prod{A \subseteq G} p^{-1}\E(BA) \ar[ld]^{\prod \Po_{p^k}/J} \ar[dd] \\
p^{-1}\E(BG \wr \Sigma_{p^k})/J \ar[dd] \ar[rr] & &  \Prod{A\subseteq G}p^{-1}\E(BA\wr\Sigma_{p^k})/J \ar[dd] \\
& Cl_n(G,C_0) \ar[ld]_{\Po_{p^k}^{\phi}/J} \ar'[r][rr] & &  \Prod{A \subseteq G}Cl_n(A,C_0) \ar[ld]^{\prod \Po_{p^k}^{\phi}/J} \\
Cl_n(G\wr \Sigma_{p^k})/J \ar[rr]  &&  \Prod{A\subseteq G}Cl_n(A\wr \Sigma_{p^k})/J.}
}\]
The top and bottom squares commute by naturality, the back and front squares commute by character theory (Theorems C and D in \cite{hkr}), and the right square commutes by \Cref{pomodtransferab}. Now, since the horizontal maps are injections, the left square must commute.
\end{proof}

\begin{prop}\label{injectioncl}
For all $k > 0$ and any finite group $G$, there is an injection
\[
Cl_n(G \wr \Sigma_{p^k},C_0) \hookrightarrow (Cl_n(G \wr \Sigma_{p^k},C_0)/J) \times Cl_n(G \wr \Sigma_{p^{k-1}}^{p},C_0),
\]
where the map to the left factor is the quotient and the map to the right factor is given by restriction.
\end{prop}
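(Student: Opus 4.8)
The plan is to make both factors of the target completely explicit as products of copies of $C_0$ and to reduce the assertion to a combinatorial fact about partitioning $p$-power orbit sizes.

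Recall that $Cl_n(G\wr\Sigma_{p^k},C_0)=\Prod{\Sum_{p^k}(\qz,G)}C_0$, and that by \myref{transferideal} the ideal $J$ is exactly the subproduct indexed by those sums $\oplus_i(H_i,[\al_i])$ having at least two summands; consequently $Cl_n(G\wr\Sigma_{p^k},C_0)/J$ is the complementary subproduct, indexed by $\Sub_{p^k}(\qz,G)$. On the other hand, identifying $\hom(\lat,G\wr\Sigma_{p^{k-1}}^p)_{/\sim}$ with $\Sum_{p^{k-1}}(\qz,G)^{\times p}$ via \myref{sums}, the inclusion $\iota\colon G\wr\Sigma_{p^{k-1}}^p\hookrightarrow G\wr\Sigma_{p^k}$ induces on conjugacy classes the map
\[
\iota_*\colon \Sum_{p^{k-1}}(\qz,G)^{\times p}\to\Sum_{p^k}(\qz,G),\qquad (c_1,\dots,c_p)\mapsto c_1\oplus\dots\oplus c_p ,
\]
as one reads off from the construction of $F$ exactly as in \myref{padicsum}. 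Since restriction of class functions along $\iota$ is precomposition with $\iota_*$, the kernel of the restriction map $\Res$ is the subproduct of $\Prod{\Sum_{p^k}(\qz,G)}C_0$ indexed by the complement of $\im(\iota_*)$.

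The crux is then the identity $\im(\iota_*)=\{\,c\in\Sum_{p^k}(\qz,G) : c \text{ has at least two summands}\,\}$. The inclusion ``$\subseteq$'' is clear, since each $c_j$ is a nonempty sum. For ``$\supseteq$'', given $\oplus_{i\in S}(H_i,[\al_i])$ with $|S|\ge 2$, the orders $|H_i|$ are powers of $p$, at least two of them, summing to $p^k$, so it suffices to partition $S$ into $p$ (automatically nonempty) blocks on each of which the $|H_i|$ sum to $p^{k-1}$; such a partition displays the given element as $\iota_*(c_1,\dots,c_p)$. Granting this, $\ker(\Res)$ is precisely the subproduct indexed by $\Sub_{p^k}(\qz,G)$, which is complementary to $J$ inside $\Prod{\Sum_{p^k}(\qz,G)}C_0$; hence $J\cap\ker(\Res)=0$, and since the map in the statement has kernel $J\cap\ker(\Res)$ it is injective.

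It therefore remains to establish the combinatorial claim: every multiset of powers of $p$ with at least two elements and total sum $p^k$ admits a partition into $p$ sub-multisets each of sum $p^{k-1}$. I would prove this by induction on $k$. For $k=1$ such a multiset must consist of $p$ copies of $1$, and we take singletons. For $k\ge 2$, the elements divisible by $p$ contribute $0$ modulo $p$, so the number of $1$'s is a multiple of $p$, say $cp$; replacing those $1$'s by $c$ copies of $p$ produces a multiset $S^\flat$ of powers of $p$, all at least $p$, of the same sum $p^k$, and one checks (using $k\ge 2$) that $S^\flat$ still has at least two elements. Dividing every element of $S^\flat$ by $p$ yields a multiset of powers of $p$ with at least two elements and sum $p^{k-1}$; by the inductive hypothesis it splits into $p$ blocks of sum $p^{k-2}$, and scaling back up by $p$, then re-expanding each introduced $p$ into $p$ ones inside its block, gives the required partition. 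This lemma is the only substantial point of the argument; the single delicate step is checking that $S^\flat$ retains at least two elements, which is exactly where the hypothesis that the multiset has at least two elements — equivalently, that the class $c$ has no summand of order $p^k$ — is used.
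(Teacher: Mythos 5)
Your proof is correct and follows essentially the same route as the paper: both reduce to conjugacy classes and to the surjectivity of $\Prod{p}\Sum_{p^{k-1}}(\qz,G)\twoheadrightarrow \Sum_{p^k}(\qz,G)\setminus\Sub_{p^k}(\qz,G)$. The only difference is that the paper dismisses this surjectivity as clear, whereas you supply the (correct) inductive proof of the underlying combinatorial fact about partitioning $p$-power orders into $p$ blocks of sum $p^{k-1}$.
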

\begin{proof}
It is just a matter of checking this on conjugacy classes. Every element of $\Sub_{p^k}(\qz,G) \allowbreak \subseteq \Sum_{p^k}(\qz,G)$ is hit by the left factor. By \cref{sums}, the map to the other factor is induced by the map 
\[
\prod_{l=1}^{p}\Sum_{p^{k-1}}(\qz,G) \twoheadrightarrow \Sum_{p^k}(\qz,G) \setminus \Sub_{p^k}(\qz,G)
\]
defined by
\[
\prod_{l = 1}^{p} \, \bigoplus_{i = 1}^{j_l} (H_{i,l}, [\al_{i,l}]) \mapsto \bigoplus_{l=1}^p\bigoplus_{i = 1}^{j_l}(H_{i,l}, [\al_{i,l}])
\]
which is clearly surjective. Note that it is not an isomorphism as it sends an ordered collection of sums to their (unordered) sum.
\end{proof}

The following proposition is the base case of an induction on $k$ in the proof of \cref{clmainthm}.

\begin{prop}\label{inductionstart}
There is a commutative diagram
\[
\xymatrix{\E(BG) \ar[r]^-{\Po_p} \ar[d]_{\chi} & \E(BG \wr \Sigma_{p}) \ar[d]^{\chi} \\ Cl_n(G,C_0) \ar[r]^-{\Po_{p}^{\phi}} & Cl_n(G\wr \Sigma_{p},C_0).}
\]
\end{prop}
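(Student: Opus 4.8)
The plan is to bootstrap from \myref{pomodtransfer} using the description of class functions on $G\wr\Sigma_p$ supplied by \myref{injectioncl}. Specializing that proposition to $k=1$ (so that $\Sigma_{p^{k-1}}^p=\Sigma_1^p$ is trivial and $BG\wr\Sigma_1^p\simeq BG^p$) we have an injection
\[
Cl_n(G\wr\Sigma_p,C_0)\hookrightarrow\bigl(Cl_n(G\wr\Sigma_p,C_0)/J\bigr)\times Cl_n(G^p,C_0),
\]
whose first component is the quotient by $J$ and whose second component is restriction along the standard inclusion $\iota\colon G^p\hookrightarrow G\wr\Sigma_p$. It therefore suffices to show that the two composites $\chi\circ\Po$ and $\Po^{\phi}\circ\chi$ from $\E(BG)$ to $Cl_n(G\wr\Sigma_p,C_0)$ agree after post-composing with each of these two components, and I would treat the two cases separately.

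For the first component, recall that by Theorem~D of \cite{hkr} the character map carries the transfer ideal $J\subset\E(BG\wr\Sigma_p)$ into the transfer ideal $J\subset Cl_n(G\wr\Sigma_p,C_0)$, so it descends to a map $\bar{\chi}$ which is exactly the right-hand vertical arrow of \myref{pomodtransfer}. Hence composing $\chi\circ\Po$ with the quotient equals $\bar{\chi}\circ(\Po/J)$, and composing $\Po^{\phi}\circ\chi$ with the quotient equals $(\Po^{\phi}/J)\circ\chi$; these coincide by \myref{pomodtransfer} (which, for general finite $G$, is where our extension \myref{abeliantotalpowerop} of Ando--Hopkins--Strickland is used). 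For the second component I would invoke the standard power-operation relations. On the topological side, \myref{Grewrite} with $k=1$ (equivalently the relations of \myref{porelations}) identifies $\iota^*\Po_p(x)$ with the $p$-fold external product $\Po_1(x)\times\cdots\times\Po_1(x)=x\times\cdots\times x$, since $\Po_1$ is the identity on $E$-theory; applying $\chi$ and using that it is natural and multiplicative for external products gives $\iota^*\chi(\Po_p(x))=\chi(x)\times\cdots\times\chi(x)$. On the class-function side, the corresponding relation for $\Po^{\phi}$ from Section~\ref{sec:rationalpo} identifies $\iota^*\Po_p^{\phi}(f)$ with $\Po_1^{\phi}(f)\times\cdots\times\Po_1^{\phi}(f)$, where $\Po_1^{\phi}(f)=f^{\phi_e}$ by the defining formula for $\Po^{\phi}$. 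It then remains to observe that $\chi(x)^{\phi_e}=\chi(x)$: by Theorem~C of \cite{hkr} the image of $\chi$ lies in $Cl_n(G,C_0)^{\Aut(\qz)}$, and (as noted after \myref{isogtrafo}) the operation $f\mapsto f^{\phi_e}$ agrees on $\Aut(\qz)$ with the Hopkins--Kuhn--Ravenel action, which fixes $\chi(x)$. Thus both second components equal $\chi(x)\times\cdots\times\chi(x)$, and the injectivity in \myref{injectioncl} completes the argument.

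I do not expect a serious obstacle here: all of the substantive work lies in \myref{abeliantotalpowerop} and \myref{pomodtransfer}, and the present statement is essentially the assembly step. The one point that genuinely needs care is that $\Po_1^{\phi}$ is not the identity (it depends on the chosen automorphism $\phi_e$), so the comparison on the $G^p$-factor is not formal; it rests on the fact that the image of the character map is pointwise fixed by $\Aut(\qz)$, which is precisely what forces $\Po_1^{\phi}\circ\chi=\chi$.
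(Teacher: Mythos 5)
Your argument is correct and follows essentially the same route as the paper: inject $Cl_n(G\wr\Sigma_p,C_0)$ into the product of the mod-$J$ quotient and the $G^p$-restriction via \myref{injectioncl}, handle the first factor with \myref{pomodtransfer}, and handle the $G^p$-factor using that the image of $\chi$ is $\Aut(\qz)$-invariant so that $f\mapsto f^{\phi_e}$ acts trivially on it. You also correctly flag the one non-formal point (that $\Po_1^{\phi}$ need not be the identity), which is exactly the subtlety the paper addresses.
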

\begin{proof}
Consider the following diagram
\begin{equation} \label{diagram}
\xymatrix{\E(BG) \ar[r]^-{\Po_{p}} \ar[d] & \E(BG \wr \Sigma_{p}) \ar[d] \ar[r] & \E(BG^{p}) \times (\E(BG \wr \Sigma_p)/J) \ar[d] \\ Cl_n(G,C_0) \ar[r]^-{\Po_{p}^{\phi}} & Cl_n(G\wr \Sigma_{p},C_0) \ar[r] & Cl_n(G^{p},C_0) \times (Cl_n(G \wr \Sigma_p,C_0)/J).}
\end{equation}
The right vertical arrow is just the product of character maps. First we show that the outer rectangle 
\begin{equation}\label{eq:square9.6}
\xymatrix{\E(BG) \ar[r] \ar[d] & \E(BG^{p}) \times (\E(BG \wr \Sigma_p)/J) \ar[d] \\ Cl_n(G,C_0) \ar[r] & Cl_n(G^{p},C_0) \times (Cl_n(G \wr \Sigma_p,C_0)/J)}
\end{equation}
commutes. It commutes for the right factor by \Cref{pomodtransfer}. 

The commutativity of the left factor is proven as follows. It is a result of \cite{hkr} that the image of $\E(BG)$ is in the $\Aut(\qz) \cong GL_n(\Z_p)$ invariants of $Cl_n(G,C_0)$, where the $\Aut(\qz)$-action is the action induced by \Cref{action}. Recall that for $f \in Cl_n(G, C_0)$, the action takes the form
\[
f^{\phi_e}([\al]) = \phi_{e}^* f([\al \psi_{e}^*]),
\]
where $\psi_{e} = \phi_{e}$ since it factors the identity map:
\[
\xymatrix{\qz \ar[dr]_{\id = q_e} \ar[r]^{\phi_e} & \qz  \\ & \qz/e = \qz. \ar[u]_{\psi_{e}}}
\]
Next note that the inclusion $G^p \hookrightarrow G \wr \Sigma_p$ induces the map on conjugacy classes sending
\[
([\al_i])_{i = 1\ldots p} \mapsto \Oplus{i = 1\ldots p} (e,[\al_i]).
\]
Thus the composite of the bottom arrows sends
\[
f \mapsto \otimes_i f^{\phi_e}([\al_i]).
\]
For $x \in \E(BG)$, $\chi(x)$ is fixed by this action, so $\chi(x)$ maps to $\otimes_i \chi(x) = \chi(\otimes_i x)$. Now by \cref{Grewrite} for $k=1$, the square in \eqref{eq:square9.6} commutes.

Finally, the right square in \eqref{diagram} commutes by naturality of the character map. Since the bottom right arrow is an injection by \Cref{injectioncl}, %it is a categorical fact that 
the left square must commute as well.
%\[
%\xymatrix{x \ar[r] \ar[d] & x^{times p} \ar[d] \\ \chi(x) \ar[r] & 
%\]
\end{proof}

%Note that we cannot use an analogue of \Cref{special} because ``bad" groups keep the proof from going through, but that doesn't end up getting in the way. The same argument finishes things off by induction.
Now we finish the induction.

\begin{prop} \label{tpofinite}
The following diagram commmutes:
\[
\xymatrix{\E(BG) \ar[r]^-{\Po_{p^k}} \ar[d]_{\chi} & \E(BG \wr \Sigma_{p^k}) \ar[d]^{\chi} \\ Cl_n(G,C_0) \ar[r]^-{\Po^{\phi}_{p^k}} & Cl_n(G\wr \Sigma_{p^k},C_0).}
\]
\end{prop}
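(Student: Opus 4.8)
The plan is to induct on $k$, with the base case $k=1$ already established in \myref{inductionstart}. For the inductive step I would use \myref{injectioncl}, which realises $Cl_n(G\wr\Sigma_{p^k},C_0)$ as a subring of $\bigl(Cl_n(G\wr\Sigma_{p^k},C_0)/J\bigr)\times Cl_n(G\wr\Sigma_{p^{k-1}}^p,C_0)$ via the quotient map and the restriction along $G\wr\Sigma_{p^{k-1}}^p\hookrightarrow G\wr\Sigma_{p^k}$. So it suffices to check that the two maps $\chi\circ\Po_{p^k}$ and $\Po^{\phi}_{p^k}\circ\chi$ out of $\E(BG)$ agree after postcomposing with each of these two projections. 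Since $\chi$ is natural and is compatible with the transfer ideals (Theorem~D of \cite{hkr}), in both cases the two composites fit into an evident enlargement of the square, and the content is the commutativity of that enlarged square. For the quotient projection this is exactly \myref{pomodtransfer}.

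The substantial case is the restriction projection, where I would invoke \myref{Grewrite}. Since $\Sigma_{p^{k-1}}^p$ sits inside $\Sigma_{p^k}$ as the base of $\Sigma_{p^{k-1}}\wr\Sigma_p\subset\Sigma_{p^k}$, \myref{Grewrite} identifies the composite $\E(BG)\xrightarrow{\Po_{p^k}}\E(BG\wr\Sigma_{p^k})\to\E(BG\wr\Sigma_{p^{k-1}}^p)$ with $x\mapsto\Po_{p^{k-1}}(x)\times\cdots\times\Po_{p^{k-1}}(x)$, i.e.\ with $\Po_{p^{k-1}}$ followed by the $p$-fold external product. On the other hand, the inclusion $\Sigma_{p^{k-1}}^p\hookrightarrow\Sigma_{p^k}$ induces on the associated sets of subgroups the concatenation map $\Sum_{p^{k-1}}(\qz,G)^{\times p}\to\Sum_{p^k}(\qz,G)$ sending a tuple of sums to their direct sum (a special case of the discussion around \myref{padicsum}); feeding this into the defining formula for $\Po^{\phi}$, which is multiplicative across direct summands, shows that the composite $Cl_n(G,C_0)\xrightarrow{\Po^{\phi}_{p^k}}Cl_n(G\wr\Sigma_{p^k},C_0)\to Cl_n(G\wr\Sigma_{p^{k-1}}^p,C_0)$ is $f\mapsto\Po^{\phi}_{p^{k-1}}(f)\times\cdots\times\Po^{\phi}_{p^{k-1}}(f)$ under the canonical isomorphism $Cl_n(H^p,C_0)\cong Cl_n(H,C_0)^{\otimes_{C_0}p}$. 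Because $\chi$ is a natural ring map it intertwines the $p$-fold external product maps on the two sides, so combining this with the inductive hypothesis $\chi\circ\Po_{p^{k-1}}=\Po^{\phi}_{p^{k-1}}\circ\chi$ gives the desired commutativity for the restriction projection, and \myref{injectioncl} then closes the induction.

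The main obstacle I anticipate is exactly the matching in the second paragraph: one must identify the restriction of $\Po^{\phi}_{p^k}$ along $G\wr\Sigma_{p^{k-1}}^p\hookrightarrow G\wr\Sigma_{p^k}$ using the combinatorics of Section~\ref{sec:grouptheory}, and then confirm that the external-product structure appearing there is literally the one in \myref{Grewrite}, so that $\chi$ really transports one composite to the other. Everything else reduces to formal diagram chasing once \myref{inductionstart}, \myref{pomodtransfer}, and \myref{injectioncl} are in hand.
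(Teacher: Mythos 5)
Your proposal is correct and follows essentially the same route as the paper's own proof: induction on $k$ from \myref{inductionstart}, the injection of \myref{injectioncl}, \myref{pomodtransfer} for the quotient-by-$J$ factor, and \myref{Grewrite} together with the inductive hypothesis for the restriction to $G\wr\Sigma_{p^{k-1}}^p$. The identification of the restricted $\Po^{\phi}_{p^k}$ with the $p$-fold external product of $\Po^{\phi}_{p^{k-1}}$, which you flag as the main point to check, is exactly the content the paper relies on, and your verification of it via the combinatorics of Section~\ref{sec:grouptheory} is sound.
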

\begin{proof}
Consider the following diagram
\[\resizebox{\columnwidth}{!}{
\xymatrix{\E(BG) \ar[r]^-{\Po_{p^k}} \ar[d] & \E(BG \wr \Sigma_{p^k}) \ar[d] \ar[r] & \E(BG\wr \Sigma_{p^{k-1}}^p) \times (\E(BG)\otimes_{\E} \E(\Sigma_{p^k})/I_{}) \ar[d] \\ Cl_n(G,C_0) \ar[r]^-{\Po^{\phi}_{p^k}} & Cl_n(G\wr \Sigma_{p^k},C_0) \ar[r] & Cl_n(G \wr \Sigma_{p^{k-1}}^p,C_0) \times (Cl_n(G,C_0) \otimes_{C_0} Cl_n(\Sigma_{p^k},C_0)/I_{}).}
}\]
The outer rectangle commutes because we understand the maps to each factor in the right lower corner. The map to the left factor is determined by induction and \Cref{Grewrite}. The map to the right factor commutes by \Cref{pomodtransfer}. The bottom right map is an injection by \Cref{injectioncl}. Thus the left square must commute.
\end{proof}

Finally, we finish the proof of \Cref{clmainthm}.

\begin{proof}[Proof of \Cref{clmainthm}]
Let $\sum_j a_j p^j$ be the $p$-adic expansion of $m$. Then the inclusion of groups
\[
\Prod{j} \Sigma_{p^j}^{a_j} \lra{} \Sigma_m
\]
induces a commutative square
\[
\xymatrix{\E(BG\wr \Sigma_m) \ar[r] \ar[d] & \E(\Prod{j}BG \wr \Sigma_{p^j}^{a_j}) \ar[d] \\ Cl_n(G \wr \Sigma_m,C_0) \ar@{^{(}->}[r] & Cl_n(\Prod{j} G\wr \Sigma_{p^j}^{a_j},C_0),} 
\]
in which the bottom arrow is an injection by \Cref{padicsum}. By \Cref{porelations} part (3), the composite
\[
\E(BG) \lra{\Po_m} \E(BG \wr \Sigma_m) \lra{} \E(\Prod{j}BG \wr \Sigma_{p^j}^{a_j})
\]
is the external tensor product $\otimes_j \Po_{p^j}^{\otimes a_j}$. Consider the following diagram:
\[
\xymatrix{\E(BG) \ar[r] \ar[d] & \E(BG\wr \Sigma_m) \ar[r] \ar[d] & \E(\Prod{j}BG \wr \Sigma_{p^j}^{a_j}) \ar[d] \\Cl_n(G,C_0) \ar[r] & Cl_n(G \wr \Sigma_m,C_0) \ar@{^{(}->}[r] & Cl_n(\Prod{j} G\wr \Sigma_{p^j}^{a_j},C_0).}
\]
By \Cref{tpofinite}, we know that the outer rectangle commutes, and the right square commutes by naturality of the character map. This implies that the left square commutes.
\end{proof}

\begin{example}
In Proposition 3.6.1 of \cite{isogenies}, Ando constructs Adams operations for Morava $E$-theory. Reformulating his construction in terms of the power operation for $\E$ shows that the Adams operations are the composite
\[
\psi^{p^k}\colon \E(BG) \lra{P_{p^{kn}}/I} \E(BG) \otimes_{\E} \E(B\Sigma_{p^{kn}})/I \lra{} \E(BG),
\]
where the last map is induced by the map
\[
\E(B\Sigma_{p^{kn}})/I \lra{} \E,
\]
picking out the subgroup $\G[p^k] \subset \G$. \Cref{adamsops} computes the same composite on class functions. As a special case of \Cref{clmainthm}, we have a commutative diagram
\[
\xymatrix{\E(BG) \ar[r]^-{\psi^{p^k}} \ar[d]_-{\chi} & \E(BG) \ar[d]^-{\chi} \\ Cl_n(G,C_0) \ar[r]^{\psi^{p^k}} & Cl_n(G,C_0).}
\]
When $\kappa \subset \F_{p^n}$ and $\G$ is the universal deformation of the Honda formal group, this gives a generalization of the well-known formula from representation theory stating that, for a representation $\rho$,
\[
\chi(\psi^m(\rho))(g) = \chi(\rho)(g^m).
\]
\end{example}

%\section{The effect of precomposition with the inertia groupoid}
%Note that you can always base change K-theory to any C_0 and get the right thing do to the map Q_p adjoin roots of unity to C_0.
\section{The rational total power operation}\label{sec:maintheorem}
Recall that there is an action of $\Aut(\qz)$ on $Cl_n(G, C_0)$. It is Theorem C of \cite{hkr} that there is a canonical isomorphism
\[
p^{-1} \E(BG) \cong Cl_n(G, C_0)^{\Aut(\qz)}.
\]
In this section we prove that, for any section $\phi$, $\Po^{\phi}_{m}$ sends $\Aut(\qz)$-invariants to $\Aut(\qz)$-invariants and that the restriction of $\Po^{\phi}_{m}$ to the $\Aut(\qz)$-invariants is independent of the choice of $\phi$. The resulting ``rational total power operation" is a global power functor. 

\begin{thm} \label{rationalized}
For all finite groups $G$ and any section $\phi \in \Gamma(\Sub(\qz), \Isog(\qz))$, the function
\[
\Po^{\phi}_{m} \colon Cl_n(G,C_0) \lra{} Cl_n(G\wr \Sigma_m,C_0)
\]
sends $\Aut(\qz)$-invariants to $\Aut(\qz)$-invariants. By restricting $\Po_{m}^{\phi}$ to the $\Aut(\qz)$-invariants, this gives rise to a commutative diagram
\[\xymatrix{\E(BG) \ar[r]^-{\mathbb{P}_{m}} \ar[d] & \E(BG \wr \Sigma_{m}) \ar[d] \\
p^{-1}\E(BG) \ar[r]_-{\mathbb{P}^{\phi}_{m}} & p^{-1}\E(BG \wr \Sigma_{m}).}\]
\end{thm}
\begin{proof}
By Theorem C of \cite{hkr}  and \Cref{clmainthm}, it is thus enough to show that the multiplicative natural transformation 
\[\Po^{\phi}_{m}\colon Cl_n(G,C_0) \lra{} Cl_n(G\wr \Sigma_{m},C_0)\] 
restricts to the $\Aut(\qz)$-fixed points of both sides. To this end, recall from \Cref{sumsaction} that the action of $\sigma \in \Aut(\qz)$ on $\bigoplus_i(H_i,[\alpha_i]) \in \Sum_m(\qz,G)$ is given by
\begin{equation}\label{eq:glaction}
\sigma\colon \bigoplus_i(H_i,[\alpha_i]) \mapsto \bigoplus_i(\sigma H_i,[\alpha_i \sigma^{*}_{|_{\lat_{H_i}}}]),
\end{equation}
with notation as in the following commutative diagram:
\[
\xymatrix{\lat_{\sigma H_i} \ar[r] \ar[d]_{\sigma^*_{|_{\lat_{\sigma H_i}}}} & \lat \ar[d]^{\sigma^*} \\
\lat_{H_i} \ar[r] & \lat.}
\]

\begin{comment}
\[\xymatrix{\lat_{\sigma H_i} \ar[r] \ar[d]_{\sigma^*_{|_{\lat_{\sigma H_i}}}} & \lat \ar[d]^{\sigma^*} \\
\lat_{H_i} \ar[r] \ar[d]_{\al_i} & \lat \ar[d] \ar[rd] \\
G \ar[r] & G \wr \Sigma_{h_i} \ar[r] & \Sigma_{h_i}.}\]
\end{comment}
Fix an automorphism $\sigma \in \Aut(\qz)$ and an invariant element $f \in Cl_n(G,C_0)^{\Aut(\qz)}$. Since $\phi_{H_i}$ and $\phi_{\sigma H_i}\sigma$ have the same kernel, there exists a unique isomorphism $\gamma_i \in \Aut(\qz)$ making the diagram
\[\xymatrix{\qz \ar[r]^{\sigma} \ar[d]_{\phi_{H_i}} & \qz \ar[d]^{\phi_{\sigma H_i}} \\
\qz \ar[r]_{\gamma_i}^{\cong} & \qz}\]
commute. Upon dualizing and using the identities $\phi_{H_i} = \psi_{H_i} \circ q_{H_i}$ and $\phi_{\sigma H_i} = \psi_{\sigma H_i} \circ q_{\sigma H_i}$, we see that the inner squares and triangles in the next diagram commute:
\[\xymatrix{\lat_{H_i} \ar[rd]^{q_{H_{i}}^*} & & & \lat_{\sigma H_{i}} \ar[lll]_{\sigma^*_{|_{\lat_{\sigma H_i}}}} \ar[ld]_-{q^*_{\sigma H_{i}}} \\
& \lat & \lat \ar[l]_-{\sigma^*} \\
& \lat \ar[u]_{\phi_{H_{i}}^*} \ar[luu]^{\psi_{H_{i}}^*} & \lat \ar[l]^{\gamma^*} \ar[u]^{\phi_{\sigma H_{i}}^*} \ar[ruu]_{\psi_{\sigma H_{i}}^*}.}\]
Therefore, the outer diagram commutes as well and hence gives 
\begin{equation}\label{eq:gammatrafo}
\sigma^{*}_{|_{\lat_{\sigma H_i}}} \psi_{\sigma H_i}^* = \psi_{H_i}^*\gamma^*.
\end{equation}
Now we can check that $\Po^{\phi}_{m}(f)$ is invariant under the action of $\Aut(\qz)$:
\begin{align*}
\mathbb{P}^{\phi}_{m}(f)^{\sigma}(\bigoplus_i(H_i,[\alpha_i])) & = \sigma^*\Prod{i}\mathbb{P}^{\phi}_{m}(f)((\sigma H_i,[\alpha_i \sigma^*_{|_{\lat_{\sigma H_i}}}])) & & \text{by \eqref{eq:glaction}}  \\
& = \Prod{i}\sigma^*\phi_{\sigma H_i}^*f([\alpha_i\sigma^*_{|_{\lat_{\sigma H_i}}} \psi_{\sigma H_i}^*]) \\
& = \Prod{i}\phi_{H_i}^*\gamma_{i}^*f([\alpha_i\psi_{H_i}^*\gamma_{i}^*]) & & \text{by \eqref{eq:gammatrafo}} \\
& = \Prod{i}\phi_{H_i}^*f([\alpha_i\psi_{H_i}^*]) & & \text{as $f \in Cl_n(G,C_0)^{\Aut(\qz)}$}\\
& = \Prod{i}f^{\phi_{H_i}}([\alpha_i]) \\
& = \mathbb{P}^{\phi}_{m}(f)(\bigoplus_i(H_i,[\alpha_i])).
\end{align*}
\end{proof}

\begin{cor} \label{independent}
The restriction of $\Po^{\phi}_{m}$ to the $\Aut(\qz)$-invariants
\[\Po^{\phi}_{m}\colon p^{-1}\E(BG) \lra{} p^{-1}\E(BG \wr \Sigma_{m})\]
is independent of the chosen section $\phi$.
\end{cor}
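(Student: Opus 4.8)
The plan is to reduce the statement to a pointwise comparison of class functions, using that any two sections of $\Isog(\qz)\twoheadrightarrow\Sub(\qz)$ differ by an $\Aut(\qz)$-valued function on $\Sub(\qz)$, and that elements of $p^{-1}\E(BG)=Cl_n(G,C_0)^{\Aut(\qz)}$ are by construction insensitive to such a twist.

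First I would fix two sections $\phi,\phi'\in\Gamma(\Sub(\qz),\Isog(\qz))$. Since the bundle $\Isog(\qz)\twoheadrightarrow\Sub(\qz)$ is $\Aut(\qz)$-principal, for each finite subgroup $H\subseteq\qz$ there is a unique $\gamma_H\in\Aut(\qz)$ with $\phi'_H=\gamma_H\phi_H$; feeding this into the induced triangles of Section~\ref{sec:notation} and using that $q_H$ is an epimorphism forces $\psi'_H=\gamma_H\psi_H$ as well. By \myref{rationalized}, both $\Po^{\phi}$ and $\Po^{\phi'}$ restrict, along the isomorphisms $Cl_n(-,C_0)^{\Aut(\qz)}\cong p^{-1}\E(B-)$ of \cite{hkr}, to maps on rationalized $E$-theory, so it suffices to show that $\Po^{\phi}(f)=\Po^{\phi'}(f)$ for every invariant $f\in Cl_n(G,C_0)^{\Aut(\qz)}$.

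Both sides are class functions, so I would evaluate them on a class $\oplus_i(H_i,[\al_i])\in\Sum_{m}(\qz,G)$. Expanding the defining formula for $\Po^{\phi'}$ and substituting $\phi'_{H_i}=\gamma_{H_i}\phi_{H_i}$ and $\psi'_{H_i}=\gamma_{H_i}\psi_{H_i}$, the $i$-th factor becomes, after dualizing and keeping track of the order reversal under Pontryagin duality and under the right $\Isog(\qz)$-action on $C_0$ of \myref{rmk:hkraction},
\[
\phi_{H_i}^{*}\bigl(\gamma_{H_i}^{*}f([\al_i\psi_{H_i}^{*}\gamma_{H_i}^{*}])\bigr).
\]
The $\Aut(\qz)$-invariance of $f$, written via the description of the $\Aut(\qz)$-action on class functions recalled after \myref{isogtrafo}, gives $\gamma_{H_i}^{*}f([\al_i\psi_{H_i}^{*}\gamma_{H_i}^{*}])=f([\al_i\psi_{H_i}^{*}])$, so the $i$-th factor collapses to $\phi_{H_i}^{*}f([\al_i\psi_{H_i}^{*}])=f^{\phi_{H_i}}([\al_i])$, and taking the product over $i$ yields exactly $\Po^{\phi}(f)(\oplus_i(H_i,[\al_i]))$.

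The only genuine obstacle is keeping the bookkeeping straight — which side the twists $\gamma_H$ land on, and the repeated order reversals coming from Pontryagin duality and from the left/right distinction in \myref{rmk:hkraction}. This is, however, formally the same computation already executed at the end of the proof of \myref{rationalized}: the single automorphism $\sigma$ there is simply replaced by the $H$-dependent family $\gamma_H$, so I would model the display above on the one appearing in that proof and reuse the compatibility identity relating the $\gamma_H$ with the maps $\psi_H^{*}$ that is established there.
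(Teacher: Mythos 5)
Your proposal is correct and coincides with the paper's second proof of \myref{independent}: you compare two sections via the unique twisting automorphisms $\gamma_H\in\Aut(\qz)$, deduce $\psi'_H=\gamma_H\psi_H$ from the factorization through $q_H$, and absorb the twist using the $\Aut(\qz)$-invariance of $f$, exactly as in the paper's displayed computation (which likewise models itself on the proof of \myref{rationalized} with $\sigma=\id$). The paper also records a first, $E$-theoretic proof via reduction to abelian groups and freeness of $\E^0(BA)$, but your argument matches the intrinsic one.
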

\begin{proof}[First proof]
As in the proof of \Cref{clmainthm}, we can reduce to the case $m=p^k$. By naturality and \Cref{embedding}, it furthermore suffices to prove this for abelian groups, since $p^{-1}\E(BG)$ embeds into the product of the rationalized $E$-cohomology of the abelian subgroups of $G$. We have two maps
\[
p^{-1}\E(BA) \lra{} p^{-1}\E(BA \wr \Sigma_{p^k})/J;
\]
the first is the rationalization $\Q\otimes \Po_{p^k}/J$ of $\Po_{p^k}/J$ and the second is $\Po^{\phi}_{p^k}/J$ restricted to the $\Aut(\qz)$-fixed points. Because $\E(BA)$ is a finitely generated free $\E$-module, we may choose a basis of $\E(BA)$ which thus gives a basis for $p^{-1}\E(BA)$. By \Cref{rationalized} both maps send the basis elements to the same elements of the codomain, thus the maps are the same. 

We get the full result by induction. We use the embedding 
\[
\xymatrix{p^{-1}\E(BA \wr \Sigma_{p^k}) \ar@{^{(}->}[r] & p^{-1}\E(BA \wr \Sigma_{p^k})/J \times p^{-1}\E(BA \wr \Sigma_{p^{k-1}}^{p})} 
\]  
and induct on $k$. The base case is clear and the induction follows by considering the large diagram and right diagram just as in \Cref{tpofinite}.
\end{proof}

\begin{proof}[Second proof]
We give a second proof of the corollary which is intrinsic to the construction of $\Po^{\phi}_{m}$ and in particular does not rely on properties of Morava $E$-theory. To this end, consider two sections $\phi,\phi' \in \Gamma(\Sub(\qz), \Isog(\qz))$ with associated isomorphisms $\psi_H,\psi'_H\colon \qz/H \lra{\cong} \qz$ for $H \subset \qz$ as in \Cref{sec:notation}. For a fixed $\bigoplus_i(H_i,[\alpha_i]) \in \Sum_m(\qz,G)$, take $\gamma_{H_i} \in \Aut(\qz)$ to be the unique automorphism making the following diagram commute
\begin{equation}\label{eq:gammaauto}
\xymatrix{\qz \ar[r]^{=} \ar[d]_{\phi_{H_i}'} & \qz \ar[d]^{\phi_{H_i}} \\
\qz \ar[r]_{\gamma_{H_i}}^{\cong} & \qz.}
\end{equation}
As in the proof of \Cref{rationalized} with $\sigma = \id$, we see that, for any $f \in Cl_n(G,C_0)^{\Aut(\qz)}$, 
\begin{align*}
\mathbb{P}^{\phi}_{m}(f)(\bigoplus_i(H_i,[\alpha_i])) & = \Prod{i}\phi_{H_i}^*f([\alpha_i\psi_{H_i}^*]) \\
& = \Prod{i}(\phi_{H_i}')^*\gamma_{H_i}^*f([\alpha_i(\psi_{H_i}')^*\gamma_{H_i}^*]) & & \text{by \eqref{eq:gammaauto}} \\
& = \Prod{i}(\phi_{H_i}')^*f([\alpha_i(\psi_{H_i}')^*]) & & \text{as $f \in Cl_n(G,C_0)^{\Aut(\qz)}$}  \\
& = \mathbb{P}^{\phi'}_{m}(f)(\bigoplus_i(H_i,[\alpha_i])),
\end{align*}
hence $\mathbb{P}^{\phi}_{m} = \mathbb{P}^{\phi'}_{m}$ on $p^{-1}\E(BG) \cong Cl_n(G,C_0)^{\Aut(\qz)}$.
\end{proof}
It follows that $\Q \otimes \Po_{p^k}/J = \Po^{\phi}_{p^k}/J$ after restricting $\Po^{\phi}_{m}/J$ to $p^{-1}\E(BG)$. Therefore, the following definition makes sense.

\begin{definition}
For any section $\phi$, let 
\[
\xymatrix{\Po^{\Q}_m\colon p^{-1}\E(BG) \ar[r] & p^{-1}\E(BG \wr \Sigma_m)}
\]
be the restriction of $\Po_{m}^{\phi}$ to $p^{-1}\E(BG)$. We will call this the rational total power operation.
\end{definition}

\begin{thm}\label{ratpoglobalpower}
The rational total power operation $\Po^{\Q}_m$ is a global power functor.
\end{thm}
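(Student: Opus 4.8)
The plan is to verify the axioms of a global power functor from Section~4 of \cite{globalgreen} for $p^{-1}\Po$ on $p^{-1}\E(B-) \cong Cl_n(-,C_0)^{\Aut(\qz)}$. By \myref{independent} we may model $p^{-1}\Po_m$ by the restriction of $\Po^{\phi}_m$ to the $\Aut(\qz)$-fixed subfunctor for whichever section $\phi$ is convenient, and the underlying object $Cl_n(-,C_0)^{\Aut(\qz)}$ is a global Green functor by Theorem~D of \cite{hkr}, being the $\Aut(\qz)$-fixed subfunctor of the global Green functor $Cl_n(-,C_0)$. So only the power-operation axioms remain.

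First I would dispose of the routine relations. The identities satisfied by $\Po^{\phi}$ recorded in Section~\ref{sec:rationalpo} --- the analogues of Proposition~\upperRomannumeral{8}.1.1 of \cite{bmms} governing the behaviour under $\Delta^*$, $\Delta_{i,j}^*$ and $\delta^*$, together with the additive deviation formula involving the transfers $\Tr_{j,m-j}$ --- hold for $\Po^{\phi}$ on all of $Cl_n(-,C_0)$ and for every section $\phi$. Each map occurring in them is $\Aut(\qz)$-equivariant and $\Po^{\phi}$ carries fixed points to fixed points by \myref{rationalized}, so these relations restrict verbatim to $p^{-1}\Po$. The only axiom with genuine content is therefore the iterated-wreath relation $\nabla^*\Po_{ij} = \Po_i\Po_j$ of \myref{porelations}(1) --- exactly the identity which, for $\Po^{\phi}$ on all of $Cl_n(-,C_0)$, is equivalent by \myref{powersecthm} to $\phi$ being a power section, and for which power sections are only known to exist when $n \le 2$.

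To obtain this relation after restricting to fixed points, for all $n$, I would transport it from $E$-theory. By \myref{rationalized}, $p^{-1}\Po_m$ is compatible with $\Po_m$ through the localization map $\iota\colon \E(B-) \lra{} p^{-1}\E(B-)$, and $\iota$ is also natural with respect to $\nabla^*$; combined with \myref{porelations}(1) this gives that the two operations $O_1 := \nabla^* \circ p^{-1}\Po_{ij}$ and $O_2 := p^{-1}\Po_i \circ p^{-1}\Po_j$ on $p^{-1}\E(BG) \cong Cl_n(G,C_0)^{\Aut(\qz)}$ agree on the image of $\E(BG)$. Now $O_1$ and $O_2$ are multiplicative and unital, being composites of the multiplicative unital transformations $\Po^{\phi}$ with the ring homomorphism $\nabla^*$, and every element of $p^{-1}\E(BG) = \E(BG)[1/p]$ is a product of an element in the image of $\E(BG)$ with a power of $1/p$; so, by multiplicativity, it remains only to check $O_1(1/p) = O_2(1/p)$. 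This is a short computation with the explicit formula for $\Po^{\phi}$: evaluating on a conjugacy class in the target written via \myref{iteratedwreath} as $\oplus_{i,j}(T_{i,j},[\cdots])$, both $O_1(1/p)$ and $O_2(1/p)$ give $1/p^{N}$ with $N$ the number of pairs $(i,j)$, since each ring map $\phi^*_H$ fixes $1/p \in C_0$. Hence $O_1 = O_2$ on all of $p^{-1}\E(BG)$, and $p^{-1}\Po$ satisfies every defining relation of a global power functor.

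I expect the main obstacle to be conceptual rather than computational: isolating why the iterated-wreath relation survives the passage to fixed points even when $\phi$ fails to be a power section. The reason --- which can also be packaged into an intrinsic argument avoiding $E$-theory --- is that rerunning the computation in the proof of \myref{powersecthm} for a general section replaces the power-section identity $\phi_{T_{i,j}} = \phi_{T_{i,j}/H_i}\phi_{H_i}$ by an equality only up to a unique automorphism of $\qz$, the two composites having the same kernel $T_{i,j}$; and, just as in the proof of \myref{rationalized} and the second proof of \myref{independent} (cf.\ \eqref{eq:gammatrafo}), such an automorphism acts trivially on an $\Aut(\qz)$-invariant class function, so the two evaluations coincide on $Cl_n(G,C_0)^{\Aut(\qz)}$.
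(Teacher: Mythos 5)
Your proof is correct, but it takes a genuinely different route from the paper's. The paper first reduces to abelian groups via \myref{embedding}, then inducts on $k$ using the injection of $p^{-1}\E(BA\wr\Sigma_{p^k})$ into the product of the quotient by the transfer ideal $J$ and the restriction to $\Sigma_{p^{k-1}}^{p}$; the crux there is the rectangle involving the ideal $L$, which is handled by observing that both composites are ring maps, that the unrationalized rectangle in $E$-theory commutes, and that $\E(BA)$ is a free $\E$-module whose generators generate $p^{-1}\E(BA)$, so that two $p^{-1}\E$-module maps inducing the same algebra structure and agreeing on generators must coincide. You instead work with an arbitrary finite group $G$ directly and exploit multiplicativity rather than $\E$-linearity: the two composites $O_1=\nabla^*\circ p^{-1}\Po_{ij}$ and $O_2=p^{-1}\Po_i\circ p^{-1}\Po_j$ are multiplicative, agree on the image of $\E(BG)$ by \myref{rationalized} together with \myref{porelations}(1), and agree on $1/p$ by the explicit count of summands via \myref{iteratedwreath}; since every element of the localization is of the form $x\cdot(1/p)^k$ with $x$ in the image, this suffices. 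Your argument is shorter and dispenses with the abelian reduction, the induction on $k$, and the transfer-ideal bookkeeping, while the paper's argument stays closer to the module-theoretic structure it has already set up (freeness of $\E(BA)$ and the identification of algebra structures). Both ultimately rest on the same two inputs, Theorem B and the $E$-theoretic wreath relation. Your closing sketch of an intrinsic argument --- the failure of the power-section identity is measured by an automorphism of $\qz$, which acts trivially on $\Aut(\qz)$-invariant class functions --- is precisely the ``completely computational proof'' that the paper's subsequent remark leaves as an exercise. One small point worth making explicit in your treatment of the routine relations: on the fixed points $\Po_1^{\phi}$ becomes the identity (since $\phi_e\in\Aut(\qz)$ acts trivially on invariants), which is what turns relation (3) of Section 6 into the correct diagonal axiom for a global power functor.
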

\begin{proof}
We must show that the diagram
\[
\xymatrix{p^{-1}\E(BG) \ar[r]^-{\Po_{m}^{\Q}} \ar[d]_{\Po_{s}^{\Q}} & p^{-1}\E(BG \wr \Sigma_{m}) \ar[d]^{\nabla} \\ p^{-1}\E(BG \wr \Sigma_{s}) \ar[r]_-{\Po_{t}^{\Q}} & p^{-1}\E(BG \wr \Sigma_{s} \wr \Sigma_{t})}  
\]
commutes, where $st = m$ and $\nabla$ is induced by the natural inclusion.

This is a computation that follows the lines of the proof of \cref{powersecthm}. In \cref{powersecthm} we work with a power section and here we work with an $\Aut(\qz)$-invariant class functions. We will use the notation of \cref{powersecthm}. 

Choose a section $\phi \in \Gamma(\Sub(\qz), \Isog(\qz))$. Since $\phi_{T_{i,j}}$ and $\phi_{T_{i,j}/H_i} \phi_{H_i}$ have the same kernel, there exists an automorphism $\sigma \in \Aut(\qz)$ such that 
\[
\phi_{T_{i,j}} = \sigma \phi_{T_{i,j}/H_i} \phi_{H_i}.
\]
Diagram \ref{bigdiagram} implies that 
\[
\psi_{T_{i,j}}^{*} = \psi_{H_i}^*|_{\lat_{\psi_{H_i}(K_{i,j})}} \psi_{T_{i,j}/H_i}^* \sigma^*.
\]

Now assume that $f \in Cl_n(G,C_0)^{\Aut(\qz)}$. The global power structure of $\phi$ in the proof of \cref{powersecthm} is used in going from line 7 to line 8 in the sequence of equalities. Up to line 7 we have the same sequence of equalities that give
\[
\Po^{\phi}_{t}(\Po^{\phi}_{s}(f))(\bigoplus_i(H_i, \bigoplus_j (K_{i,j},[(\al_{H_i})_{K_{i,j}}]))) = \Prod{i}\Prod{j}\phi_{H_i}^*\phi_{T_{i,j}/H_i}^*f([(\al_{H_i})_{K_{i,j}} \psi_{H_i}^*|_{\lat_{\psi_{H_i}(K_{i,j})}}\psi_{T_{i,j}/H_i}^*]).
\]
Now we use the fact that $f$ is $\Aut(\qz)$-invariant to get
\begin{align*}
\Prod{i}\Prod{j}\phi_{H_i}^*\phi_{T_{i,j}/H_i}^*f&([(\al_{H_i})_{K_{i,j}} \psi_{H_i}^*|_{\lat_{\psi_{H_i}(K_{i,j})}}\psi_{T_{i,j}/H_i}^*]) \\
&=\Prod{i}\Prod{j}\phi_{H_i}^*\phi_{T_{i,j}/H_i}^*\sigma^*f([(\al_{H_i})_{K_{i,j}} \psi_{H_i}^*|_{\lat_{\psi_{H_i}(K_{i,j})}}\psi_{T_{i,j}/H_i}^*\sigma^*])\\
&=\Prod{i}\Prod{j}\phi_{T_{i,j}}^*f([(\al_{H_i})_{K_{i,j}} \psi_{T_{i,j}}^*]) \\
&=\Po^{\phi}_{m}(f)(\bigoplus_{i,j}(T_{i,j},[(\al_{H_i})_{K_{i,j}}])). \\
%&= \Prod{i}\Prod{j}(f^{\phi_{K_j}})^{\phi_{H_i}}([(\al_{H_i})_{K_{i,j}}]). \\
\end{align*}

\end{proof}

\begin{comment}
\begin{remark}
It is easy to think that the proof above could be used to give a proof that $\Po_{m}^{\phi}$ has a total power functor structure for all $\phi$ (contradicting \Cref{powersecthm}). This is not the case because ring maps out of $C_0$ are \emph{not} determined by ring maps out of $\E$. 
\end{remark}
\end{comment}

\begin{comment}
\begin{remark}
Just as in \Cref{independent}, it is also possible to give a completely computational proof of this theorem that does not rely on the existence of Morava $E$-theory. We leave this as an exercise to the reader. There is a sense in which the above proof uses something too strong, the existence of a global power functor structure on $\E$. These computational proofs move in a more natural direction; they produce rich structures on subrings of class functions.
\end{remark}
\end{comment}

In particular, this generalizes a result of Ganter~\cite[Proposition 4.12]{globalgreen} to arbitrary heights and answers a question that she poses after the proof of Proposition 4.12 in \cite{globalgreen}. 

\begin{cor}\label{charglobalpower}
The character map 
\[\chi\colon \E(B-) \lra{} p^{-1}\E(B-) \cong Cl_n(-,C_0)^{\Aut(\qz)}\]
is a map of global power functors. 
\end{cor}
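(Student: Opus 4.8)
The plan is to deduce the corollary by assembling results already in hand, observing that the character map $\chi\colon \E(B-) \lra{} p^{-1}\E(B-)$ is precisely the localization map inverting $p$ (the Hopkins--Kuhn--Ravenel character map $\E(B-) \lra{} Cl_n(-,C_0)$ is recovered by postcomposing with the fixed-point inclusion $p^{-1}\E(BG) \cong Cl_n(G,C_0)^{\Aut(\qz)} \hookrightarrow Cl_n(G,C_0)$). Since \myref{ratpoglobalpower} shows that $p^{-1}\E(B-)$, equipped with the rationalized total power operation $p^{-1}\Po_m$, is a global power functor, it suffices to verify that $\chi$ is a morphism of global power functors in the sense of \cite{globalgreen}, i.e., a map of global Green functors which additionally commutes with the power operations.

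First I would check that $\chi$ is a map of global Green functors. As a natural transformation of commutative rings it automatically respects units, products, and the restriction maps. Compatibility with transfers is Theorem D of \cite{hkr}, which states exactly that the character map intertwines the $E$-theoretic transfer with the transfer on class functions; restricting to $\Aut(\qz)$-fixed points and using $p^{-1}\E(BG) \cong Cl_n(G,C_0)^{\Aut(\qz)}$ yields the assertion that $\chi\colon \E(BH) \lra{} p^{-1}\E(BH)$ commutes with induction along a subgroup $H \subset G$.

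The only substantive step is compatibility with the power operations, and this is precisely \myref{rationalized} (Theorem B): for every finite group $G$ and every $m \geq 0$ the square
\[
\xymatrix{\E(BG) \ar[r]^-{\Po_m} \ar[d]_-{\chi} & \E(BG \wr \Sigma_m) \ar[d]^-{\chi} \\ p^{-1}\E(BG) \ar[r]_-{p^{-1}\Po_m} & p^{-1}\E(BG \wr \Sigma_m)}
\]
commutes. Together with the Green-functor compatibility just recorded and \myref{ratpoglobalpower}, this shows that $\chi$ is a map of global power functors, completing the proof.

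I do not anticipate a genuine obstacle, since all the analytic content is already packaged into Sections~\ref{sec:proofgeneral} and~\ref{sec:maintheorem}; the only care required is bookkeeping against the precise list of axioms for a morphism of global power functors in \cite{globalgreen}, verifying that these are exhausted by ``map of global Green functors'' together with compatibility with $\Po_m$ for all $m \geq 0$ --- and, should one wish, reducing the latter to the prime-power cases by way of \myref{padicsum} and \myref{porelations}.
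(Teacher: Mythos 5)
Your proposal is correct and matches the paper's intent: the corollary is stated without a separate proof precisely because it follows immediately from \myref{rationalized} (compatibility of $\chi$ with the power operations), \myref{ratpoglobalpower} (the global power structure on the target), and Theorem D of \cite{hkr} (compatibility with transfers), which is exactly the assembly you carry out.
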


\begin{example}
We give an example of how these theorems may be used. \Cref{ahs} describes Ando, Hopkins, and Strickland's algebro-geometric interpretation of the additive power operation
\[
P_{p^k}/I\colon \E \lra{} \E(B\Sigma_{p^k})/I.
\]
Let 
\[
L \subset \E(B\Sigma_{p^k}\wr \Sigma_{p^h}) 
\]
be the ideal generated by the image of the transfer along the maps
\[
(\Sigma_i \times \Sigma_j) \wr \Sigma_{p^h} \lra{} \Sigma_{p^k}\wr \Sigma_{p^h},
\]
where $i,j>0$ and $i+j = p^k$, as well as the maps
\[
\Sigma_{p^k} \wr (\Sigma_i \times \Sigma_j)  \lra{} \Sigma_{p^k}\wr \Sigma_{p^h},
\]
where $i,j>0$ and $i+j = p^h$. It is a folklore result of Rezk (now proved by Nelson in \cite{nelson}) that 
\[
\E(B\Sigma_{p^k}\wr \Sigma_{p^h})/L
\]
is finitely generated and free as an $\E$-module and corepresents the scheme $\Sub_{p^k,p^h}(\G)$ of flags of subgroup schemes $H_0 \subseteq H_1 \subset \G$, where $|H_0| = p^k$ and $|H_1/H_0| = p^h$.

By considering the power operation $\Po_{p^h}$ applied to the transfer along $\Sigma_i \times \Sigma_j \subset \Sigma_{p^k}$ for $i,j>0$ and $i+j = p^k$, there is a commutative ring map
\[
\E(B\Sigma_{p^k})/I \lra{P_{p^h}/L} \E(B\Sigma_{p^k}\wr \Sigma_{p^h})/L.
\]
It is also natural to consider the composite
\[
\E \lra{P_{p^k}/I} \E(B\Sigma_{p^k})/I \lra{P_{p^h}/L} \E(B\Sigma_{p^k}\wr \Sigma_{p^h})/L.
\]
Both $P_{p^h}/L$ and $P_{p^h}/L \circ P_{p^k}/I$ can be understood algebro-geometrically by using \cref{clmainthm}.
\begin{comment}
It is possible to use \cref{clmainthm} to give algebro-geometric interpretations of the commutative ring maps
\[
\E(B\Sigma_{p^k})/I \lra{P_{p^h}/L} \E(B\Sigma_{p^k}\wr \Sigma_{p^h})/L
\]
and the composite
\[
\E \lra{P_{p^k}/I} \E(B\Sigma_{p^k})/I \lra{P_{p^h}/I} \E(B\Sigma_{p^k}\wr \Sigma_{p^h})/L.
\]
\end{comment}

There is a natural map of formal schemes from flags of subgroups to subgroups
\[
Z \colon \Sub_{p^k,p^h}(\G) \rightarrow \Sub_{p^h}(\G)
\]
given by
\[
(H_0 \subseteq H_1 \subset \G) \mapsto (H_1/H_0 \subset \G/H_0)
\]
and a map
\[
z \colon \Sub_{p^k,p^h}(\G) \rightarrow \Sub_{p^h}(\G) \rightarrow \Spf(\E) 
\]
given by
\[
(H_0 \subseteq H_1 \subset \G) \mapsto \G/H_1.
\]
\cref{clmainthm} gives a way to see that the algebro-geometric maps and the power operations agree. %For instance, for the composite $P_{p^h}/L \circ P_{p^k}/I$, both $P_{p^h}/L \circ P_{p^k}/I$ and the ring of functions on $z$ inject into class functions. 
Using \Cref{ratpoglobalpower}, it is easy to check that both maps make the following diagram commute
\[
\xymatrix{\E \ar@<0.5ex>[r]^-{P_{p^h}/L \circ P_{p^k}/I} \ar@<-0.5ex>[r]_-{z^*} \ar[d] & \E(B\Sigma_{p^k}\wr \Sigma_{p^h})/L \ar[d] \\ C_0 \ar[r] & Cl_n(B\Sigma_{p^k}\wr \Sigma_{p^h},C_0)/L.} 
\]
Since the vertical maps are injective, this implies that both $P_{p^h}/L \circ P_{p^k}/I$ and $z^*$ are the same map.
\end{example}

%%%%%%%%%%%%%%%%%%%%   End of main body of article
%
%                             References
%
%   BiBTeX users uncomment the following line:
%
%\bibliographystyle{gtart}
\bibliographystyle{amsalpha}
\bibliography{mybib}

%\begin{thebibliography}

%\end{thebibliography}

\end{document}